\documentclass[a4paper,UKenglish,cleveref, autoref]{lipics-v2019}
%This is a template for producing LIPIcs articles.
%See lipics-manual.pdf for further information.
%for A4 paper format use option "a4paper", for US-letter use option "letterpaper"
%for british hyphenation rules use option "UKenglish", for american hyphenation rules use option "USenglish"
%for section-numbered lemmas etc., use "numberwithinsect"
%for enabling cleveref support, use "cleveref"
%for enabling cleveref support, use "autoref"

%\graphicspath{{./graphics/}}%helpful if your graphic files are in another directory

\bibliographystyle{plainurl}% the mandatory bibstyle

\title{\Buchi\ Objectives in Countable MDPs}
%\titlerunning{Dummy short title}%optional, please use if title is longer than one line
\author{Stefan Kiefer}{University of Oxford, UK}{}{}{Supported by a Royal Society University Research Fellowship.}
\author{Richard Mayr}{University of Edinburgh, UK}{}{}{Supported by EPSRC grant EP/M027651/1.}
\author{Mahsa Shirmohammadi}{CNRS \& IRIF, FR}{}{}{Supported by PEPS JCJC grant AAPS.}
\author{Patrick Totzke}{University of Liverpool, UK}{}{}{}%{https://orcid.org/0000-0001-5274-8190}{}
%mandatory, please use full name; only 1 author per \author macro; first two parameters are mandatory, other parameters can be empty.
%\author{Corto Mascle}{ENS Cachan, France}{}{}{}%mandatory, please use full name; only 1 author per \author macro; first two parameters are mandatory, other parameters can be empty.
%\authorrunning{S. Kiefer et al.}%mandatory. First: Use abbreviated first/middle names. Second (only in severe cases): Use first author plus 'et. al.'
\authorrunning{S.~Kiefer, R.~Mayr, M.~Shirmohammadi, P.~Totzke}
\Copyright{Stefan Kiefer, Richard Mayr, Mahsa Shirmohammadi, Patrick Totzke}
%mandatory, please use full first names. LIPIcs license is "CC-BY";  http://creativecommons.org/licenses/by/3.0/
\ccsdesc[200]{Theory of computation~Random walks and Markov chains}
\ccsdesc[100]{Mathematics of computing~Probability and statistics}
%\ccsdesc[100]{Mathematics of computing}%TODO mandatory: Please choose ACM 2012 classifications from https://dl.acm.org/ccs/ccs_flat.cfm

%\category{}%optional, e.g. invited paper

\relatedversion{}%optional, e.g. full version hosted on arXiv, HAL, or other respository/website
%\relatedversion{A full version of the paper is available at \url{...}.}

\supplement{}%optional, e.g. related research data, source code, ... hosted on a repository like zenodo, figshare, GitHub, ...
\keywords{Markov decision processes}%mandatory
%\category{}%optional, e.g. invited paper

%\ConcurArxiv{\relatedversion{\url{https://arxiv.org/abs/1806.06143}}}{}%optional, e.g. full version hosted on arXiv, HAL, or other respository/website

\supplement{}%optional, e.g. related research data, source code, ... hosted on a repository like zenodo, figshare, GitHub, ...

\funding{}%optional, to capture a funding statement, which applies to all authors. Please enter author specific funding statements as fifth argument of the \author macro.

\acknowledgements{The authors thank anonymous reviewers for their helpful comments.}%optional

\nolinenumbers %uncomment to disable line numbering

\hideLIPIcs  %uncomment to remove references to LIPIcs series (logo, DOI, ...), e.g. when preparing a pre-final version to be uploaded to arXiv or another public repository

%Editor-only macros:: begin (do not touch as author)%%%%%%%%%%%%%%%%%%%%%%%%%%%%%%%%%%
\EventEditors{Christel Baier, Ioannis Chatzigiannakis, Paola Flocchini, and Stefano Leonardi}
\EventNoEds{4}
\EventLongTitle{46th International Colloquium on Automata, Languages, and Programming (ICALP 2019)}
\EventShortTitle{ICALP 2019}
\EventAcronym{ICALP}
\EventYear{2019}
\EventDate{July 9--12, 2019}
\EventLocation{Patras, Greece}
\EventLogo{eatcs}
\SeriesVolume{132}
\ArticleNo{115}
%%%%%%%%%%%%%%%%%%%%%%%%%%%%%%%%%%%%%%%%%%%%%%%%%%%%%%

%%%%%%%%%%%%%%% LOCAL PREAMBLE %%%%%%%%%%%%%%%%%%%%%%%
\usepackage{amsmath,amsthm}
\usepackage{pdfrender}
\usepackage{epsfig,psfrag}
\usepackage{graphicx}
\usepackage{graphics}
\usepackage{latexsym}
\usepackage{amssymb}
\usepackage{amsmath}
\usepackage{stmaryrd}
\usepackage{ifthen}
\usepackage{array}
\usepackage{mathrsfs}
\usepackage{paralist}
\usepackage{multirow}

\usepackage{pgf}
\usepackage{tikz}
\usetikzlibrary{arrows,automata,backgrounds,decorations}
\usetikzlibrary{shapes.multipart,shapes.misc}
\usepgflibrary{decorations.pathreplacing}
\usetikzlibrary{arrows,calc,topaths}
\usetikzlibrary{automata,shapes.arrows, backgrounds, fadings,intersections, positioning}
\usetikzlibrary{shadows}
\tikzstyle{small}=[font=\footnotesize]
\tikzset{
    every picture/.style={>=stealth,auto,node distance=2cm},
}
\usepackage{pgfplots}
\usetikzlibrary{intersections, pgfplots.fillbetween}
\usepackage{tkz-euclide}
\usetkzobj{all}
\usetikzlibrary{shapes.misc}
% \usetikzlibrary{automata}
\usetikzlibrary{petri}
\usetikzlibrary{shadows}
% \usetikzlibrary{positioning}
\usetikzlibrary{decorations.pathmorphing}
\usetikzlibrary{decorations.shapes}
\usetikzlibrary{shapes}
\usetikzlibrary{backgrounds}
\usetikzlibrary{fit}
\usetikzlibrary{arrows}
\usetikzlibrary{calc}
\usetikzlibrary{mindmap}
\usetikzlibrary{matrix}

 \usepackage{todonotes}

\usepackage{mathtools}
\usepackage{thmtools,thm-restate}
\usepackage{xcolor}

\usepackage{bbding}

\usepackage{hyperref} % ,cleveref}
\hypersetup{colorlinks,linkcolor=blue,citecolor=blue,urlcolor=blue}
%\renewcommand{\equationautorefname}{}
%\creflabelformat{equation}{\textup{(#1)}}
\crefname{equation}{}{}
\tikzset{every picture/.style={thick,>=angle 60}}
\tikzset{MDPrand/.style={draw,circle,minimum size=11*1.5,inner sep=0}}
\tikzset{MDPcont/.style={draw,rectangle,minimum size=9*1.5,inner sep=0,fill=yellow}}
\tikzset{MDPblue/.style={fill=blue!30}}
\tikzset{MDPgreen/.style={accepting,fill=green}}
\tikzset{MDPbrown/.style={fill=brown}}
\tikzset{MDPred/.style={fill=red!50}}
\tikzset{MDPddd/.style={MDPcont,draw=none,text height=2.5ex,text depth=.25ex,inner sep=2}}

%\tikzset{font=\footnotesize}

\newcommand{\block}{
\node[MDPrand,MDPbrown] (1) at (-1,0) {B};
\node[MDPrand,MDPbrown] (2) at (0,0) {B};
\node[MDPrand] (3) at (1,0) {W};
\node[MDPcont] (4) at (-1.0,-1) {Y};
\node[MDPcont] (5) at (0.5,-1) {Y};
\draw[->] (1) edge node[pos=0.37,'] {$0.3$} (2);
\draw[->] (1) edge node[pos=0.3,inner sep=0.5] {$0.7$} (4);
\draw[->] (2) edge node[pos=0.37,'] {$0.3$} (3);
\draw[->] (2) edge node[pos=0.4,inner sep=0] {$0.7$} (5);
\draw[->] (4) edge                         (2);
\draw[->] (5) edge                         (3);
}

\newcommand{\gblock}[1]{
\def\g{#1}
\node[MDPrand,draw=none] (\g1) at (-1,0) {};
\node[MDPrand,draw=none] (\g2) at (0,0) {};
\node[MDPrand,draw=none] (\g3) at (1,0) {};
\node[MDPcont,draw=none] (\g4) at (-1.0,-1) {};
\node[MDPcont,draw=none] (\g5) at (0.5,-1) {};
}

\newcommand{\redblock}[2]{
\node[MDPrand,MDPred] (0) at (#1-1,-2) {R};
\draw[->] (0) edge[bend left =16] node[pos=0.45,inner sep=-0.5pt] {$\frac{#2^2}{#2^2+1}$} (1);
\draw[->] (0) edge[red,bend right=16] node[pos=0.45,',red,inner sep=-0.5pt] {$\frac{1}{#2^2+1}$} (1);
}

\newcommand{\Tzero}[2]{
\begin{scope}[shift={(#1,-3)}]
\node[MDPrand,MDPgreen] (1) at (-0/2,0) {G};
\end{scope}
}

\newcommand{\Tone}[2]{
\begin{scope}[shift={(#1,-3)}]
\gblock{1}
\Tzero{-1}{#2}
\redblock{-0/2}{#2}
\draw[->] (14) edge (0);
\draw[->,rounded corners=3mm] (1) -- +(0.75,0) -- (12);
\Tzero{0.5}{#2}
\redblock{+3/2}{#2}
\draw[->] (15) edge (0);
\draw[->,rounded corners=3mm] (1) -- +(0.80,0) -- (13);
\block{1}
\end{scope}
}

\newcommand{\Ttwo}[2]{
\begin{scope}[shift={(#1,-3)}]
\gblock{2}
\Tone{-1.5}{#2}
\redblock{-1.5}{#2}
\draw[->] (24) edge (0);
\draw[->] (3) edge[bend right=0] (22);
\Tone{1.5}{#2}
\redblock{1.5}{#2}
\draw[->] (25) edge (0);
\draw[->] (3) -- (23);
\block
\end{scope}
}

\newcommand{\Tthree}[2]{
\begin{scope}[shift={(#1,-3)}]
\gblock{3}
\Ttwo{-3}{#2}
\redblock{-3}{#2}
\draw[->] (34) edge (0);
\draw[->] (3) edge[bend right=20] (32);
\Ttwo{+3}{#2}
\redblock{+3}{#2}
\draw[->] (35) edge (0);
\draw[->] (3) -- (33);
\block
\end{scope}
}

\newcommand{\onebit}{
\node[MDPrand,MDPblue] (r) at (-2,-3+0) {L};
\draw[->] (-2.7,-3+0) -- (r);
\Tone{0}{1}
\draw[->] (r) edge[bend right=20] node['] {$\frac11$} (1);
\draw[->] (r) edge[bend left =20] node {$1 - \frac11$} (3);
\node[MDPrand,MDPblue] (r) at (3) {L};
\Ttwo{4.5}{2}
\draw[->] (r) edge[bend right=20] node['] {$\frac12$} (1);
\draw[->] (r) edge[bend left =20] node {$1 - \frac12$} (3);
\node[MDPrand,MDPblue] (r) at (3) {L};
\Tthree{13.5}{3}
\draw[->] (r) edge[bend right=20] node['] {$\frac13$} (1);
\draw[->] (r) edge[bend left =20] node {$1 - \frac13$} (3);
\node[MDPrand,MDPblue] (r) at (3) {L};

\coordinate (r1) at (18.2,-3+0.9);
\draw[->] (r) edge[bend left =5] node[pos=0.8] {$1 - \frac14$} (r1);
\node[circle,fill,minimum size=3pt] at (18.5,-3+0.9) {};
\node[circle,fill,minimum size=3pt] at (18.8,-3+0.9) {};
\node[circle,fill,minimum size=3pt] at (19.1,-3+0.9) {};
\coordinate (r1) at (18.2,-3-0.9);
\draw[->] (r) edge[bend right=5] node[pos=0.7,'] {$\frac14$} (r1);
\node[circle,fill,minimum size=3pt] at (18.5,-3-0.9) {};
\node[circle,fill,minimum size=3pt] at (18.8,-3-0.9) {};
\node[circle,fill,minimum size=3pt] at (19.1,-3-0.9) {};
}

% Theorem Environments <<<
% Because thmtools is on top of amsthm, it's better to avoid lipics's
% definitions which use amsthm directly.
\declaretheorem[name=Theorem,style=plain]{ourtheorem}

\declaretheorem[name=Lemma,Refname={Lemma,Lemmas},sibling=ourtheorem]{ourlemma}

\declaretheorem[name=Example,style=definition,qed=\qedsymbol,sibling=ourtheorem]{ourexample}
% >>> theorem environments

% short font modifiers
\newcommand{\+}[1]{\mathbb{#1}}

% Sets
\newcommand{\N}{\+{N}}
\newcommand{\R}{\+{R}}
\newcommand{\x}{\times}

% MDPs
\usepackage{wasysym} % for "small" \ocircle symbol
\newcommand{\rsymbol}{\ocircle}
\newcommand{\zsymbol}{\Box}
\newcommand{\zstates}{\states_\zsymbol}
\newcommand{\rstates}{\states_\rsymbol}
\newcommand{\reachset}{T}

% above are all in use!

% Cardinal, length, norms, ...

% Definitions
\newcommand{\eqby}[2][=]{\stackrel{\text{{\tiny{#2}}}}{#1}}
\newcommand{\eqdef}{\eqby{def}}

\newcommand{\defeq}{\stackrel{\text{def}}{=}}

%vectors

% steps, effects etc
\newcommand{\eps}{\varepsilon}
\newcommand{\step}[2][]{\xrightarrow[#1]{#2}}
%\newcommand\step[2][]{
%    \xrightarrow[{\raisebox{1.25ex-\heightof{$\scriptstyle#1$}}[0pt]{$\scriptstyle#1$}}]{#2}%
%}

% complexity classes

% Todo notes

% contains definition environments etc.
\newcommand{\problemx}[3]{
\par\noindent\underline{\sc#1}\par\nobreak\vskip.2\baselineskip
\begingroup\clubpenalty10000\widowpenalty10000
\setbox0\hbox{\bf INPUT:\ }\setbox1\hbox{\bf QUESTION:\ }
\dimen0=\wd0\ifnum\wd1>\dimen0\dimen0=\wd1\fi
\vskip-\parskip\noindent
\hbox to\dimen0{\box0\hfil}\hangindent\dimen0\hangafter1\ignorespaces#2\par
\vskip-\parskip\noindent
\hbox to\dimen0{\box1\hfil}\hangindent\dimen0\hangafter1\ignorespaces#3\par
\endgroup}

\newcommand{\dist}{\mathcal{D}}

\newcommand{\Parity}[1]{\mathtt{Parity}(#1)}

% MC

% theorems
% \newtheorem{theorem}{Theorem}
% \newtheorem{lemma}[theorem]{Lemma}
% \newtheorem{corollary}[theorem]{Corollary}
% \newtheorem{proposition}[theorem]{Proposition}
% \newtheorem{definition}{Definition}
% \newtheorem{remark}{Remark}
% \newtheorem{fact}{Fact}
% \newtheorem{property}{Property}
% \newtheorem{example}{Example}

%%%%%%%%%%%%%%%%%%%%%%%%%%%%%%%%%%%%%%%%%%%%%%%%%%%%%%%%%%
%% Quoted Theorems/Lemmata/etc.

%

%

%

% mahsa

% was: comm.tex
\newcommand{\hide}[1]{}

\newcommand{\lrc}[1]{(#1)}

\newcommand{\ignore}[1]{}

\newcommand{\tuple}[1]{\lrc{#1}}

\newcommand{\mdp}{{\mathcal M}}

\newcommand{\mdptuple}{\tuple{\states,\zstates,\rstates,\transition,\probp}}

\newcommand{\initstates}{I}
\newcommand{\states}{S}

\newcommand{\state}{s}

\newcommand{\transition}{{\longrightarrow}}

\newcommand{\probp}{P}

\newcommand{\smallparg}[1]{{\bf #1}}

\newcommand{\complementof}[1]{\overline{#1}}

\newcommand{\play}{\rho}

\newcommand{\partialplay}{\rho}

\newcommand{\zstrat}{\sigma}

% \newcommand{\ystrat}{\strat^{1-x}}
% \newcommand{\xforcestrat}{{\it force}^x}
% \newcommand{\yforcestrat}{{\it force}^{1-x}}
% \newcommand{\xavoidstrat}{{\it avoid}^x}
% \newcommand{\yavoidstrat}{{\it avoid}^{1-x}}

% types of strategies
% \newcommand{\stratset}{F}
% \newcommand{\xallstrats}[2]{\stratset^{{#1}}_{{#2}}}

\newcommand{\zstratset}{\Sigma}

\newcommand{\memory}{{\sf M}}
\newcommand{\memconf}{{\sf m}}

\newcommand{\expectval}{{\mathcal E}}
\newcommand{\probm}{{\mathcal P}}

\newcommand{\formula}{{\varphi}}

\newcommand{\valueof}[2]{{\mathtt{val}_{#1}(#2)}}
\newcommand{\optval}{v^*}

\mathchardef\mhyphen="2D % define a "math hyphen"

\newcommand{\bubble}[2]{{\sf bubble}_{#1}(#2)}
\newcommand{\firstinset}[1]{{\sf firstin}(#1)}

\newcommand{\M}{\mathcal{M}}

\newcommand{\Buchi}{{B\"uchi}}
\newcommand{\reset}{\mbox{\upshape\texttt{B\"uchi}}}
\newcommand{\setf}{\mbox{$\mathtt{Goal}$}}
\newcommand{\setfb}[2]{\setf^{{#1}}(#2)}
%%%%%%%%%%%%%%%%%%%%%%%%%%%%%%%%%%%%%%%%%%%%%%%%%%%%%%
\begin{document}
\maketitle

\begin{abstract}
\noindent
We study countably infinite Markov decision processes with \Buchi\
objectives, which ask to visit a given subset $F$ of states infinitely often.
A question left open by T.P.~Hill in 1979~\cite{Hill:79} is
whether there always exist $\eps$-optimal Markov strategies, i.e., strategies
that base decisions only on the current state and the number of steps taken so far.
We provide a negative answer to this question by constructing a non-trivial
counterexample. On the other hand, we show that Markov strategies with only
$1$ bit of extra memory are sufficient.
\end{abstract}

\newpage
\section{Introduction}\label{sec:intro}
{\bf\noindent Background.}
Markov decision processes (MDPs) are a standard model for dynamic systems that
exhibit both stochastic and controlled behavior \cite{Puterman:book}.
MDPs play a prominent role in numerous domains, including artificial intelligence and machine learning~\cite{sutton2018reinforcement,sigaud2013markov}, control theory~\cite{blondel2000survey,NIPS2004_2569}, operations research and finance~\cite{bauerle2011finance,schal2002markov}, and formal verification~\cite{ModCheckHB18,ModCheckPrinciples08}.
In an MDP, the system starts in the initial state and makes a sequence of transitions
between states.
Depending on the type of the current state, either the controller gets to
choose an enabled transition (or a distribution over transitions), or the next
transition is chosen randomly according to a defined distribution.
By fixing a strategy for the controller, one obtains a probability space
of runs of the MDP. The goal of the controller is to optimize the expected value of
some objective function on the runs.

The type of strategy needed for an optimal (resp.\ $\eps$-optimal)
strategy for some objective is also called the \emph{strategy complexity} of
the objective.
There are different types of strategies, depending on whether one can take
the whole history of the run into account (history-dependent; (H)),
or whether one is limited to a finite amount of memory (finite memory; (F))
or whether decisions are based only on the current state (memoryless; (M)).
Moreover, the strategy type depends on whether the controller can
randomize (R) or is limited to deterministic choices (D).
The simplest type MD refers to memoryless deterministic strategies.
\emph{Markov strategies} are strategies that base their decisions only on
the current state and the number of steps in the history or the run.
Thus they do use infinite memory, but only in a very restricted form
by maintaining an unbounded step-counter.
For finite MDPs, there exist optimal MD-strategies for many (but not all)
%of qualitative and quantitative
objectives \cite{CAH:QEST2004,chatterjee2012survey,Chatterjee:2004:QSP:982792.982808,Puterman:book},
but the picture is more complex for countably infinite
MDPs \cite{KMSW2017,Ornstein:AMS1969,Puterman:book}.

We study here so-called \emph{Goal} objectives
%Some of the simplest objectives are
defined via a subset of goal states $F$:
%that are called the \emph{goal}.
In the basic Goal objective
(also called the \emph{Reachability} objective) one simply wants to reach the set $F$.
In the \emph{\Buchi} objective one wants to visit the set $F$ infinitely often.
For finite MDPs there exist optimal MD-strategies for both these objectives \cite{chatterjee2012survey,Puterman:book}.
For countably infinite MDPs, optimal strategies (where they exist) and
$\eps$-optimal strategies for Reachability can be chosen
MD \cite{Ornstein:AMS1969,Puterman:book}.
Similarly, optimal strategies for \Buchi\ (where they exist) can be chosen MD
\cite{KMSW2017}. However, $\eps$-optimal strategies for
\Buchi\ require infinite memory (cannot be chosen FR); cf.\ \cite{KMSW2017,Krcal:Thesis:2009}.

 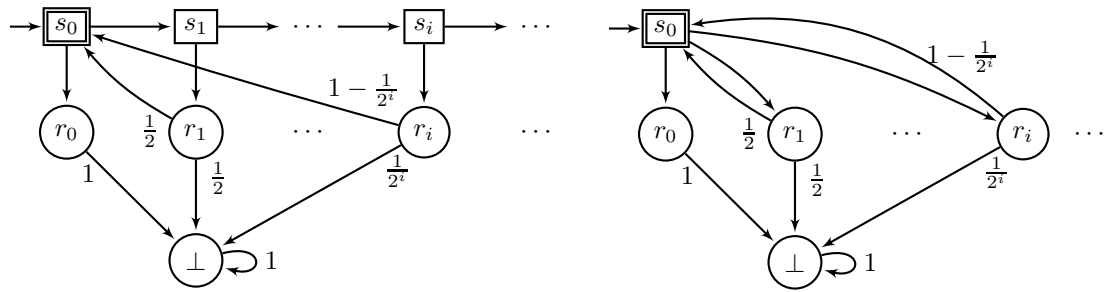
\begin{figure}[h]
    % \centering
    \begin{minipage}{0.45\textwidth}
    %\vspace{8mm}
        \begin{center}				

\centering
\begin{tikzpicture}[>=latex',shorten >=1pt,node distance=1.9cm,on grid,auto,
roundnode/.style={circle, draw,minimum size=1.5mm},
squarenode/.style={rectangle, draw,minimum size=2mm},
diamonddnode/.style={diamond, draw,minimum size=2mm}]

\node [squarenode,double,initial,initial text={}] (s0) at(0,0) [draw]{$s_0$};
\node [squarenode] (s1) [draw,right=1.7cm of s0]{$s_1$};
%\node [squarenode] (s2) [draw,right=1cm of s1]{$s_2$};
\node [squarenode] (s3) [draw=none,right=1.5cm of s1]{$\cdots$};
\node [squarenode] (s4) [draw,right=1.5cm of s3]{$s_i$};
\node [squarenode] (s5) [draw=none,right=1.5cm of s4]{$\cdots$};

\node[roundnode] (r0)  [below=1.4cm of s0] {$r_0$};
\node[roundnode] (r1)  [below=1.4cm of s1] {$r_1$};
%\node[roundnode] (r2)  [below=1.5cm of s2] {$r_2$};
\node[roundnode] (r3)  [draw=none,below=1.4 of s3] {$\cdots$};
\node[roundnode] (r4)  [below=1.4cm of s4] {$r_i$};
\node[roundnode] (r5)  [draw=none,below=1.4cm of s5] {$\cdots$};

\node [roundnode] (t)  [below=1.7cmof r1] {$\bot$};

\path[->] (s0) edge (s1);
\path[->] (s1) edge (s3);
%\path[->] (s2) edge (s3);
\path[->] (s3) edge (s4);
\path[->] (s4) edge (s5);

\path[->] (s0) edge (r0);
\path[->] (s1) edge (r1);
%\path[->] (s2) edge (r2);
\path[->] (s4) edge (r4);

\path[->] (r0) edge node [near start,left=.05cm] {$1$} (t);
\path[->] (r1) edge node [near start,right=.05cm] {$\frac{1}{2}$} (t);
%\path[->] (r2) edge node [near start,right=0cm] {\scriptsize{$\frac{1}{4}$}} (t);
\path[->] (r4) edge node [near start,right=.25cm] {$\frac{1}{2^{i}}$} (t);

\path[->] (r1) edge [bend left=10] node[pos=0.2,below] {$\frac{1}{2}$} (s0);
%\path[->] (r2) edge [bend left=5] node [pos=0.25,right=.25cm] {\scriptsize{$\frac{3}{4}$}} (s0);
\path[->] (r4) edge [bend left=1] node [pos=0.1,above] {$1-\frac{1}{2^{i}}$} (s0);

%\path[->] (t) edge [loop right]   ();	
\path[->] (t) edge [loop right] node[pos=0.5,right] {$1$}  ();	
\end{tikzpicture}
				\end{center}
					\textbf{a)}~Finitely branching, but infinitely many controlled states.
    \end{minipage}%
    \hspace{15mm}
    \begin{minipage}{0.45\textwidth}
        \begin{center}

\centering
\begin{tikzpicture}[>=latex',shorten >=1pt,node distance=1.9cm,on grid,auto,
roundnode/.style={circle, draw,minimum size=1.5mm},
squarenode/.style={rectangle, draw,minimum size=2mm},
diamonddnode/.style={diamond, draw,minimum size=2mm}]

\node [squarenode,double,initial,initial text={}] (s0) at(0,0) [draw]{$s_0$};
%\node [squarenode] (s1) [draw,right=1.7cm of s0]{$s_1$};
%\node [squarenode] (s2) [draw,right=1cm of s1]{$s_2$};
%\node [squarenode] (s3) [draw=none,right=1.5cm of s1]{$\cdots$};
%\node [squarenode] (s4) [draw,right=1.5cm of s3]{$s_i$};
%\node [squarenode] (s5) [draw=none,right=1.5cm of s4]{$\cdots$};

\node[roundnode] (r0)  [below=1.4cm of s0] {$r_0$};
\node[roundnode] (r1)  [below=1.4cm of s1] {$r_1$};
%\node[roundnode] (r2)  [below=1.5cm of s2] {$r_2$};
\node[roundnode] (r3)  [draw=none,below=1.4 of s3] {$\cdots$};
\node[roundnode] (r4)  [below=1.4cm of s4] {$r_i$};
\node[roundnode] (r5)  [draw=none,right=0.9cm of r4] {$\cdots$};

\node [roundnode] (t)  [below=1.7cmof r1] {$\bot$};

%\path[->] (s0) edge (s1);
%\path[->] (s1) edge (s3);
%\path[->] (s2) edge (s3);
%\path[->] (s3) edge (s4);
%\path[->] (s4) edge (s5);

\path[->] (s0) edge (r0);
\path[->] (s0) edge [bend left=10] node[pos=0.2,below] {} (r1);
%\path[->] (s2) edge (r2);
\path[->] (s0) edge [bend left=10] node[pos=0.2,below] {} (r4);

\path[->] (r0) edge node [near start,left=.05cm] {$1$} (t);
\path[->] (r1) edge node [near start,right=.05cm] {$\frac{1}{2}$} (t);
%\path[->] (r2) edge node [near start,right=0cm] {\scriptsize{$\frac{1}{4}$}} (t);
\path[->] (r4) edge node [near start,right=.25cm] {$\frac{1}{2^{i}}$} (t);

\path[->] (r1) edge [bend left=10] node[pos=0.2,below] {$\frac{1}{2}$} (s0);
%\path[->] (r2) edge [bend left=5] node [pos=0.25,right=.25cm] {\scriptsize{$\frac{3}{4}$}} (s0);
\path[->] (r4) edge [bend right=25] node [pos=0.3,right] {$1-\frac{1}{2^{i}}$} (s0);

\path[->] (t) edge [loop right] node[pos=0.5,right] {$1$}  ();	
\end{tikzpicture}
				\end{center}
				\textbf{b)}~Infinitely branching, but just one controlled state.
    \end{minipage}
\caption{Two MDPs where $\eps$-optimal strategies for
\Buchi\ require infinite memory. Let $F=\{s_0\}$ be the set of goal states.
Here and throughout the paper we indicate goal states by double borders, and controlled states as rectangles.
}\label{fig:infmem-Buchi}
\end{figure}

\begin{ourexample} \label{ex-infmem-Buchi}
Consider the MDPs in \autoref{fig:infmem-Buchi}. %, cf.\ \cite{Krcal:Thesis:2009}.
Every finite memory (FR) strategy will
only attain probability $0$ for \Buchi\ in these examples \cite{KMSW2017}.
However, there exists an $\eps$-optimal Markov strategy for every $\eps>0$:
At the $i$-th time that state $\state_0$ is visited, pick the
successor state $r_{i+k}$ where $k$ is some sufficiently large number
depending on $\eps$, e.g., $k=\lceil\log_2(1/\eps)\rceil$.
For example (b) this can easily be done with a step-counter since $s_0$ is
visited for the $i$-th time in step $2(i-1)$ unless the system has reached the
state $\bot$.
%Similarly,
For example (a), under this strategy, state $s_0$ is visited for the $i$-th time in step
$\sum_{j=1}^{i-1} (k+j+1)$ unless the system has reached the
state $\bot$.
\end{ourexample}

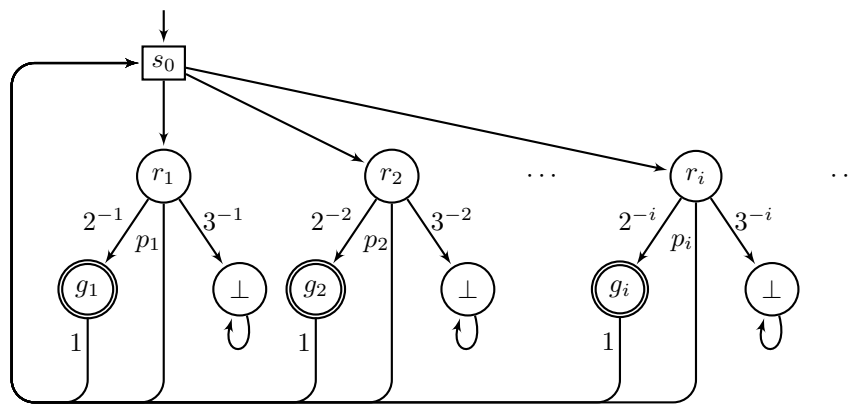
\begin{figure}[h]
\centering
\begin{tikzpicture}[>=latex',shorten >=1pt,node distance=1.9cm,on grid,auto,
roundnode/.style={circle, draw,minimum size=1.5mm},
squarenode/.style={rectangle, draw,minimum size=2mm},
diamonddnode/.style={diamond, draw,minimum size=2mm}]
\node [squarenode] (s0) at(0,0) {$s_0$};
\path[->,draw] (0,0.7) -- (s0);
\node [roundnode] (r1) at (0,-1.5) {$r_1$};
\node [roundnode] (r2) at (3,-1.5) {$r_2$};
\node [roundnode,draw=none] at (5,-1.5) (r3){$\cdots$};
\node [roundnode] (ri) at (7,-1.5) {$r_i$};
\node [roundnode,draw=none] at (9,-1.5) {$\cdots$};
\node [roundnode,double] (g1) at (-1,-3) {$g_1$};
\node [roundnode,double] (g2) at (2,-3) {$g_2$};
\node [roundnode,double] (gi) at (6,-3) {$g_i$};
\node [roundnode] (b1) at (1,-3) {$\bot$};
\node [roundnode] (b2) at (4,-3) {$\bot$};
\node [roundnode] (bi) at (8,-3) {$\bot$};
\path[->] (b1) edge [loop below]  ();
\path[->] (b2) edge [loop below]  ();
\path[->] (bi) edge [loop below]  ();
\path[->,draw] (s0) -- (r1);
\path[->,draw] (s0) -- (r2);
\path[->,draw] (s0) -- (ri);
\path[->] (r1) edge node[left,pos=0.3] {$2^{-1}$} (g1);
\path[->] (r2) edge node[left,pos=0.3] {$2^{-2}$} (g2);
\path[->] (ri) edge node[left,pos=0.3] {$2^{-i}$} (gi);
\path[->] (r1) edge node[right,pos=0.3] {$3^{-1}$} (b1);
\path[->] (r2) edge node[right,pos=0.3] {$3^{-2}$} (b2);
\path[->] (ri) edge node[right,pos=0.3] {$3^{-i}$} (bi);
\path[draw,->,rounded corners=3mm] (g1) -- node[inner sep=0,left=0.05,pos=0.3] {$1$} (-1,-4.5) -- (-2,-4.5) -- (-2,0) -- (s0); \path[draw,->,rounded corners=3mm] (g2) -- node[inner sep=0,left=0.05,pos=0.3] {$1$} (2,-4.5) -- (-2,-4.5) -- (-2,0) -- (s0); \path[draw,->,rounded corners=3mm] (gi) -- node[inner sep=0,left=0.05,pos=0.3] {$1$} (6,-4.5) -- (-2,-4.5) -- (-2,0) -- (s0); \path[draw,->,rounded corners=3mm] (r1) -- node[inner sep=0,left=0.02,pos=0.2] {$p_1$} (0,-4.5) -- (-2,-4.5) -- (-2,0) -- (s0); \path[draw,->,rounded corners=3mm] (r2) -- node[inner sep=0,left=0.02,pos=0.2] {$p_2$} (3,-4.5) -- (-2,-4.5) -- (-2,0) -- (s0);
\path[draw,->,rounded corners=3mm] (ri) -- node[inner sep=0,left=0.02,pos=0.2] {$p_i$} (7,-4.5) -- (-2,-4.5) -- (-2,0) -- (s0);
\end{tikzpicture}
\caption{An MDP where $\eps$-optimal strategies for \Buchi\ require infinite memory.
The transition probability $p_i$ stands for $1 - 2^{-i} - 3^{-i}$.
The state $s_0$ is the only controlled state. %, and $F=\{g_i \mid i \in \N\}$ is the set of goal states (indicated by double borders).
}
\label{fig:hill99}
\end{figure}

\begin{ourexample} \label{ex-hill99}
Consider the MDP from \autoref{fig:hill99}, taken from \cite[Example 4.2]{Hill:99}.
Every FR-strategy attains only probability~$0$ of \Buchi.
Moreover, the strategy that, in state~$s_0$, subsequently picks $r_1, r_2, \ldots$ also attains probability~$0$, unlike in \autoref{ex-infmem-Buchi}.
But a different infinite-memory strategy achieves a positive probability.
Indeed, let $\zstrat$ be the strategy that, in~$s_0$, picks $2^1$ times $r_1$ and then $2^2$ times $r_2$ and \ldots $2^i$ times $r_i$ etc.
This strategy~$\zstrat$ achieves a positive probability of \Buchi.
(In more detail, $\zstrat$ achieves a positive probability of not falling in a losing sink~$\bot$, and in almost all of the remaining runs it visits a goal state infinitely often.)
Note that $\zstrat$ is a Markov strategy.
%Since there are only finitely many controlled states (only the state~$s_0$), Hill's result~\cite{Hill:79} implies that if any strategy achieves a positive probability then also a Markov strategy does.
%Indeed, the strategy~$\zstrat$ is a Markov strategy.
\end{ourexample}

\medskip
{\bf\noindent The open problem.}
While the MDPs in Examples \ref{ex-infmem-Buchi} and~\ref{ex-hill99} require infinite memory, Markov strategies suffice for them.
Such examples led to the question whether there always exists a family of
$\eps$-optimal Markov strategies for \Buchi\ in all countably infinite
MDPs.

A partial answer was given by Hill \cite{Hill:79} (Proposition 5.1), who showed that
$\eps$-optimal Markov strategies for \Buchi\ exist in the special case where the MDP
contains only a \emph{finite} number of controlled states.
%(The number of random states and distributions could still be infinite.)
This result applies to the MDPs from \autoref{ex-hill99}
and \autoref{fig:infmem-Buchi}b),
but not directly to the one in \autoref{fig:infmem-Buchi}a).

The question for general MDPs was stated as an open problem in \cite{Hill:79}
(p.158, l.4) and mentioned again in \cite{Hill:99} (Q1 in Section 5).

\medskip
{\bf\noindent Our contributions.}
%\begin{enumerate}
%\item
We provide a negative answer to the open problem. We construct a non-trivial example
of a countable acyclic and finitely branching MDP and prove that no
$\eps$-optimal Markov strategies
for \Buchi\ exist for it (for any $\eps <1$).
In combination with the example from \cref{fig:infmem-Buchi}, this shows that for
general MDPs neither finite memory (FR) nor Markov strategies are sufficient.

%\item
Secondly, we provide an upper bound on the strategy complexity of \Buchi.
We show that for \emph{acyclic} countable MDPs there always exist $\eps$-optimal
strategies %for \Buchi\
that are deterministic and use only one bit of memory.
Since every MDP can be transformed into an acyclic one by encoding a step-counter into the
states, it follows that general countable MDPs have $\eps$-optimal strategies for \Buchi\ that are deterministic and use only a step-counter plus one extra bit of memory.
Thus Markov strategies are almost, but not quite, sufficient.
%\end{enumerate}
\cref{tab:results} summarizes these results.

{
\newcommand{\ok}{Y}
\newcommand{\newok}{\textbf{Y}}
\newcommand{\no}{N}
\newcommand{\newno}{\textbf{N}}

\begin{table*}[ht]
\centering
\begin{tabular}{|l|c|c|c|c|c|}
\hline
\mbox{$\eps$-optimal strategy for \Buchi}     & MD  & 1-bit D & FR  & Markov & Markov+1 bit D  \\ \hline
\mbox{Finite MDP}                     & \ok & \ok     & \ok & \ok    & \ok             \\  \hline
\mbox{MDP w.\ finitely many controlled states} & \no & \no     & \no & \ok    & \ok             \\  \hline
\mbox{\vspace*{2mm}Acyclic MDP}                    & \newno & \newok     & \newok & \newno    & \newok             \\  \hline
\mbox{General MDP}                    & \no & \no     & \no & \newno    & \newok             \\  \hline
\end{tabular}
\smallskip
\caption{Existence of various types of $\eps$-optimal strategies for the \Buchi\ objective,
for several classes of MDPs. New results are in boldface.}\label{tab:results}
\end{table*}
}

\section{Preliminaries}\label{sec:prelim}
% \state
% \states
% \supp

%Let $\R$ and $\N$ denote the set of real and  natural numbers, respectively.
A \emph{probability distribution} over a countable set $S$ is a function
$f:\states\to[0,1]$ with $\sum_{\state\in \states}f(\state)=1$.
We write
%$\supp(f) \eqdef \{\state \in \states \mid f(\state) > 0\}$ for the \emph{support} of $f$.
%Let
$\dist(\states)$ for the set of all probability distributions over $\states$.

For a set $S$ we write $S^*$ (resp.\ $S^\omega$) for the set of all finite (resp.\ infinite) sequences of elements in $S$. %and $S^\omega$ for the set of all infinite sequences over $S$.
We use slightly generalized regular expressions for sets of sequences, e.g., if $s_0 \in S$ we may write $s_0 S^\omega$ for the set of infinite sequences starting with~$s_0$.

\medskip
\noindent{\bf Markov decision processes.}
A \emph{Markov decision process} (MDP) $\mdp=\mdptuple$ consists of
a countable set~$\states$ of \emph{states},
which is partitioned into a set~$\zstates$ of \emph{controlled states}
and  a set~$\rstates$ of \emph{random states},
a  \emph{transition relation} $\transition\subseteq\states\x\states$,
and a  \emph{probability function}~$\probp:\rstates \to \dist(\states)$.
We  write $\state\transition{}\state'$ if $\tuple{\state,\state'}\in \transition$,
and  refer to~$s'$ as a \emph{successor} of~$s$.
We assume that every state has at least one successor.
The probability function~$P$  assigns to each random state~$\state\in \rstates$
a probability distribution~$P(\state)$ over its (non-empty) set of successor states.
A \emph{sink in $\mdp$} is a subset $T \subseteq \states$ closed under the $\transition$ relation,
that is,  $\state \in \reachset$ and  $\state\transition\state'$ implies that $\state'\in T$.

An MDP is \emph{acyclic} if the underlying directed graph~$(S,\transition)$ is acyclic, i.e.,
there is no directed cycle.
It is  \emph{finitely branching}
if every state has finitely many successors
and \emph{infinitely branching} otherwise.
An MDP without controlled states
($\zstates=\emptyset$) is called a \emph{Markov chain}.

% Already in the intro; lets keep the preliminaries non-intuitive.
%Intuitively, an MDP~$\M$ starts  in a given  initial state~$\state_0$.
%In each round,  if $\M$ is in state~$\state \in \zstates$
%then the  controller chooses a successor state~$s'$
%with~$s\transition{}s'$;
%otherwise $\M$ is in a random state~$s\in \rstates$ and
%proceeds randomly to~$s'$ with probability~$\probp(s)(s')$.
%To make this formal we need a number of definitions, which are introduced below.

\medskip
\noindent{\bf Strategies and Probability Measures.}
A \emph{run}~$\play$ is an  infinite sequence $\state_0\state_1\cdots$ of states
such that $\state_i\transition{}\state_{i+1}$ for all~$i\in \mathbb{N}$;
write~$\play(i)\eqdef\state_i$ for the $i$-th state along~$\play$.
A \emph{partial run} is a finite prefix of a run.
We say that (partial) run $\play$ \emph{visits} $\state$ if
$\state=\play(i)$ for some $i$, and that~$\play$ starts in~$s$ if $\state=\play(0)$.

A \emph{strategy} %of the player
is a function $\zstrat:\states^*\zstates \to \dist(S)$ that
assigns to partial runs $\partialplay\state \in \states^*\zstates$
a distribution over the successors~$\{\state'\in \states\mid \state \transition{} \state'\}$.
The set of all strategies  in $\mdp$ is denoted by $\zstratset_\mdp$
(we omit the subscript and write~$\zstratset$ if $\mdp$ is clear from the context).
A (partial) run~$\state_0\state_1\cdots$ is induced by strategy~$\zstrat$
if for all~$i$
either $\state_i \in \zstates$ and $\zstrat(\state_0\state_1\cdots\state_i)(\state_{i+1})>0$,
or
$\state_i \in \rstates$ and $\probp(\state_i)(\state_{i+1})>0$.
%if~$\state_{i+1}\in \supp(\zstrat(\state_0\state_1\cdots\state_i))$
%for all~$i$ with~$\state_i \in \zstates$, and
%$\state_{i+1}\in \supp(\probp(\state_i))$ for all~$i$ with~$\state_i \in \rstates$.
%
%We use~$\playsof{\mdp,\state,\zstrat}$ to denote the set of all runs in $\mdp$ starting in state $\state$ induced by $\zstrat$.

An MDP $\mdp=\mdptuple$, an initial state $\state_0\in \states$, and a strategy~$\zstrat$
induce a probability space in which the outcomes are runs starting in $\state_0$
and with measure $\probm_{\mdp,\state_0,\zstrat}$
%$\playset \subseteq \state_0 \states^\omega$
%of runs starting from $\state_0$
defined as follows.
%We write $\probm_{\mdp,\state_0,\zstrat}({\playset})$ for the probability of a
%measurable set $\playset \subseteq \state_0 \states^\omega$ of runs starting from~$\state_0$.
It is first defined on \emph{cylinders} $s_0 s_1 \ldots s_n \states^\omega$, where $s_1, \ldots, s_n \in \states$:
if $s_0 s_1 \ldots s_n$ is not a partial run induced by~$\zstrat$ then
$\probm_{\mdp,\state_0,\zstrat}(s_0 s_1 \ldots s_n \states^\omega) \eqdef 0$.
Otherwise, $\probm_{\mdp,\state_0,\zstrat}(s_0 s_1 \ldots s_n \states^\omega) \eqdef \prod_{i=0}^{n-1} \bar{\zstrat}(s_0 s_1 \ldots s_i)(s_{i+1})$, where $\bar{\zstrat}$ is the map that extends~$\zstrat$ by $\bar{\zstrat}(w s) = \probp(s)$ for all $w s \in \states^* \rstates$.
By Carath\'eodory's theorem~\cite{billingsley-1995-probability},
this extends uniquely to a probability measure~$\probm_{\mdp,\state_0,\zstrat}$ on
the Borel $\sigma$-algebra $\?F$ of subsets of~$s_0 \states^\omega$.
Elements of $\?F$, i.e., measurable sets of runs, are called \emph{events} or \emph{objectives} here.
For $X\in\?F$ we will write $\complementof{X}\eqdef s_0S^\omega\setminus X\in \?F$ for its complement
and $\expectval_{\mdp,\state_0,\zstrat}$
for the expectation w.r.t.~$\probm_{\mdp,\state_0,\zstrat}$.
We drop the indices wherever possible without introducing ambiguity.
%induce a probability space $(\Omega,\?F,\probm_{\mdp,\state_0,\zstrat})$,
%where the sample space $\Omega$ is $s_0S^\omega$,
%the set of all runs starting from $s_0$,
%$\?F$ is the Borel sigma-algebra

\medskip
\noindent{\bf  Strategy Classes.}
Strategies are in general  \emph{randomized} (R) in the sense that they take values in $\dist(\states)$.
A strategy~$\zstrat$ is \emph{deterministic} (D) if $\zstrat(\rho)$ is a Dirac distribution
for all runs~$\rho\in \states^{*} \zstates$.

%In order to formalize the amount of \emph{memory} needed to implement a strategy~$\zstrat$, we
%define a congruence $\equiv_{\zstrat}$ on $\states^* \zstates$  by
%$w\equiv_{\zstrat} w'$ if and only if  we have $\zstrat(w \rho)=\zstrat(w\rho)$
%for all~$\rho\in \states^* \zstates$. The  memory required by~$\zstrat$ is
%the cardinality of the set of  $\equiv_{\zstrat}$-equivalent classes.
%In particular,
%a strategy~$\zstrat$ is \emph{finite memory}~(F) if
%$\equiv_{\zstrat}$ has finite index;
%otherwise we say that~$\zstrat$ \emph{requires infinite memory}.

We formalize the amount of \emph{memory} needed to implement strategies.
Let $\memory$ be a countable set of memory modes, and let $\tau: \memory\times \states \to \dist(\memory\times \states)$ be a function that meets the
following two conditions: for all modes $\memconf \in \memory$,
\begin{itemize}
	\item for all controlled states~$\state\in \zstates$,
	the distribution $\tau(\memconf,\state)$ is   over
	$\memory \times \{\state'\mid \state \transition{} \state'\}$.
	\item for all random states~$\state \in \rstates$, we have $\sum_{\memconf'\in \memory} \tau(\memconf,\state)(\memconf',\state')=P(\state)(\state')$.
\end{itemize}

The function~$\tau$ together with an initial memory mode~$\memconf_0$
induce a strategy~$\zstrat_{\tau}:\states^*\zstates \to \dist(S)$ as follows.
Consider the Markov chain with the set~$\memory \times \states$ of states
and the  probability function~$\tau$.
A sequence $\rho=s_0 \cdots s_i$ corresponds to a set
$H(\rho)=\{(\memconf_0,s_0) \cdots (\memconf_i,s_i) \mid \memconf_0,\ldots, \memconf_i\in \memory\}$
of runs in this Markov chain.
Each $\rho s \in \state_0 \states^{*} \zstates$
induces a
probability distribution~$\mu_{\rho \state}\in \dist(\memory)$,
%the probability $\mu_{\rho \state}(\memconf)$ is
the  probability of   being in  state~$(\memconf,s)$
conditioned on having  taken some partial run
from~$H(\rho s)$.
We define~$\zstrat_{\tau}$ such that
$\zstrat_{\tau}(\rho \state)(\state')=\sum_{\memconf,\memconf'\in \memory} \mu_{\rho \state}(\memconf) \tau(\memconf,\state)(\memconf',\state') $
for all $\rho \state\in \states^{*} \zstates$ and all $\state' \in \states$.

We say that a strategy $\zstrat$ can be \emph{implemented} with
memory~$\memory$ if there exist~$\memconf_0 \in \memory$
and $\tau$ such that  $\zstrat_{\tau}=\zstrat$.
We define certain classes of strategies:
\begin{itemize}
\item %The  memory required by~$\zstrat$ is
%the cardinality  of the smallest set  $\memory$ that can implement $\zstrat$.
%In particular,
%a strategy~$\zstrat$ is \emph{finite memory}~(F) if
%there exists a finite~$\memory$ implementing~$\zstrat$.
%otherwise we say that~$\zstrat$ \emph{requires infinite memory}.
A strategy~$\zstrat$ is \emph{finite memory}~(F) if
there exists a finite memory~$\memory$ implementing~$\zstrat$.

\item A strategy $\zstrat$ is  \emph{memoryless}~(M) (also called \emph{positional})
if it can be implemented with a memory  of size~$1$.
%In other words, if $\zstrat(\rho \state)=\zstrat(\rho' \state)$
%for all $\state\in \zstates$ and $\rho,\rho'\in S^*$).
We may view
M-strategies as functions $\zstrat: \zstates \to \dist(\states)$.

\item A strategy $\zstrat$ is  \emph{1-bit}
if it can be implemented with a memory  of size~$2$.
Such a strategy is then determined by a function
$\tau:\{0,1\}\times \states \to \dist(\{0,1\} \times \states)$.
Intuitively~$\tau$ uses one bit of memory %switching between the value $0$ and $1$
to capture two different modes. % of memory.

\item A strategy~$\zstrat$ is \emph{Markov} if
 it can be implemented with the natural numbers $\mathbb{N}$
as the memory, and a function $\tau$ such that the distribution
$\tau(\memconf,\state)$ is over $\{\memconf+1\}\times \states$
for all $\memconf\in \memory$ and $\state\in \states$.
Intuitively, such a  strategy depends only on the
the current state and the number of steps taken so far,
i.e., it has access to a step-counter.
We  view Markov strategies as functions
$\zstrat: \mathbb{N} \times \zstates \to \dist(\states)$.
Note that such a strategy is generally not finite memory.

\item A strategy~$\zstrat$ is \emph{1-bit Markov} if
 it can be implemented with $\mathbb{N} \times \{0,1\}$
as the memory, and a function $\tau$ such that the distribution
$\tau(n,b,\state)$ is over $\{n+1\}\times \{0,1\}\times \states$
for all $(n,b)\in \memory$ and $\state\in \states$.
We  view such strategies as functions
$\zstrat: \mathbb{N}\times \{0,1\} \times \zstates \to \dist(\{0,1\} \times \states)$.
\end{itemize}

%\medskip
%\noindent {\bf Probability Measures.}
%An MDP $\mdp=\mdptuple$, an initial state~$\state_0$, and a strategy~$\zstrat$
%induce a  probability measure on sets of infinite runs
%in a standard way.
%We write $\probm_{\mdp,\state_0,\zstrat}({\playset})$ for the probability of a
%measurable set $\playset \subseteq \state_0 \states^\omega$ of runs starting from~$\state_0$.
%The measure is defined, as usual, by first defining it on the \emph{cylinders} $s_0 s_1 \ldots s_n \states^\omega$,
 %where $s_1, \ldots, s_n \in \states$:
%if $s_0 s_1 \ldots s_n$ is not a partial play induced by~$\zstrat$ then set $\probm_{\mdp,\state_0,\zstrat}(s_0 s_1 \ldots s_n \states^\omega) = 0$;
%otherwise set $\probm_{\mdp,\state_0,\zstrat}(s_0 s_1 \ldots s_n \states^\omega) = \prod_{i=0}^{n-1} \bar{\zstrat}(s_0 s_1 \ldots s_i)(s_{i+1})$, where $\bar{\zstrat}$ is the map that extends~$\zstrat$ by $\bar{\zstrat}(w s) = \probp(s)$ for any $w s \in \states^* \rstates$.
%Using Carath\'eodory's extension theorem~\cite{billingsley-1995-probability},
%this defines a unique probability measure~$\probm_{\mdp,\state_0,\zstrat}$ on measurable subsets of~$s_0 \states^\omega$.

\newcommand{\payoff}{f}
\newcommand{\reward}{\mathit{r}}
\medskip
\noindent {\bf Payoffs, Values, Optimality.}
We are interested in strategies to maximize the expectation of a given
measurable \emph{payoff} function
$\payoff:\states^\omega\to \R$,
a random variable that assigns a real value to every run.
The \emph{value} of state $\state$ (w.r.t.~$f$) is
%denoted by $\valueof{\mdp}{\state}$, is
the supremum of expected values of $\payoff$ over all strategies:
$$
\valueof{\mdp,\payoff}{\state} \eqdef \sup_{\zstrat\in\zstratset}\expectval_{\mdp,\state,\zstrat}(\payoff),
$$
%The value achieved by strategy $\zstrat$ in state $\state$ is
%%$\valueof{\mdp,\zstrat}{\state} \eqdef
%$\expectval_{\mdp,\state,\zstrat}(\payoff)$,
%the expected value of $\payoff$ w.r.t.~measure $\probm_{\mdp,\state,\zstrat}$.
%%
For $\eps \ge 0$ and $\state \in \states$, we say that
a strategy $\zstrat$ is \emph{$\eps$-optimal}  iff
$\expectval_{\mdp,\state,\zstrat}(\payoff) \ge \valueof{\mdp,\payoff}{\state}
-\eps$
and \emph{uniformly} $\eps$-optimal iff this holds for every $\state\in\states$.
A (uniformly) $0$-optimal strategy is simply called (uniformly) \emph{optimal}.
%An optimal strategy is \emph{almost-surely winning} if $\valueof{\mdp}{\state}=1$.
%Unlike in finite-state MDPs, optimal strategies need not exist
%in countable MDPs, not even for reachability objectives in finitely branching MDPs.
%However, by the definition of the value, for all
%$\eps>0$, an $\eps$-optimal strategy exists.

In this paper, we will need two types of payoff functions.
The first is the \emph{total reward}, a random variable given as $\payoff(\rho) \eqdef\sum_{t=0}^\infty \reward(\rho(t))$,
where $\reward:\states\to\R$ is some given \emph{reward} function.
A useful fact \cite[Theorem 7.1.9]{Puterman:book} is that if $\states$ is finite and the range of $\reward$ is bounded then there exist optimal strategies (for total reward) which are memoryless and deterministic.

The second type of payoff functions we consider are
those with range $\{0,1\}$. Each such payoff function~$\payoff$ uniquely identifies an objective (set of runs)~$\formula$ by viewing~$f$ as the characteristic function of~$\formula$, i.e., $\payoff(\rho)=1$ if $\rho\in \formula$ and $0$ otherwise.
Then %the value achieved by a strategy $\zstrat$ is
$\expectval_{\mdp,\state,\zstrat}(\payoff) = \probm_{\mdp,\state,\zstrat}(\formula)$.
We call this the \emph{probability of achieving} $\formula$ (using strategy~$\zstrat$ starting from the state~$\state$)
and simply write $\valueof{\mdp,\formula}{\state} = \valueof{\mdp,\payoff}{\state} = \sup_{\zstrat\in\zstratset}\probm_{\mdp,\state,\zstrat}(\formula)$.
%for the value of state $\state$ w.r.t.~the objective $R$.

%Let $\mdp = \mdptuple$ be an MDP.
%In general, an objective~$\playset$ is a measurable subset of $S^{\omega}$.
%The probability of achieving objective~$\playset$ using strategy~$\zstrat$ starting from the state~$\state$
%is $\probm_{\mdp,\state,\zstrat}(\playset)$.
%
%\medskip
%\noindent {\bf Goal and repeated-goal objectives.}
%\ptnote{wip.: I will define (repeated) goal as objectives based on reg.expressions as used later.}
Our main focus are \emph{reachability} (sometimes also called \emph{goal}) and \emph{\Buchi} objectives, which are determined by a set of states $F\subseteq S$ and defined as follows.
Let us slightly abuse notation and identify $F$ with its characteristic function, i.e., $F(s)=1$ if $s\in F$.
\begin{itemize}
    \item
The \emph{reachability} objective is to visit $F$ at least once during a run. The corresponding payoff is $\payoff(\play)\eqdef \max_{t\in \N}\play(t)$,
and we define
$\setf(F) \eqdef\{\play \in \states^{\omega} \mid \max_{t\in\N}F(\play(t))=1\}$;

\item
The \emph{\Buchi} objective is to visit $F$ infinitely often. The corresponding payoff function is
$\payoff(\play)\eqdef \limsup_{t\to\infty}F(\rho(t))$, and we let
$\reset(F)\eqdef\{\play \in \states^{\omega}\mid \limsup_{t\to \infty} F(\play(t))=1\}.$
\end{itemize}

\section{The Lower Bound}\label{sec:lower}
In this section we solve Hill's problem (\cite{Hill:79}
and \cite[Q1]{Hill:99}) by exhibiting an MDP
where the initial state has value $1$ w.r.t.\ the \Buchi\ objective,
but every Markov strategy achieves this objective with probability~$0$.
As explained in the introduction, it follows that in acyclic MDPs, $\eps$-optimal MR-strategies are not guaranteed to exist.
In fact, in the following theorem we prove the latter fact first, and subsequently generalize it to solve Hill's problem.

\begin{restatable}{ourtheorem}{thmnoMarkov} \label{thm-no-Markov}
There exists an acyclic MDP~$\mdp$, a state~$\state_0$ and a set of states~$F$ such that
\begin{enumerate}
\item
for every Markov strategy~$\zstrat$, we have $\probm_{\mdp,\state_0,\zstrat}(\reset(F)) = 0$, and
\item
$\valueof{\reset(F)}{\state_0} = 1$ and for every $\eps>0$
there exists a deterministic 1-bit strategy~$\zstrat_\eps$ s.t.~
$\probm_{\mdp,\state_0,\zstrat_\eps}(\reset(F)) \ge 1-\eps$.
\end{enumerate}
\end{restatable}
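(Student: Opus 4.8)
The plan is to exhibit a single recursive, self-similar MDP in which visiting $F$ infinitely often amounts to surviving an infinite sequence of ``excursions,'' and in which the step-counter is deliberately decorrelated from the excursion a run is currently in. I would build $\mdp$ from nested gadgets $T_1, T_2, \dots$, where $T_n$ is assembled from a small fixed ``block'' of random states together with two sub-copies of $T_{n-1}$ that \emph{share} their targets, so that $\mdp$ is an acyclic DAG rather than a tree, while staying finitely branching. Goal states $g\in F$ are placed so that a run lies in $\reset(F)$ iff it passes through infinitely many of them, forcing it to complete infinitely many excursions, and controlled (``gamble'') states are inserted whose two moves correspond to a safe-but-delaying option and a commit-now option. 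The sharing of sub-copies is the whole point: a given controlled state is entered after widely varying numbers of prior excursions, and the random blocks are tuned so that the step count at which it is entered carries no information about which excursion the run is in.

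For Part~1 I would fix an arbitrary Markov strategy $\zstrat$ — a choice of distribution for each pair (controlled state, step number) — and prove $\probm_{\mdp,\state_0,\zstrat}(\reset(F))=0$ by a telescoping Borel--Cantelli estimate. The heart of the argument is a uniform lemma: there is a constant $c<1$, independent of $\zstrat$ and of the excursion index, such that, conditioned on any partial run that has just entered an excursion, the probability that $\zstrat$ both avoids the losing sink and reaches the next goal is at most $c$. The intuition is that $\zstrat$ sees only (state, step), and since the step is decorrelated from the excursion, any fixed commit decision is either too aggressive (absorbed into the losing sink with probability bounded below) or too cautious (delaying and never reaching the next goal) for a non-negligible fraction of the conditioning histories. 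Multiplying this bound over the infinitely many excursions that a $\reset(F)$-run must complete gives $\probm_{\mdp,\state_0,\zstrat}(\reset(F)) \le \lim_{k\to\infty} c^{k} = 0$. Since memoryless randomized strategies are the special case of Markov strategies that ignore the counter, this simultaneously rules out $\eps$-optimal MR-strategies.

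For Part~2 I would exhibit, for each $\eps>0$, a deterministic $1$-bit strategy $\zstrat_\eps$ that beats every Markov strategy precisely by carrying the one piece of information the step-counter cannot supply. The bit records the current phase within an excursion — whether, since the last visited goal, the strategy has already reached a position it deems safe enough and is now committing — and it can update this along the transitions of the shared gadgets (a $1$-bit memory is allowed to correlate its update with the successor taken at a random state). Re-synchronizing on the bit at the start of every excursion, $\zstrat_\eps$ climbs until it reaches a position whose built-in commit-failure probability lies below the threshold the construction assigns there, and only then commits; since runs progress monotonically to ever-deeper, ever-safer positions, these per-excursion failure probabilities are summable and, with the thresholds chosen appropriately, sum to at most $\eps$. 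A union bound then shows the total absorption probability is at most $\eps$, while on the surviving runs $F$ is visited infinitely often, so $\probm_{\mdp,\state_0,\zstrat_\eps}(\reset(F)) \ge 1-\eps$; letting $\eps\to 0$ yields $\valueof{\reset(F)}{\state_0}=1$.

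The main obstacle is the uniform lemma in Part~1: proving that the step-counter is genuinely useless against \emph{every} Markov strategy. This is exactly where the self-similar sharing of sub-gadgets must be engineered so that, at each depth band, the ensemble of runs reaching a given controlled state is spread across many excursion-phases with mutually incompatible ``correct'' commit decisions, and so that this spread is preserved at every level of the recursion. Extracting from this a single constant $c<1$ that is uniform over all depths and all Markov strategies — rather than a bound that degrades with the excursion index — is the delicate point; by comparison, checking acyclicity and finite branching of $\mdp$ and carrying out the value computation in Part~2 are routine.
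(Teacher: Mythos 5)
Your high-level intuition matches the paper's: the counterexample must create an ambiguity---at the same (state, step-count) pair a run may either still need to reach the next goal or already be past it---which one extra bit resolves but a step-counter cannot. However, there is a genuine gap: you never actually construct the MDP, and you leave unproven the single statement that carries all of the difficulty, namely the ``uniform lemma'' capping every Markov strategy's per-excursion success probability at some $c<1$. Saying the random blocks are ``tuned so that the step count carries no information'' is a desideratum, not a construction; the paper realizes decorrelation concretely and much more simply, by padding transitions with chains of non-goal states so that every path from $\state_0$ to a given state has the same length, whence the step-counter is a function of the current state and Markov strategies collapse to MR-strategies. The quantitative heart of the paper is then \cref{lem-no-MR-key}, an induction over subtree heights yielding $s \le a^{q t n^2}$ and hence the key inequality $d_n \ge \frac{q}{4} t_n$ relating death and success probabilities for \emph{every} MR-strategy; nothing in your proposal substitutes for this, and you yourself flag it as ``the delicate point.'' A plan whose central lemma is acknowledged as open is not a proof.

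Beyond the missing lemma, the architecture built on it is also shaky. As stated, your lemma conditions on a single partial run, but your justification (``too aggressive or too cautious for a non-negligible \emph{fraction} of the conditioning histories'') only makes sense for an ensemble of histories merging at the same (state, step); the weights of that ensemble depend on the very strategy being analyzed, and this circularity---the strategy shapes which phases reach a given (state, step), which in turn determines which decisions are bad---is exactly what makes the paper's lemma nontrivial and forces its quantitative, inductive form. Note also that the paper's MDP does not support your telescoping $c^k$ bound: its trees $T^n$ are entered only with probability $\frac1n$ and can be traversed with neither success nor death (always moving upward), so $\reset(F)$ does not require succeeding in every excursion. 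The paper instead derives a contradiction between convergence of $\sum_n \frac1n d_n$ (needed to avoid losing forever) and divergence of $\sum_n \frac1n t_n$ (needed to visit goals infinitely often), linked by the key inequality. Your mandatory-sequential-excursion design with a uniform cap $c$ for Markov strategies yet summable per-excursion failure for 1-bit strategies is not obviously impossible, but nothing in the proposal shows such an MDP exists; both the construction and the lemma for Part 1 remain open, and the Part 2 argument (which otherwise mirrors the paper's commit-then-play-safe 1-bit strategy) cannot be evaluated without them.
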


\noindent
In the remainder of this section we provide a proof sketch.
The full proof is in \cref{app:lower}.

\begin{proof}[Proof sketch for \autoref{thm-no-Markov}]
Our construction is based on an infinite MDP~$\M$ that consists of a chain of height-$n$ trees, $T^n$, for $n \in \N = \{1, 2, \ldots\}$.
\autoref{fig-no-MR} depicts its initial segment $T^1, T^2, T^3$.
Each such tree is ``rooted'' at a brown state on the top level, with a transition incoming from a blue state.
We make use of some conventions that simplify the presentation and the analysis.
In \autoref{fig-no-MR}, the different colors of the states highlight the structure of the MDP; the colors are also indicated by letters in the states: blue (L), brown (B), yellow (Y), red (R), green (G), white (W).
The start state, $\state_0$, is the blue state in the top-left corner.
The controlled states are exactly the yellow states.
The goal set $F$ consists of the green states at the bottom.
Two transitions emanate from each red state: a black (right) transition and a red (left) transition, both leading to the same (brown or green) state.

We consider the strengthened \Buchi\ objective that asks to see $F$ infinitely often and moreover that
\emph{no red transition} is taken. This corresponds exactly to the normal \Buchi\ objective if we redirect every red transition to an infinite (losing) chain of non-green states (not depicted in \autoref{fig-no-MR}).

We first argue that no MR-strategy achieves a positive probability of that objective.
Then we show that the MDP~$\M$ can be modified so that no Markov strategy achieves a positive probability.

\begin{sidewaysfigure}
%\centering
\scalebox{0.85}{
\mbox{}\hspace{-15mm}
\begin{tikzpicture}[auto,inner sep=1,xscale=1.3,yscale=1.5]
\onebit
\end{tikzpicture}
}
\caption{For this acyclic MDP~$\M$ there are $\eps$-optimal deterministic
    1-bit strategies for the \Buchi\ objective $\reset(\reachset)$ where $\reachset$ contains exactly all green states.
    No MR-strategy achieves even a positive probability.
}
\label{fig-no-MR}
\end{sidewaysfigure}
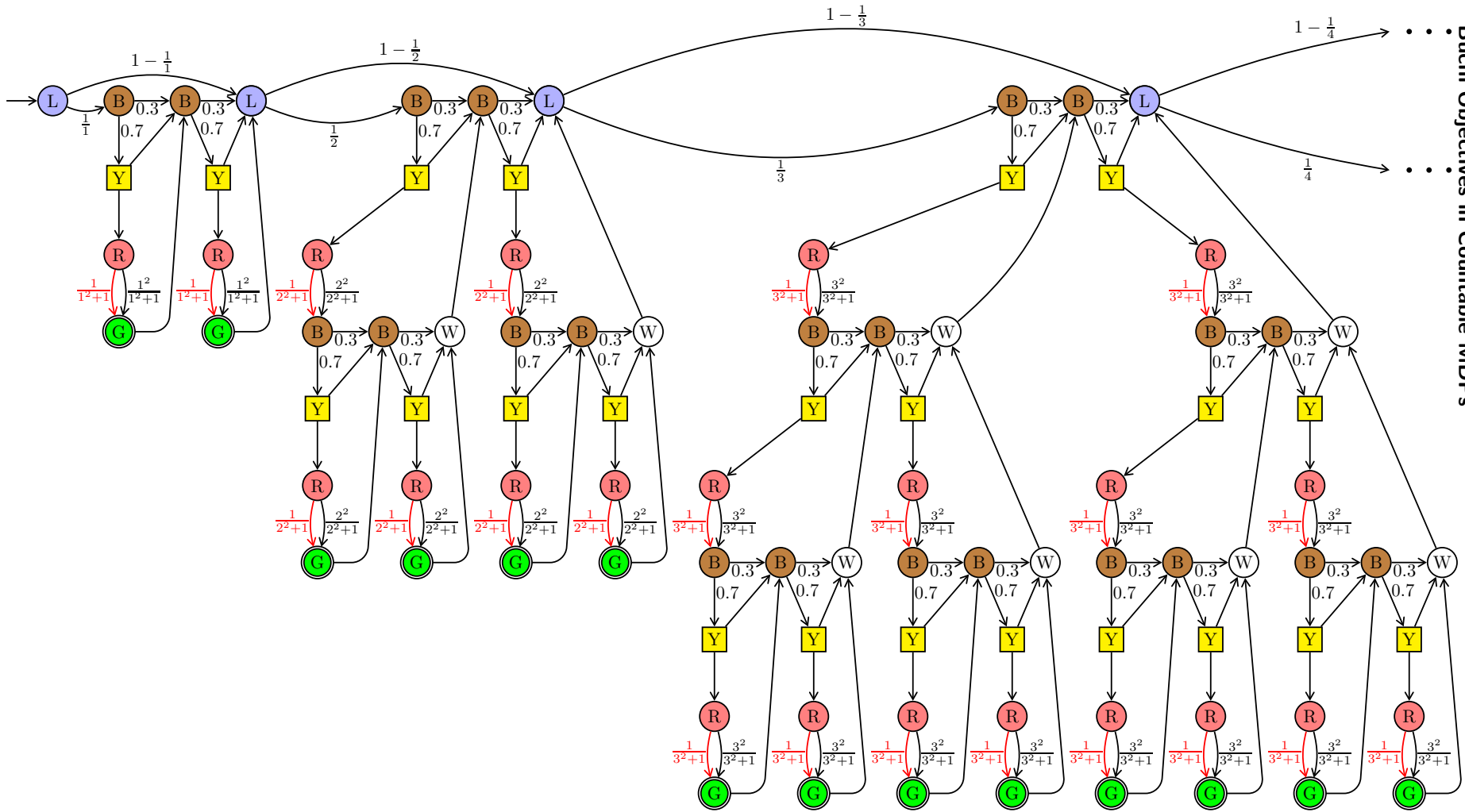

\medskip
\noindent
\smallparg{Intuition behind the construction of~$\M$.}
The objective, say $\formula$, of visiting infinitely many green states and no red transition creates tension between trying to visit green states and avoiding too many red states (the latter states incur a risk of taking a red transition).
In the proof we need to show that no memoryless strategy strikes a good balance between these competing goals.
On the one end of the spectrum, an MR-strategy might always choose the upward transition in the yellow states (which are the only controlled states).
But such a strategy never visits any green state, thus clearly violates~$\formula$.
On the other end of the spectrum lies the ``greedy'' MR-strategy, which always chooses the downward transition in the yellow states, in order to visit as many green states as possible.
Indeed, under this strategy, let $u_n$ denote the probability that, starting in the top-left brown state of~$T^n$, no green state is visited in~$T^n$.
By induction (given in the appendix as part of the general proof) one can show that there is $u<1$ such that $u_n \le u$ holds for all~$n$.
Considering the probability of the transitions emanating from the blue states (at the top), the expected overall number of visited green states is at least $\sum_{n = 1}^\infty \frac1{n} (1-u_n) \ge (1-u) \sum_{n = 1}^\infty \frac1{n} = \infty$.
It is not hard to strengthen this statement so that the greedy strategy almost surely visits infinitely many green states.
So the greedy strategy satisfies one part of~$\formula$, but it does so at the expense of visiting many red states.
Red states though are associated with a risk of taking a red transition, and it follows from the proof in the appendix that the greedy strategy almost surely ends up taking at least one (and indeed infinitely many) red transition(s).

\medskip
\noindent
\smallparg{Good 1-bit strategies.}
The two competing goals discussed in the previous paragraph can be balanced using a deterministic 1-bit strategy, which we describe in the following.
This strategy, $\zstrat_1$, sets its bit to~$0$ whenever a blue state (at the top) is entered.
While the bit is~$0$, in each tree~$T^n$ it maximizes the probability of visiting a green state by choosing the downward transition in the yellow states, thus accepting a certain risk of taking a red transition.
However, if and when a green state in~$T^n$ is visited, the bit is set to~$1$, and for the remaining sojourn in~$T^n$ the strategy~$\zstrat_1$ chooses the upward transitions in the yellow states, thus avoiding any risk of a red transition in the remainder of~$T^n$.
Although $\zstrat_1$ appears to visit fewer green states than the aforementioned ``greedy'' MR-strategy, $\zstrat_1$ still visits infinitely many green states almost surely.
This is because for each tree~$T^n$, the two strategies have the same probability of visiting at least one green state in~$T^n$.
The strategy $\zstrat_1$ can be improved, for each $\eps>0$, to achieve~$\formula$ with probability at least $1-\eps$, by fixing the bit to~$1$ in the first $k$ trees $T^1, \ldots, T^k$, for a $k$ that depends on~$\eps$.
Thus the first $k$ trees are virtually skipped, eliminating the risk of taking any red transition there.
In this way one can make the risk of taking a red transition arbitrarily small, while still visiting infinitely many green states with probability~$1$.

\medskip
\noindent
\smallparg{No good MR-strategies.}
We need to show that not only the extreme MR-strategies described above are inadequate but that every MR-strategy achieves~$\formula$ with probability~$0$.
To this end, for each tree $T^n$, define two probabilities:
\begin{itemize}
\item $t_n$ (for ``total success''): the probability that, starting in the top-left brown state of $T^n$, at least one green state but no red transition is visited in~$T^n$;
\item $d_n$ (for ``death''): the probability that, starting in the top-left brown state of $T^n$, a red transition is visited in~$T^n$.
\end{itemize}
A very technical proof shows that $d_n \ge 0.008 \cdot t_n$ holds for all~$n$, and this key inequality captures the inability of \emph{any} MR-strategy to strike an adequate balance between the mentioned competing goals.
Indeed, one can show that for an MR-strategy to have a positive probability of not visiting any red transition, the series $\sum_{n=1}^\infty \frac1{n} \cdot d_n$ needs to converge;
but to have a positive probability of visiting infinitely many green states, the series $\sum_{n=1}^\infty \frac1{n} \cdot t_n$ needs to diverge (in both cases, the factor $\frac1n$ is the probability of visiting the top-left brown node of~$T^n$).
By the inequality above, this is impossible.

\medskip
\noindent
\smallparg{No good Markov strategies.}
For the proof of \autoref{thm-no-Markov}, we also need to show that all Markov strategies achieve probability~$0$.
To this end, we modify the MDP~$\M$ so that for each state, all paths from the initial state~$s_0$ to~$s$ have the same length.
This can be achieved by replacing some transitions in~$\M$ by longer chains consisting of non-green states.
This modification does not change the fact that MR-strategies achieve probability~$0$.
But since in the new MDP each state can only be visited at a certain time, which is known a priori, a step-counter does not help.
Hence all Markov strategies, like MR-strategies, achieve~$\formula$ with probability~$0$.
\end{proof}
\autoref{thm-no-Markov} answers Hill's question negatively.
By combining the MDP from \autoref{thm-no-Markov} with one of the MDPs
from \autoref{fig:infmem-Buchi} (by adding a new initial random state that
branches to the MDPs with probability $\frac12$ each), one can even construct a single
MDP whose value w.r.t.~$\reset(F)$ is~$1$,
but every FR- and every Markov strategy achieves probability~$0$.

A slight modification of the example above yields a lower bound on the memory
requirements for the almost-sure parity objective.
Recall that the parity objective is defined on systems whose states are
labeled by a finite set of colors $C \eqdef \{1,2,\dots,{\it max}\} \subseteq \N$,
where a run is in $\Parity{C}$ iff the highest color that is seen infinitely
often in the run is even.

\begin{restatable}{ourcorollary}{corParity}\label{cor-123Parity}
There exist an acyclic MDP~$\mdp'$ with colors $\{1,2,3\}$ and a state~$\state_0$ such that
\begin{enumerate}
\item
for every Markov strategy~$\zstrat$, we have $\probm_{\mdp',\state_0,\zstrat}(\Parity{\{1,2,3\}}) = 0$, and
\item
there exists a deterministic 1-bit strategy $\zstrat'$ such that
$\probm_{\mdp',\state_0,\zstrat'}(\Parity{\{1,2,3\}}) = 1$.
\end{enumerate}
\end{restatable}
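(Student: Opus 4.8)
The plan is to obtain $\mdp'$ from (the time-aligned version of) the MDP~$\M$ in the proof of \autoref{thm-no-Markov} by a colouring that makes $\Parity{\{1,2,3\}}$ coincide with a mild relaxation of the strengthened \Buchi\ objective~$\formula$ used there. Concretely, I would give colour~$2$ to every green state (i.e.\ every state of~$F$), colour~$1$ to every other original state, and modify each red transition as follows: rather than leading it to a losing chain, I redirect it through a fresh state of colour~$3$ to the state it originally pointed to. Thus taking a red transition now exhibits colour~$3$ exactly once and then lets the run continue normally; this keeps $\mdp'$ acyclic, since new states are only inserted on existing transitions. As $2$ is the only even colour and $3$ the only odd colour exceeding it, a run lies in $\Parity{\{1,2,3\}}$ iff it sees colour~$2$ infinitely often and colour~$3$ only finitely often, i.e.\ iff it visits green states infinitely often and takes only finitely many red transitions.

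For claim~(1) I would reuse the reduction from Markov to MR strategies from the proof of \autoref{thm-no-Markov}: once all paths from~$\state_0$ to any state have equal length, the step counter carries no information, so every Markov strategy behaves memorylessly, and it suffices to bound MR strategies. Here I would argue by a case distinction on the series $\sum_{n=1}^\infty \frac1n d_n$, where $\frac1n$ is the probability of reaching the top of~$T^n$. If this series diverges, then, exactly as in the proof of \autoref{thm-no-Markov}, a red transition is taken infinitely often almost surely, so colour~$3$ appears infinitely often and parity fails. If it converges, then the key inequality $d_n \ge 0.008\, t_n$ gives $\sum_{n=1}^\infty \frac1n t_n \le \tfrac{1}{0.008}\sum_{n=1}^\infty \frac1n d_n < \infty$, so by Borel--Cantelli only finitely many green states are visited almost surely, whence colour~$2$ appears finitely often and parity again fails. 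Either way the probability is~$0$.

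For claim~(2) I would take the deterministic $1$-bit strategy~$\zstrat_1$ from the proof of \autoref{thm-no-Markov}, which already visits infinitely many green states almost surely, yielding colour~$2$ infinitely often. The remaining point is that under~$\zstrat_1$ only finitely many red transitions are taken almost surely, so colour~$3$ is seen finitely often and $\Parity{\{1,2,3\}}$ holds with probability~$1$. This is precisely where the relaxation from~$\formula$ (which forbids red transitions altogether) to parity (which tolerates finitely many) is used: whereas $\zstrat_1$ achieves~$\formula$ only with probability~$<1$, the per-tree red-transition probabilities~$d_n^{(1)}$ under~$\zstrat_1$ are summable against the weights~$\frac1n$, so that $\sum_n \frac1n d_n^{(1)} < \infty$ and Borel--Cantelli gives finitely many red transitions almost surely.

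The main obstacle, and the only place that goes beyond quoting \autoref{thm-no-Markov}, is the convergence of $\sum_n \frac1n d_n^{(1)}$ for the specific strategy~$\zstrat_1$: the theorem supplies the lower bound $d_n \ge 0.008\, t_n$ for all MR strategies, but for claim~(2) I instead need an \emph{upper} bound on the red-risk that $\zstrat_1$ incurs before reaching a green state in~$T^n$. I expect this to follow from the observation that the greedy phase of~$\zstrat_1$ in~$T^n$ traverses $O(n)$ red states, each carrying red-probability $\tfrac{1}{n^2+1}$, so that $d_n^{(1)} = O(1/n)$ and hence $\sum_n \frac1n d_n^{(1)} = O\!\big(\sum_n 1/n^2\big) < \infty$; alternatively it can be extracted abstractly from the $(1-\eps)$-guarantee already established for the improved strategies, since divergence of $\sum_n \frac1n d_n^{(1)}$ would (by Borel--Cantelli) force a red transition almost surely even after skipping arbitrarily many initial trees, contradicting that guarantee.
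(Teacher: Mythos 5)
Your construction of $\mdp'$ is exactly the paper's (colour $2$ on $F$, colour $1$ elsewhere, each red transition re-routed through a fresh colour-$3$ state to its original target), and your item~(1) is a legitimate unpacking of what the paper compresses into a citation of \cref{thm-no-Markov}(1): the same dichotomy on $\sum_n \frac1n d_n$, using the key inequality of \cref{lem-no-MR-key} and Borel--Cantelli in both directions. For item~(2), however, you take a genuinely different route. The paper applies \cref{thm-no-Markov}(2) with $\eps=\frac12$ and a restart argument: its strategy keeps replaying the $\frac12$-optimal $1$-bit strategy after every red transition, so the probability of infinitely many red transitions is at most $(1/2)^\infty=0$; this needs the observation that the $\frac12$-guarantee holds from every state, not just from~$\state_0$. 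You instead run the basic $1$-bit strategy $\zstrat_1$ (the strategy $\sigma$ of \cref{thm-no-MR}) unchanged and argue that its total red risk is summable. That is fully supported by the appendix: the bound $v_n\le n$ proved there gives an expected number of red transitions at most $\sum_n \frac1n\cdot\frac{n}{n^2+1}\le\frac{\pi^2}{6}<\infty$, hence almost surely finitely many red transitions, while green states are a.s.\ visited infinitely often. Your version is arguably more economical (no restart, no ``from every state'' step) and it makes explicit exactly where the relaxation from $\formula$ to parity is exploited.

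Two repairs are needed. First, in item~(1), in the case $\sum_n\frac1n d_n<\infty$ your intermediate claim is wrong as stated: $t_n$ is the probability of seeing a green state \emph{and no red transition} in $T^n$, so Borel--Cantelli yields only that a.s.\ finitely many trees contain a green visit \emph{without} a red transition --- not that finitely many green states are visited (a tree contributing greens together with a red transition is not counted by $t_n$). The conclusion survives with one extra line: on the parity event there are infinitely many green-containing trees but only finitely many red-containing ones, hence infinitely many trees with a green and no red, which is a.s.\ impossible. Second, a subtlety you share with the paper, but which bites you harder because you invoke it explicitly: inserting the fresh colour-$3$ states destroys the equal-path-length property of the time-aligned MDP (a run that has taken $r$ red transitions reaches any original state $r$ steps late), so a Markov strategy in $\mdp'$ is not literally reducible to an MR strategy by the step-counter argument. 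Either perform the alignment \emph{after} inserting the colour-$3$ states (the depth-padding construction from the proof of \cref{thm-no-Markov} still applies), or decompose over the position of the last red transition: on red-free continuations the step counter equals the depth plus a fixed offset, so each conditional strategy is MR and the MR dichotomy applies to each of the countably many pieces.
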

\begin{proof}
We obtain $\mdp'$ by modifying the MDP $\mdp$ from
\autoref{thm-no-Markov} as follows.
Label all green states in $F$ by color $2$ and the rest by color $1$.
Then modify each red transition to go to its target via a fresh state labeled by color $3$.
Clearly $\mdp'$ is still acyclic and labeled by colors $\{1,2,3\}$.

From the proof of \autoref{thm-no-Markov} (1), under every Markov strategy in
$\mdp$ a.s.\ seeing infinitely many green states (in $F$) implies seeing infinitely
many red transitions.
So in $\mdp'$ every Markov strategy $\zstrat$ a.s.\ either sees color $2$ only finitely often or
color $3$ infinitely often, thus $\probm_{\mdp',\state_0,\zstrat}(\Parity{\{1,2,3\}}) = 0$.

From the proof of \autoref{thm-no-Markov} (2), there is a deterministic 1-bit strategy
$\zstrat$ in $\mdp$ that attains probability $\ge 1/2$ for $\reset(F)$
without taking any red transition
and otherwise a.s.\ takes a red transition.
This property of $\zstrat$ holds not only when starting from $\state_0$ but from every other
state as well. We obtain $\zstrat'$ in $\mdp'$ by
continuing to play $\zstrat$ even after red transitions have been taken.
Under $\zstrat'$ the probability of going through infinitely many red transitions
(and seeing color 3) is $\le (1/2)^\infty =0$, and the probability of seeing
infinitely many states in $F$ (with color $2$) is $1$.
Thus $\probm_{\mdp',\state_0,\zstrat'}(\Parity{\{1,2,3\}}) = 1$.
\end{proof}

\section{The Upper Bound}\label{sec:upper}
We show that acyclic MDPs admit $\eps$-optimal deterministic 1-bit strategies for \Buchi.

We start by giving some intuition why 1 bit of memory is needed
%for $\eps$-optimal strategies for \Buchi\
and how it is used.
A step $\state' \transition \state''$ from some controlled state $\state'$
is \emph{value-decreasing} iff $\valueof{}{\state''} < \valueof{}{\state'}$.
While an optimal strategy can never tolerate any value-decreasing step,
an $\eps$-optimal strategy might have to take value-decreasing steps
infinitely often. The trick is to keep the collective value-loss sufficiently
small ($\le \eps$), while satisfying the other requirements of the objective.
So the strategy needs to play `ever better' (i.e., tolerate
only smaller and smaller value decreases) along a run.
In general this requires infinite memory, since one might re-visit the same
state infinitely often and needs to
choose a different transition from it every time; cf.\ \cref{fig:infmem-Buchi}.
%However, in an acyclic MDP, with high probability, one will be further and further away from the initial state as time progresses.
However, in an acyclic MDP, with high probability, the distance to the initial state increases with the number of steps taken.
Thus one can partition the state space into separate regions, depending on
the distance from the initial state, and fix an acceptable rate of
value-decrease for each region.
Just limiting the collective value-loss is not sufficient for
\Buchi, one also needs to make progress and visit the set of goal states $F$ at least
once in each region.
The problem is that some runs might linger in some region too long, and visit
$F$ \emph{many times}, but
see too many value-decreasing steps at the rate of this region.
Therefore, as soon as one has visited $F$ in some region, one should try to get
to the next outer region (further away from the initial state) where the rate of
value-loss is smaller. Thus one needs 1 bit of memory to record whether one
has already seen $F$ in this region.
(Remember that the same state can be reached by different runs with different
histories.)
Just 1 bit suffices, because the probability of returning to a previous inner
region (and misinterpreting the bit) can be made arbitrarily small, since the
MDP is acyclic.

\begin{restatable}{ourtheorem}{thmMDPonebitBuchi}\label{thm:MDP-one-bit-Buchi}
For every acyclic countable MDP $\mdp$, finite set of initial states $\initstates$, set of
states $F$ and $\eps>0$, there exists a deterministic 1-bit
strategy for $\reset(F)$ that is $\eps$-optimal from every $\state \in \initstates$.
\end{restatable}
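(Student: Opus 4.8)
The plan is to fix the \Buchi\ value function $v(\state) \defeq \valueof{\reset(F)}{\state}$ and to build a single deterministic $1$-bit strategy~$\zstrat$ for which the run process $M_t \defeq v(\play(t))$ is a supermartingale that loses at most~$\eps$ in expectation, and such that on the event $\{M_\infty > 0\}$ the objective $\reset(F)$ holds almost surely. Granting these two properties, martingale convergence gives $\expectval(M_\infty) \ge \expectval(M_0) - \eps = v(\state_0) - \eps$, and since $0 \le M_\infty \le 1$ we obtain $\probm_{\mdp,\state_0,\zstrat}(\reset(F)) \ge \probm(M_\infty > 0) \ge \expectval(M_\infty) \ge v(\state_0) - \eps$ simultaneously for each of the finitely many $\state_0 \in \initstates$. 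Thus the whole task reduces to engineering~$\zstrat$ to meet these two points using only one bit.

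First I would set up a region decomposition. After discarding value-$0$ states and restricting to states reachable from~$\initstates$, I partition the remaining state space into regions $R_0, R_1, R_2, \dots$ according to a distance-like rank from~$\initstates$, chosen (using acyclicity) so that along almost every run the region index grows without bound while any fixed inner region is re-entered only with small probability. To each $R_k$ I attach a value-loss budget~$\eps_k$ with $\sum_k \eps_k \le \eps$; crucially this budget is a function of the current state alone (read off its region), so no step-counter is needed to know how carefully to play. The one bit records whether~$F$ has already been visited since the current region was entered: it is reset to~$0$ on crossing a region boundary (detectable from the state) and set to~$1$ the moment an $F$-state is seen.

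Inside a region~$R_k$ the behaviour is as follows. While the bit is~$0$ the strategy plays a memoryless deterministic strategy that heads for~$F$, obtained from the known existence of memoryless $\eps$-optimal strategies for reachability, but tuned so cautiously that the probability of being driven into a value-$0$ part of~$R_k$ is at most~$\eps_k$. Once an $F$-state is reached, the bit flips to~$1$ and the strategy switches to \emph{value-preserving} play: at each controlled state it picks a successor~$\state'$ with $v(\state')$ within a vanishing slack of $v(\state)$---such successors exist because $v(\state)=\sup_{\state\movesto\state'}v(\state')$ at controlled states and $v(\state)=\sum_{\state'}\probp(\state)(\state')v(\state')$ at random states---thereby incurring essentially no further loss while the run is carried to the next region, where the bit is reset and the process repeats. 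The bit is precisely what lets the strategy stop taking risks once the per-region goal (one $F$-visit) is met, confining the loss caused in~$R_k$ to~$\eps_k$. For the analysis, since $v$ is a martingale at random states and the value-preserving phase is an (approximate) martingale at controlled states, the only expected decrease of~$M_t$ comes from the cautious $F$-seeking phases, whose total expected cost is $\le \sum_k \eps_k \le \eps$; this gives the supermartingale-with-bounded-loss property. On $\{M_\infty>0\}$ the run stays bounded away from value~$0$, hence retains a bounded-below chance of an $F$-visit in each region, and since it meets infinitely many regions a conditional Borel--Cantelli argument forces infinitely many $F$-visits almost surely, i.e.\ $\{M_\infty>0\}\subseteq\reset(F)$ up to a null set.

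\textbf{Main obstacle.} The crux is making the two per-region series pull in opposite directions simultaneously, with a single memoryless-within-region strategy: the death risks~$\eps_k$ must be summable (to keep total value loss $\le\eps$), yet the per-region probabilities of an $F$-visit on surviving runs must sum to infinity (to force \Buchi\ on $\{M_\infty>0\}$). This is exactly the balance that \autoref{thm-no-Markov} shows is \emph{unattainable} without extra memory, so the argument must isolate how one bit---marking ``already saw $F$ in this region''---breaks the deadlock. Secondary difficulties, which I expect to absorb into the region setup rather than the core martingale argument, are the handling of infinite branching and the choice of the rank so that runs provably progress through all regions while returns to inner regions stay improbable; this is where acyclicity is used.
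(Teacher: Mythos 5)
Your high-level architecture---finite regions obtained from acyclicity, one bit recording ``$F$ seen since entering the current region,'' summable per-region risk budgets $\eps_k$---is exactly the skeleton of the paper's proof. But the step where you instantiate the bit-$0$ phase has a genuine gap that breaks both halves of your analysis. You propose to play, while the bit is $0$, a memoryless $\eps$-optimal \emph{reachability} strategy toward $F$, ``tuned so cautiously that the probability of being driven into a value-$0$ part of $R_k$ is at most $\eps_k$.'' This does not control the expected loss of $v$: reachability optimality and avoidance of value-$0$ states together still permit the strategy to steer into $F$-states of arbitrarily small \emph{positive} value. Concretely, if a controlled state has two successors, one leading surely to an $F$-state $f_a$ with $v(f_a)=0.99$ and one leading surely to an $F$-state $f_b$ with $v(f_b)=0.01$, both choices are optimal for reachability and neither touches a value-$0$ state, yet choosing $f_b$ loses $0.98$ of value inside a single region. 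Hence $M_t$ need not be a supermartingale with total loss $\le\eps$, so $\expectval(M_\infty)$ can collapse far below $v(\state_0)-\eps$; and the conditional Borel--Cantelli step degrades with it, because the probability that \emph{your} strategy visits $F$ in the next region is tied to the value it has actually preserved, not to the value the run started with. This is not a removable blemish: trading ``visit $F$ now'' against ``keep future value'' is precisely the tension that \autoref{thm-no-Markov} shows cannot be finessed, so a correct proof must couple the two goals explicitly rather than optimize them separately.

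The missing idea, which is the heart of the paper's proof, is to replace reachability by a \emph{bounded total-reward} objective on each finite region: runs that leave the region (or backtrack too far) before seeing $F$ get reward $0$, and a run that first hits $F$ at $\state'$ gets reward equal to the \emph{continuation value} $\valueof{\mdp,R_{\ge i+1}}{\state'}$, i.e., the value of the suffix objective of visiting all later regions' $F$-sets in order. By Puterman's Theorem 7.1.9 such an objective admits an optimal MD strategy on a finite MDP, and stitching these strategies together with a parity-of-region bit yields a strategy that is \emph{exactly optimal} for a strengthened objective $\formula'\subseteq\reset(F)$; $\eps$-optimality for $\reset(F)$ then follows because the original near-optimal strategies already achieve $\formula'$ up to $2\eps$, with an induction over regions replacing your martingale machinery. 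Two further points you would need to repair in any case: your ``reset the bit on crossing a region boundary'' is ambiguous when a run dips back into an inner region (the paper's $i\bmod 2$ convention ensures the bit is not spuriously reset on such re-entries), and your regions must be finite for the MD-strategy existence to apply, so infinite branching cannot be ``absorbed into the region setup''---the paper proves the finitely branching case first and transfers to the general case via an explicit gadget construction.
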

\begin{proof}
  Let $\mdp=\mdptuple$ be an acyclic MDP,
  $\initstates \subseteq \states$ a finite set of initial states and $F \subseteq \states$ a %(possibly infinite)
  set of goal states and $\formula \eqdef \reset(F)$
  denote the \Buchi\ objective w.r.t.\ $F$.
  We prove the claim for finitely branching $\mdp$ first and transfer the result to general MDPs at the end.
  \renewcommand{\optval}[1][\formula]{\valueof{\mdp,#1}{\state}}
For every $\eps >0$ and every $\state \in \initstates$ there exists an $\eps$-optimal strategy $\zstrat_\state$
such that
\begin{equation}\label{inbody:eq:eps-opt}
\probm_{\mdp,\state,\zstrat_\state}(\formula) \ge \optval-\eps.
\end{equation}
However, the strategies $\zstrat_\state$ might differ from each other and
might use randomization and a large (or even infinite)
amount of memory.
We will construct a single deterministic strategy $\zstrat'$ that uses only 1 bit of
memory such that $\forall_{\state \in \initstates}\, \probm_{\mdp,\state,\zstrat'}(\formula) \ge \optval-2\eps$.
This proves the claim as $\eps$ can be chosen arbitrarily small.

In order to construct $\zstrat'$, we first observe the behavior of the
finitely many $\zstrat_\state$ for $\state \in \initstates$
on an infinite, increasing sequence of finite subsets of $\states$.
Based on this, we define a second stronger objective $\formula'$ with
\begin{equation}\label{inbody:eq:prime-implies-normal}
\formula' \subseteq \formula,
\end{equation}
and show that all $\zstrat_\state$ %is $2\eps$-optimal
attain at least $\optval-2\eps$
w.r.t.\ $\formula'$, i.e.,
\begin{equation}\label{inbody:eq:observe-orig}
    \forall_{\state \in \initstates}\, \probm_{\mdp,\state,\zstrat_\state}(\formula') \ge \optval-2\eps.
\end{equation}
We construct $\zstrat'$ as a deterministic 1-bit \emph{optimal} strategy w.r.t.\ $\formula'$
from all $\state \in \initstates$ and obtain
\begin{align*}
\probm_{\mdp,\state,\zstrat'}(\formula)\ &
\ge\ \probm_{\mdp,\state,\zstrat'}(\formula') && \text{by \cref{inbody:eq:prime-implies-normal}} \\
  & \ge\ \probm_{\mdp,\state,\zstrat_\state}(\formula') && \text{by optimality of $\zstrat'$ for $\formula'$}\\
  & \ge \ \optval-2\eps && \text{by \cref{inbody:eq:observe-orig}}.
\end{align*}

\medskip
\noindent
\smallparg{Informal outline: Behavior of $\zstrat_\state$, objective $\formula'$ and properties
  \cref{inbody:eq:prime-implies-normal} and \cref{inbody:eq:observe-orig}.}
For the formal proof see \cref{app:upper}.

Let $\bubble{k}{\initstates}$ be the set of states that are reachable from some
initial state in $\initstates$ within at most $k$ steps.
Since $I$ is finite and $\mdp$ is finitely branching, $\bubble{k}{\initstates}$ is finite for
every~$k$.

We define a sequence of sufficiently large and increasing numbers $k_i$ and
$l_i$ with $k_i < l_i < k_{i+1}$ for $i \in \N$
and finite sets $K_i \eqdef \bubble{k_i}{\initstates}$
and $L_i \eqdef \bubble{l_i}{\initstates}$.
Every run from a $\state \in \initstates$ according to $\zstrat_\state$ must eventually leave each of
these finite sets, because $\mdp$ is acyclic.
Moreover, we choose these numbers so that once a run has left $L_i$ it is
very unlikely to return to $K_i$.
Let $F_{i} \eqdef F \cap K_{i} \setminus L_{i-1}$.
Runs according to $\zstrat_\state$ are very likely to follow a particular pattern.
Let $R_1 \eqdef (K_1 \setminus F_1)^*F_1$,
$R_2 \eqdef (K_2 \setminus F_2)^*F_2$
and
$R_{i+1} \eqdef (K_{i+1} \setminus (F_{i+1} \cup K_{i-1}))^*F_{i+1}$ for
$i \ge 2$.
We show that
\begin{equation}\label{inbody:eq:eps-bound}
    \forall_{\state \in \initstates}\,\probm_{\mdp,\state,\zstrat_\state}(\formula \cap \complementof{R_1 R_2 \ldots R_{i+1} (S \setminus K_i)^\omega})
\quad\le\quad \eps
\end{equation}
We now define the Borel objectives
$R_{\le i} \eqdef R_1 R_2 \dots R_i \states^\omega$ and
$\formula' \eqdef \bigcap_{i \in \N} R_{\le i}$.
Since $F_i \cap F_k = \emptyset$ for $i \neq k$ and $\formula'$ implies a visit
to the set $F_i$ for all $i \in \N$, we have
$\formula' \subseteq \formula$ and obtain \cref{inbody:eq:prime-implies-normal}.
Using \cref{inbody:eq:eps-bound}, we show that
$
\forall_{\state \in \initstates}\,\probm_{\mdp,\state,\zstrat_\state}(\formula') \ge \ \optval-2\eps
$ and thus obtain \cref{inbody:eq:observe-orig}.

\medskip
\noindent
\smallparg{Definition of the 1-bit strategy $\zstrat'$.}
We now define a deterministic 1-bit strategy $\zstrat'$ that is optimal for
objective $\formula'$ from every $\state \in \initstates$.
First we define certain ``suffix'' objectives of $\formula'$.
Recall that $R_i = (K_{i} \setminus (F_{i} \cup K_{i-2}))^*F_{i}$.
Let $R_{i,j} \eqdef R_i R_{i+1} \dots R_j \states^\omega$
and $R_{\ge i} \eqdef \bigcap_{j \ge i} R_{i,j}$.
Consider the
objectives $R_{\ge i+1}$ for runs that start in states $\state' \in F_i$.
For every state $\state' \in F_i$ we consider its value w.r.t.\ the
objective $R_{\ge i+1}$, i.e.,
$\valueof{\mdp,R_{\ge i+1}}{\state'} \eqdef \sup_{\hat{\zstrat}} \probm_{\mdp,\state',\hat{\zstrat}}(R_{\ge i+1})$.
For every $i \ge 1$ we consider the finite subspace $K_i\setminus K_{i-2}$.
In particular, it contains the sets $F_{i-1}$ and $F_i$.
% (For completeness let $K_0 \eqdef F_0 \eqdef \{\state\}$ and $K_{-1} \eqdef \emptyset$.)
We define a bounded total reward
objective $B_i$ for runs starting in $F_{i-1}$ as follows.
Runs that exit the subspace (either by leaving $K_i$ or by visiting $K_{i-2}$)
before visiting $F_i$ get reward $0$. All other runs must visit $F_i$
eventually (since $\mdp$ is acyclic and the subspace is finite).
When some run reaches the set $F_i$ \emph{for the first time}
in some state $\state'$ then this run gets the reward of $\valueof{\mdp,R_{\ge i+1}}{\state'}$.
Using \cite[Theorem 7.1.9]{Puterman:book}, we show that there exists a uniform optimal MD-strategy $\zstrat_i$ for $B_i$ on $K_i\setminus K_{i-2}$ in $\mdp$.

We now define $\zstrat'$ by combining
different MD-strategies $\zstrat_i$, depending on the current state and on the
value of the 1-bit memory.
The intuition is that the strategy $\zstrat'$ has two modes: normal-mode
and next-mode.
In a state $\state' \in K_i \setminus K_{i-1}$, if the memory is $i\pmod 2$
then the strategy is in normal-mode and plays towards reaching $F_i$.
Otherwise, the strategy is in next-mode and plays towards reaching $F_{i+1}$.
%(normally this happens because $F_i$ has already been seen).

Initially $\zstrat'$ starts in a state $\state \in \initstates$ with the 1-bit memory set to
$1$.
We define the behavior of $\zstrat'$ in a state $\state' \in K_i \setminus K_{i-1}$
for every $i \ge 1$.
% \begin{itemize}
% \item
If the 1-bit memory is $i \pmod 2$ and $\state' \notin F_i$ then $\zstrat'$ plays like
$\zstrat_i$.
(Intuitively, one plays towards $F_i$, since one has not yet visited it.)
% \item
If the 1-bit memory is $i \pmod 2$ and $\state' \in F_i$ then the 1-bit memory
is set to $(i+1) \pmod 2$, and $\zstrat'$ plays like $\zstrat_{i+1}$.
(Intuitively, one records the fact that one has already seen $F_i$ and then
targets the next set $F_{i+1}$.)
% \item
If the 1-bit memory is $(i+1) \pmod 2$ then $\zstrat'$ plays like
$\zstrat_{i+1}$.
(Intuitively, one plays towards $F_{i+1}$, since one has already visited $F_i$.)
% \end{itemize}

%\cref{inbody:fig:flipingBit} shows some examples of runs that do (or don't) satisfy $\formula'$.

\begin{figure}[th]
   \begin{center}		
\centering

%% used for path colouring
\makeatletter
\tikzset{
  use path for main/.code={%
    \tikz@addmode{%
      \expandafter\pgfsyssoftpath@setcurrentpath\csname tikz@intersect@path@name@#1\endcsname
    }%
  },
  use path for actions/.code={%
    \expandafter\def\expandafter\tikz@preactions\expandafter{\tikz@preactions\expandafter\let\expandafter\tikz@actions@path\csname tikz@intersect@path@name@#1\endcsname}%
  },
  use path/.style={%
    use path for main=#1,
    use path for actions=#1,
  }
}
\makeatother

\begin{tikzpicture}[>=latex',shorten >=1pt,node distance=1.9cm,on grid,auto,
fshade/.style={draw=none,fill=green!40!white,rotate=0},
bitone/.style={red,very thick},
bitzero/.style={blue, very thick},
lost/.style={densely dotted, ->, line width=1.5pt},
roundnode/.style={circle, draw,minimum size=1.5mm},
squarenode/.style={rectangle, draw,minimum size=2mm},
cross/.style={cross out,  fill=none, minimum size=2*(#1-\pgflinewidth), inner sep=0pt, outer sep=0pt}, cross/.default={1pt}]

% K3 bubble
\begin{scope}
\clip (5.8,0) ellipse (6.1cm and 2.3cm);
\draw[fshade] (-1,12) rectangle (12,0);
\end{scope}
\draw (5.8,0) ellipse (6.1cm and 2.3cm);

% L2 bubble
\draw [fill=white](4.2,0) ellipse (4.5cm and 1.9cm);

% K2 bubble
\begin{scope}
\clip (3.5,0) ellipse (3.8cm and 1.5cm);
\draw[fshade] (-1,10) rectangle (10,0);
\end{scope}
\draw (3.5,0) ellipse (3.8cm and 1.5cm);

% L1 bubble
\draw [fill=white](2.1,0) ellipse (2.4cm and 1.1cm);

% K1 bubble
\begin{scope}
\clip (1.2,0) ellipse (1.5cm and .7cm);
\draw[fshade] (-1,2) rectangle (3,0);
\end{scope}
\draw (1.2,0) ellipse (1.5cm and .7cm);

% horizontal
%\draw[-,green!50!black] (-0.3,0)  --(2.75, 0.05);
%\draw[-,green!50!black] (4.5,0.0)--(7.35, 0.0);
%\draw[-,green!50!black](8.7,0.0) --(11.95,0.0);

% Text
\node[draw=none](dot1) at (13,0)  {{\large $\cdots$}};
\node[draw=none](K1) at (2,-.3)   {$K_1$};
%\node[green!80!black](F1) at (2,.3)   {$F_1$};
\node[draw=none](L1) at (4,-.3)   {$L_1$};
\node[draw=none](K2) at (6.8,-.3) {$K_2$};
\node[draw=none](L2) at (8.3,-.3) {$L_2$};
\node[draw=none](K3) at (11.5,-.3){$K_3$};

\coordinate (s) at (0,0);

% First run
\path[name path=firstrun] plot [smooth] coordinates {
    (s)
    (0.75,-.1)
    (1,0.8)
    %(1.2,.7)
    %(1.7,1.0)
    (2.1,.75)
    (3,1.2)
    (3.8,.5)
    (6,2.1)
    (5.95,0.7)
    (8,2.5)
};
\begin{scope}
    \clip (s) rectangle (1,-0.5);
    \draw[bitzero,use path=firstrun];
\end{scope}
\begin{scope}
    \clip(s) rectangle (2.65,1.2);
    \draw[bitone,use path=firstrun];
\end{scope}
\begin{scope}
    \clip(2.65,-1.2) rectangle (5.54,2);
    \draw[bitzero,use path=firstrun];
\end{scope}
\begin{scope}
    \clip(5.54,2.5) rectangle (6.5,1.15);
    \draw[bitone,use path=firstrun];
\end{scope}
\begin{scope}
    \clip (5.9,1.15) -- (5,0) -- (7,0) -- (9,3) -- (8,3) --cycle;
    \draw[->,lost,use path=firstrun];
\end{scope}
\node[lost] at (8.25,2.75){$\pi_3$};

% Second run
\draw[->,name path=secondrun,bitone] plot [smooth] coordinates {
    (s)
    (0.8,-.3)
    (2,.2)
    (3,-0.75)
    (4.5,0.6)
    (4.25,-0.25)
    (6,-2)
    (6,-0.75)
    (9,-1)
    (11,0.5)
    (7.7,.5)
    (10,2)
};
\begin{scope}
    \clip (0,0) -- (2,0) -- (1,-1.5) --cycle;
    \draw[bitzero,use path=secondrun];
\end{scope}
\begin{scope}
\clip(4.3,0.4) -- (4.25,1.5) -- (8,0) -- (10.9,0) -- (8,-2)-- (5,-2.1) -- (4,-0.5) --cycle;
    \draw[bitzero,use path=secondrun];
\end{scope}
\node[bitone] at (10.5,2.25){$\pi_2$};

%% Third run
\path [name path=thirdrun] plot [smooth] coordinates {
    (s)
    (0.7,-.5)
    (1.5,-0.5)
    (0.5,-2)
};
%\draw[lost,->,use path=thirdrun];
\begin{scope}
\clip (1.2,0) ellipse (1.5cm and .7cm);
    \draw[bitzero,use path=thirdrun];
\end{scope}
\begin{scope}
    \clip (0,-0.7) rectangle (2,-2);
    \draw[lost,->,use path=thirdrun];
\end{scope}
\node[lost] at (0.5,-2.25){$\pi_1$};

\node [roundnode,fill=white] at (s) {$I$};
\end{tikzpicture}
		\end{center}
                \caption{Memory updates along runs $\pi_1,\pi_2,\pi_3$,
                    drawn in blue while the memory-bit is one and in red while the bit is zero.
	            The green region in $K_1$ is $F_1$, and for all $i\geq 2$, the green region in $K_{i}\setminus L_{i-1}$ is $F_i$.
                    Both $\pi_1$ and $\pi_3$ violate $\varphi'$ and are drawn as dotted lines once they do.
}
\label{inbody:fig:flipingBit}
\end{figure}

Observe that if a run according to $\zstrat'$ exits some set $K_i$
(and thus enters $K_{i+1}\setminus K_i$) with the bit still set to $i \pmod 2$
(normal-mode) then this run has not visited $F_i$ and thus does not satisfy the objective
$\formula'$. (Or the same has happened earlier for some $j < i$, in which case
also the objective $\formula'$ is violated.)
An example is the run $\pi_1$ in \cref{inbody:fig:flipingBit}.
However, if a run according to $\zstrat'$ exits some set $K_i$
(and thus enters $K_{i+1} \setminus K_i$) with the bit set to $(i+1) \pmod 2$
(thus $\zstrat_{i+1}$ in next-mode)
then in the new set $K_{i'} \setminus K_{i'-1}$ with $i'=i+1$ the bit is set to
$i' \pmod 2$ and $\zstrat'$ continues to play like $\zstrat_{i+1}$ in normal-mode.
Even if this run returns (temporarily) to $K_i$ (but not to $K_{i-1}$)
the strategy $\zstrat'$ continues to play like $\zstrat_{i+1}$ in next-mode.
An example is the run $\pi_2$ in \cref{inbody:fig:flipingBit}.
Finally, if a run returns to $K_{i-1}$ after having visited $F_i$
then it fails the objective $\formula'$, e.g.,
run $\pi_3$ in \cref{inbody:fig:flipingBit}.

\medskip
\noindent
\smallparg{The 1-bit strategy $\zstrat'$ is optimal for $\formula'$ from every
$\state \in \initstates$.}
Let $\state \in \initstates$ be arbitrary.
For a given run from $\state$, let $\firstinset{F_i}$ be the first state
$\state'$ in $F_i$ that is visited (if any).
We define a bounded reward objective $B_i'$ for runs starting
at $\state$ as follows. Every run that does not satisfy the objective
$R_{\le i}$ gets assigned reward $0$.
Otherwise, consider a run from $\state$ that satisfies $R_{\le i}$.
When this run reaches the set $F_i$ for the first time
in some state $\state'$ then this run gets a reward of
$\valueof{\mdp,R_{\ge i+1}}{\state'}$. Note that this reward is $\le 1$.

We show that for all $i \in \N$
\begin{equation}\label{inbody:eq:Bi-prime}
\valueof{\mdp,\formula'}{\state} = \valueof{\mdp,B_i'}{\state}
\end{equation}
Towards the $\ge$ inequality,
let $\hat{\zstrat}$ be an $\hat{\eps}$-optimal strategy for
$B_i'$ from $\state$.
We define the strategy $\hat{\zstrat}'$ to play like $\hat{\zstrat}$
until a state $\state' \in F_i$ is reached and then to switch to
some $\hat{\eps}$-optimal strategy for objective $R_{\ge i+1}$
from $\state'$.
Every run from $\state$ that satisfies $\formula'$ can be split into parts,
before and after the first visit to the set $F_i$, i.e.,
$\formula' = \{w_1\state'w_2\ |\ w_1\state' \in R_{\le i}, \state' \in F_i,
\state'w_2 \in R_{\ge i+1}\}$.
Therefore we obtain that
$\probm_{\mdp,\state,\hat{\zstrat}'}(\formula') \ge
\expectval_{\mdp,\state,\hat{\zstrat}}(B_i') - \hat{\eps} \ge
\valueof{\mdp,B_i'}{\state} - 2\hat{\eps}$.
Since this holds for every
$\hat{\eps} >0$, we obtain
$\valueof{\mdp,\formula'}{\state} \ge \valueof{\mdp,B_i'}{\state}$.

Towards the $\le$ inequality,
let $\hat{\zstrat}$ be any strategy for
$\formula'$ from $\state$. We have
$\probm_{\mdp,\state,\hat{\zstrat}}(\formula')
\le
\sum_{\state' \in F_i}
\probm_{\mdp,\state,\hat{\zstrat}}(R_{\le i} \cap \firstinset{F_i}=\state')
\cdot \valueof{\mdp,R_{\ge i+1}}{\state'}
=
\expectval_{\mdp,\state,\hat{\zstrat}}(B_i')
$.
Thus $\valueof{\mdp,\formula'}{\state} \le \valueof{\mdp,B_i'}{\state}$.
Together we obtain \cref{inbody:eq:Bi-prime}.

For all $i \in \N$ and every state $\state' \in F_i$ we show that
\begin{equation}\label{inbody:eq:Ri-eq-Bi}
\valueof{\mdp,R_{\ge i+1}}{\state'} = \valueof{\mdp,B_{i+1}}{\state'}
\end{equation}
Towards the $\ge$ inequality,
let $\hat{\zstrat}$ be an $\hat{\eps}$-optimal strategy for
$B_{i+1}$ from $\state' \in F_i$.
We define the strategy $\hat{\zstrat}'$ to play like $\hat{\zstrat}$
until a state $\state'' \in F_{i+1}$ is reached and then to switch to
some $\hat{\eps}$-optimal strategy for objective $R_{\ge i+2}$
from $\state''$. We have that
$\probm_{\mdp,\state',\hat{\zstrat}'}(R_{\ge i+1}) \ge
\expectval_{\mdp,\state',\hat{\zstrat}}(B_{i+1}) - \hat{\eps} \ge
\valueof{\mdp,B_{i+1}}{\state} - 2\hat{\eps}$.
Since this holds for every $\hat{\eps} >0$, we obtain
$\valueof{\mdp,R_{\ge i+1}}{\state'} \ge \valueof{\mdp,B_{i+1}}{\state'}$.

Towards the $\le$ inequality,
let $\hat{\zstrat}$ be any strategy for
$R_{\ge i+1}$ from $\state' \in F_i$.
We have
\begin{align*}
\probm_{\mdp,\state',\hat{\zstrat}}(R_{\ge i+1})
~&\le
\sum_{\state'' \in F_{i+1}}
\probm_{\mdp,\state',\hat{\zstrat}}(R_{i+1}\states^\omega \cap \firstinset{F_{i+1}}=\state'')
\cdot \valueof{\mdp,R_{\ge i+2}}{\state''}\\
 &=
\expectval_{\mdp,\state',\hat{\zstrat}}(B_{i+1}).
\end{align*}
Thus $\valueof{\mdp,R_{\ge i+1}}{\state'} \le \valueof{\mdp,B_{i+1}}{\state'}$.
Together we obtain \cref{inbody:eq:Ri-eq-Bi}.

We show, by induction on $i$, that $\zstrat'$ is optimal for $B_i'$ for
all $i \in \N$ from start state $\state$, i.e.,
\begin{equation}\label{inbody:eq:opt-Bi-prime}
\expectval_{\mdp,\state,\zstrat'}(B_i') = \valueof{\mdp,B_i'}{\state}
\end{equation}
In the base case of $i=1$ we have that $B_1' = B_1$. The strategy $\zstrat'$ plays
$\zstrat_1$ until reaching $F_1$, which is optimal for objective $B_1$ and
thus optimal for $B_1'$.
For the induction step we assume (IH) that $\zstrat'$ is optimal for $B_i'$.
\begin{align*}
\valueof{\mdp,B_{i+1}'}{\state}\ & =\ \valueof{\mdp,B_i'}{\state} && \text{by \cref{inbody:eq:Bi-prime}}\\
                           & =\ \expectval_{\mdp,\state,\zstrat'}(B_i') && \text{by (IH)}\\
& =\ \sum_{\state' \in F_i}
\probm_{\mdp,\state,\zstrat'}(R_{\le i} \cap \firstinset{F_i}=\state')
\cdot \valueof{\mdp,R_{\ge i+1}}{\state'} && \text{by def.\ of $B_i'$}\\
& =\ \sum_{\state' \in F_i}
\probm_{\mdp,\state,\zstrat'}(R_{\le i} \cap \firstinset{F_i}=\state')
\cdot \valueof{\mdp,B_{i+1}}{\state'} && \text{by \cref{inbody:eq:Ri-eq-Bi}}\\
& =\ \sum_{\state' \in F_i}
\probm_{\mdp,\state,\zstrat'}(R_{\le i} \cap \firstinset{F_i}=\state')
\cdot \expectval_{\mdp,\state',\zstrat_{i+1}}(B_{i+1}) && \text{opt.\ of $\zstrat_{i+1}$ for $B_{i+1}$}\\
& =\ \expectval_{\mdp,\state,\zstrat'}(B_{i+1}') && \text{by def.\ of
  $\zstrat'$ and $B_{i+1}'$}
\end{align*}
So $\zstrat'$ attains the value $\valueof{\mdp,B_{i+1}'}{\state}$ of the
objective $B_{i+1}'$ from $\state$ and is optimal. Thus \cref{inbody:eq:opt-Bi-prime}.

Now we show that $\zstrat'$ performs well on the objectives $R_{\le i}$ for
all $i \in \N$.
\begin{equation}\label{inbody:eq:1-bit-val}
\probm_{\mdp,\state,\zstrat'}(R_{\le i}) \ge \valueof{\mdp,\formula'}{\state}
\end{equation}
We have
\begin{align*}
\probm_{\mdp,\state,\zstrat'}(R_{\le i})\ &
\ge\ \expectval_{\mdp,\state,\zstrat'}(B_i') && \text{since $B_i'$ gives rewards
  $0$ for runs $\notin R_{\le i}$ and $\le 1$ otherwise} \\
  & =\ \valueof{\mdp,B_i'}{\state} && \text{by \cref{inbody:eq:opt-Bi-prime}}\\
  & = \  \valueof{\mdp,\formula'}{\state} && \text{by \cref{inbody:eq:Bi-prime}}
\end{align*}
So we get \cref{inbody:eq:1-bit-val}. Now we are ready to prove the optimality of $\zstrat'$ for $\formula'$ from $\state$.
\begin{align*}
\probm_{\mdp,\state,\zstrat'}(\formula')\ &
  =\ \probm_{\mdp,\state,\zstrat'}(\cap_{i \in \N} R_{\le i}) && \text{by def.\ of $\formula'$}\\
  & = \  \lim_{i \to \infty}\probm_{\mdp,\state,\zstrat'}(R_{\le i}) &&
\text{by continuity of measures from above}\\
  & \ge \ \lim_{i \to \infty}\valueof{\mdp,\formula'}{\state} && \text{by \cref{inbody:eq:1-bit-val}}\\
  & = \ \valueof{\mdp,\formula'}{\state}
\end{align*}

\medskip
\noindent
\smallparg{From finitely to infinitely branching MDPs.}
Encode infinite branching into finite branching like
in \cref{fig:infmem-Buchi}, apply the above result to obtain a 1-bit strategy
for the finitely branching version,
and then transform this strategy back into a 1-bit strategy for the original
MDP.
\end{proof}

Now we show our upper bound on the strategy complexity of \Buchi\ for general
MDPs.

\begin{restatable}{ourtheorem}{thmMarkovplusone}
For every countable MDP $\mdp$, finite set of initial states $\initstates$, set of
states $F$ and $\eps>0$, there exists a
deterministic 1-bit Markov strategy for $\reset(F)$
that is $\eps$-optimal
from every $\state \in \initstates$.
\end{restatable}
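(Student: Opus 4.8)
The plan is to reduce the general case to the acyclic case already settled in \autoref{thm:MDP-one-bit-Buchi}, using the standard step-counter product. Given $\mdp=\mdptuple$, I would first build an acyclic MDP $\mdp'$ whose state space is $\states\times\N$, with controlled states $\zstates\times\N$, random states $\rstates\times\N$, transition relation $(\state,n)\transition(\state',n+1)$ whenever $\state\transition\state'$, and probability function $\probp'((\state,n))(\state',n+1)\eqdef\probp(\state)(\state')$. Since the second coordinate strictly increases along every transition, $\mdp'$ is acyclic, and it is clearly countable. Set $F'\eqdef F\times\N$ and $\initstates'\eqdef\initstates\times\{0\}$; note that $\initstates'$ is finite because $\initstates$ is.

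The key observation is that the map $\state_0\state_1\cdots\mapsto(\state_0,0)(\state_1,1)\cdots$ is a bijection between runs of $\mdp$ starting in $\state$ and runs of $\mdp'$ starting in $(\state,0)$, and likewise between partial runs. Since $(\state_i,i)\in F'$ iff $\state_i\in F$, this bijection maps $\reset(F)$ onto $\reset(F')$. Moreover it induces a bijection between arbitrary (history-dependent, randomized) strategies of the two MDPs that preserves the induced probability measures on cylinders, hence on all Borel sets. Consequently $\valueof{\mdp',\reset(F')}{(\state,0)}=\valueof{\mdp,\reset(F)}{\state}$ for every $\state\in\states$.

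Next I would apply \autoref{thm:MDP-one-bit-Buchi} to the acyclic MDP $\mdp'$ with initial set $\initstates'$, goal set $F'$ and the given $\eps$, obtaining a deterministic 1-bit strategy $\zstrat'$ for $\reset(F')$ with $\probm_{\mdp',(\state,0),\zstrat'}(\reset(F'))\ge\valueof{\mdp',\reset(F')}{(\state,0)}-\eps$ for all $\state\in\initstates$. Such a $\zstrat'$ is implemented by some deterministic $\tau'\colon\{0,1\}\times(\states\times\N)\to\dist(\{0,1\}\times(\states\times\N))$ respecting the branching of $\mdp'$. Because the $\N$-coordinate of a state of $\mdp'$ is exactly the number of elapsed steps, I can read $\tau'$ as a function $\tau\colon(\N\times\{0,1\})\times\states\to\dist((\N\times\{0,1\})\times\states)$ in which the $\N$-component of the memory is incremented at each step: let $\tau((n,b),\state)$ place on $((n+1,b'),\state')$ the mass that $\tau'$ places on $(b',(\state',n+1))$ from $(b,(\state,n))$. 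This is precisely a deterministic 1-bit Markov implementation, and the induced strategy $\zstrat$ coincides with $\zstrat'$ under the run bijection. Hence for every $\state\in\initstates$,
\begin{align*}
\probm_{\mdp,\state,\zstrat}(\reset(F))
&=\probm_{\mdp',(\state,0),\zstrat'}(\reset(F'))\\
&\ge\valueof{\mdp',\reset(F')}{(\state,0)}-\eps\\
&=\valueof{\mdp,\reset(F)}{\state}-\eps,
\end{align*}
so $\zstrat$ is the required $\eps$-optimal deterministic 1-bit Markov strategy.

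The construction is entirely routine given \autoref{thm:MDP-one-bit-Buchi}; the only point requiring care is the final translation, namely checking that the step-counter coordinate built into the states of $\mdp'$ is exactly what the $\N$-component of the 1-bit Markov memory tracks, so that the acyclic 1-bit strategy becomes a 1-bit Markov strategy in $\mdp$ with identical run probabilities. I expect this bookkeeping — rather than any genuinely new idea — to be the main (and only minor) obstacle.
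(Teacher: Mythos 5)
Your proposal is correct and follows essentially the same route as the paper: encode a step-counter into the states to get an acyclic MDP, invoke \autoref{thm:MDP-one-bit-Buchi} on it, and translate the resulting deterministic 1-bit strategy back as a deterministic 1-bit Markov strategy via the evident run/strategy correspondence. Your explicit check that the bijection preserves values (so that $\eps$-optimality in $\mdp'$ yields $\eps$-optimality in $\mdp$) is a point the paper leaves implicit, but it is the same argument.
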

\begin{proof}
Encode a step-counter into the states to obtain an acyclic MDP, apply
\cref{thm:MDP-one-bit-Buchi}
to obtain an $\eps$-optimal deterministic 1-bit strategy for it,
and then transform this strategy back into
an $\eps$-optimal deterministic 1-bit Markov strategy in the original MDP.
\end{proof}

%\section{Optimal MD-strategies for $\cParity{\{0,1,2\}}$}\label{as012}
%\input{as012}

% \section{Conclusion}\label{sec:conclusion}
% \input{conclusion}
\bibliography{single-bib-file}%{journals,conferences,autocleaned}

\newpage
\appendix
\section{The Lower Bound: Full Details}\label{app:lower}
\thmnoMarkov*

We follow the proof sketch from the main body and first argue that, in the MDP~$\M$ from \autoref{fig-no-MR}, no MR-strategy achieves a positive probability for the objective of visiting $F$ infinitely often and taking no red transition.
Indeed, given an MR-strategy and a tree, we define two probabilities:
\begin{itemize}
\item $s$ (for ``survival''): the probability that, starting in the top-left brown state, no red transition in the tree is visited;
\item $t$ (for ``total success''): the probability that, starting in the top-left brown state, at least one green state but no red transition in the tree is visited.
\end{itemize}
Trivially, $t \le s$.
A key lemma %for the proof of the first item of \autoref{thm-no-MR}
is the following.
\begin{ourlemma} \label{lem-no-MR-key}
Write $p \defeq 0.7$.
For every MR-strategy~$\sigma$ and every $n \in \N$, the tree $T^n$ satisfies:
\[
s \ \le \ a^{q t n^2}\;,
\]
where $a = 1 - \frac1{n^2 + 1}$ and $q = \frac19 (1-p)$.
\end{ourlemma}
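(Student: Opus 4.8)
The plan is to reduce the statement to a recursion over the self-similar structure of the trees and then to prove it by induction on the height.

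First I would unfold the top block of $T^n$. Writing $p = 0.7$ for the probability that the block routes the token into one of its two controlled (yellow) states, and $a$ for the probability of surviving a single red state, a fixed MR-strategy $\sigma$ is described inside $T^n$ by the two descent probabilities $x,y \in [0,1]$ it uses at the two yellow states, together with the survival/success pairs $(s_1,t_1)$ and $(s_2,t_2)$ of the two subtrees $T^{n-1}$ hanging below. Tracing the flow through the block (the left slot returns to the block, the right slot exits) gives the product recursions
\[
 s = \bigl(1 - px(1 - a s_1)\bigr)\bigl(1 - py(1 - a s_2)\bigr), \qquad
 s - t = \bigl(1 - px(1 - a(s_1 - t_1))\bigr)\bigl(1 - py(1 - a(s_2 - t_2))\bigr),
\]
where the second identity records the probability of surviving while seeing no green state. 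The restriction of $\sigma$ to each subtree is again an MR-strategy, so the inductive hypothesis will apply to $(s_1,t_1)$ and $(s_2,t_2)$ separately, even though the two copies may be played differently.

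The crucial observation is that every red state inside $T^n$ — those at the tops of the blocks and those deep inside the subtrees alike — uses the same parameter $n$, so $a = 1 - \frac1{n^2+1}$ and $p$ are constant throughout $T^n$. Hence I would not induct on $n$ but on the height $h \in \{0,1,\dots,n\}$ of a subtree inside the fixed $T^n$, proving the stronger invariant $s \le a^{q h^2 t}$ for every height-$h$ subtree, which at $h=n$ is exactly the claim. The base case $h=0$ is a single green state with $s = t = 1$ and $a^{q\cdot 0}=1$. Taking logarithms turns the product recursion for $s$ into a sum, and the inductive step amounts to a concrete inequality in $x,y,s_1,t_1,s_2,t_2$ (constrained by two instances of the hypothesis) asserting $-\ln s \ge q h^2 (-\ln a)\, t$.

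The main obstacle is exactly this inductive inequality. A naive per-slot estimate — bounding each factor by $-\ln\bigl(1 - px(1-as_i)\bigr) \ge px(1-as_i)$ and the success crudely by $t \le px\,a t_1 + py\,a t_2$ — only yields an exponent linear in $h$, which is too weak, since for constant $q$ one has $q h^2 \gg h$. The reason the correct exponent is quadratic is that the two slots interact: the product structure (equivalently the cross term in the expansion $t = A_1A_2 - B_1B_2$, with $A_i$, $B_i$ the two slot factors) forces the success probability to saturate and in fact to decay like $1/h$ down the levels, so that a survival loss accumulating only linearly in $h$ nonetheless dominates $q h^2 t$. The technical heart is therefore to retain this interaction rather than discard it, and to optimise over $x,y$ and over the worst-case subtree values permitted by the hypothesis; this optimisation, together with the elementary bounds $a \ge \tfrac12$ and $-\ln a \ge \tfrac1{n^2+1}$, is where the explicit constant $q = \tfrac19(1-p)$ is pinned down. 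Once the invariant is established at $h=n$ the lemma follows immediately, the trivially required $t \le s$ being subsumed by the construction.
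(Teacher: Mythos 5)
Your setup coincides with the paper's: fix $T^n$ (so that $a$, $p$, $q$ are constants throughout), induct on the height $h$ of subtrees, and your two product identities for $s$ and $s-t$ are equivalent to the recursions the paper derives ($s = (1-p_0+p_0as_0)(1-p_1+p_1as_1)$ and the corresponding expansion for $t$). The gap is in the inductive step, and it is fatal as proposed: the invariant you carry, $s \le a^{qh^2t}$, is too weak to close the induction. Consider the asymmetric instance permitted by your hypothesis: at the top of a height-$k$ subtree the strategy never enters the right subtree ($y=0$, hence $p_1=0$), enters the left with probability $p_0=p=0.7$, and the left subtree has $t_0 \approx 0.8$ with $s_0$ exactly at the hypothesis bound $a^{qt_0(k-1)^2}$. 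Then $t = p_0at_0$, the cross term you plan to exploit is absent from the recursion, and linearizing in $\lambda = \ln\frac1a = O(1/n^2)$ the step requires $1 + qt_0(k-1)^2 \ge aqt_0k^2$, i.e.\ essentially $1 \ge qt_0(2k-1)$, which fails for $k \ge 20$ once $n$ is large (e.g.\ $k=30$ gives $qt_0(2k-1) \approx 1.57$). So no optimisation over $x,y$ and over hypothesis-permitted subtree values can close the step: a permitted worst case genuinely violates it. (The statement itself remains true at every level; it is the induction that does not close — the classic situation calling for a strengthened hypothesis.)

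The intuition you offer to justify the quadratic exponent — that the product structure forces $t$ to decay like $1/h$ down the levels — is false, and this is exactly why your plan breaks. The recursion $t \le p_0at_0 + p_1at_1 - p_0p_1a^2t_0t_1$ has, for $p_0=p_1=0.7$ and $a \approx 1$, a stable fixed point near $0.82$: the success probability saturates at a positive constant and does not vanish (this matches the paper's greedy analysis, where the probability of reaching a green state tends to $1-\bigl(\frac{1-p}{p}\bigr)^2$). The paper's resolution is to strengthen the induction hypothesis to $s \le a^{(qt+\frac12qt^2)h^2}$. That extra quadratic term is precisely the device that "retains the interaction'': at the parent level the cross term produced by expanding $\frac12qt^2$ cancels against the negative cross term of the $t$-recursion, while in the asymmetric case above the $\frac12qt_i^2$ supplied by the hypothesis provides the needed quadratic surplus (its coefficient $\frac12$ exceeds the required $\frac12p_i + \frac94 q \le 0.425$), and the final inequality collapses to $\bigl(\frac16(1-p)t_ik - 1\bigr)^2 \ge 0$ — which is also what pins down $q = \frac19(1-p)$. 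Without this (or an equivalent) strengthening of the invariant, your induction cannot be completed.
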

\begin{proof}
Fix any MR-strategy~$\sigma$ and any $n \in \N$.
For each $k \in \{0, \ldots, n\}$, the tree~$T^n$ has $2^{n-k}$ height-$k$ subtrees, for which we can define $s, t$ analogously.
We claim: for all $k \in \{0, \ldots, n\}$ the probabilities $s, t$ in every height-$k$ subtree of~$T^n$ satisfy
\begin{equation} \label{eq-lem-no-MR-key-induction}
 s \le a^{(q t + \frac12 q t^2) k^2}\;,
\end{equation}
where $a = 1 - \frac1{n^2 + 1}$ and $q = \frac19 (1-p)$.
Note that the claim (for $k=n$) implies the lemma.

We prove the claim by induction on~$k$.
For the base case, $k=0$, note that each height-$0$ subtree of~$T^n$ consists of only a single green state.
Hence $s = t = 1$, so the claim holds for $k=0$.
For the inductive step, let $k \in \{1, \ldots, n\}$ and consider a height-$k$ subtree, say~$T$, of~$T^n$.
Let $T_0, T_1$ be the left and the right subtree of~$T$, respectively; they have height $k-1$.
In the two (yellow) topmost controlled states in~$T$, the MR-strategy~$\sigma$ chooses probabilities to visit $T_0, T_1$, respectively.
Taking into account the two brown random states at the top, the probabilities to visit $T_0, T_1$ are $p_0, p_1 \le p$, respectively.
In $T_0, T_1$, the strategy~$\sigma$ employs MR-strategies that achieve probabilities $s_0, t_0$ and $s_1, t_1$, respectively, where $s_i, t_i$ are defined in the obvious way for $T_i$.
By the induction hypothesis we have
\begin{equation} \label{eq-lem-no-MR-key-IH}
s_i \ \le \ a^{q t_i (1 + \frac12 t_i) (k-1)^2} \qquad \text{for $i \in \{0,1\}$.}
\end{equation}
By the structure of the MDP~$\M$ we have:
\begin{align}
s \ &= \ (1 - p_0 + p_0 a s_0) (1 - p_1 + p_1 a s_1) \label{eq-lem-no-MR-key-s-recursive} \\
t \ &= \ p_0 a t_0 (1 - p_1 + p_1 a s_1) + p_1 a t_1 (1 - p_0 + p_0 a s_0) - p_0 a t_0 p_1 a t_1  \nonumber \\
    &\le \ p_0 a t_0 + p_1 a t_1 - p_0 a t_0 p_1 a t_1 \label{eq-lem-no-MR-key-t-recursive}
\end{align}
By combining \Cref{eq-lem-no-MR-key-IH,eq-lem-no-MR-key-s-recursive} we obtain:
\begin{equation} \label{eq-lem-no-MR-key-lhs}
s \ \le \ \prod_{i=0}^1 \left(1 - p_i + p_i a^{1 + q t_i (1 + \frac12 t_i) (k^2-2k)}\right) %\left(1 - p_1 + p_1 a^{1 + q t_1 (1 + \frac12 t_1) (k^2-2k)}\right)
\end{equation}
On the other hand, from~\cref{eq-lem-no-MR-key-t-recursive} we obtain:
\begin{equation} \label{eq-lem-no-MR-key-rhs1}
\begin{aligned}
q t + \frac12 q t^2 \ &\le \ q \left( p_0 t_0 + p_1 t_1 - p_0 a t_0 p_1 a t_1 \right) + \frac12 q \left( p_0 a t_0 + p_1 a t_1 \right)^2 \\
&\le \ p_0 q t_0 \left(1 + \frac12 p_0 t_0 \right) + p_1 q t_1 \left(1 + \frac12 p_1 t_1 \right)
\end{aligned}
\end{equation}
Further we have:
\begin{equation} \label{eq-lem-no-MR-key-ln1a}
\ln \frac{1}{a} \ \le \ \frac1{a} - 1 \ = \ \frac{n^2 + 1}{n^2} - 1 \ = \ \frac{1}{n^2} \ \le \ \frac{1}{k^2}
\end{equation}
Let $i \in \{0,1\}$.
By~\cref{eq-lem-no-MR-key-ln1a} we have:
\begin{equation*} \label{eq-lem-no-MR-key-range}
\left(\ln \frac1{a}\right) p_i q t_i \left(1 + \frac12 p_i t_i \right) k^2 \ \le \ q \left(1 + \frac12\right) \ \le \ \frac19 \cdot \frac32 \ < \ \frac12
\end{equation*}
Hence, using a bound on the exponential function (\autoref{lem-linear-exponential} below), %on page~\pageref{lem-linear-exponential}),
we obtain:
\begin{align*}
a^{p_i q t_i \left(1 + \frac12 p_i t_i \right) k^2} \
&= \ e^{- p_i (\ln \frac1{a}) q t_i \left(1 + \frac12 p_i t_i \right) k^2} \\
& \ge \ 1 - p_i + p_i e^{-(\ln \frac1{a}) q t_i \left(1 + \frac12 p_i t_i \right) k^2 - (\ln \frac1{a})^2 q^2 t_i^2 \left(1 + \frac12 p_i t_i \right)^2 k^4} \\
& \mathop{\ge}^\text{\cref{eq-lem-no-MR-key-ln1a}} \ 1 - p_i + p_i e^{-(\ln \frac1{a}) q t_i \left(1 + \frac12 p_i t_i \right) k^2 - (\ln \frac1{a}) \frac94 q^2 t_i^2 k^2} \\
&= \ 1 - p_i + p_i a^{q t_i k^2 + q \left(\frac12 p_i + \frac94 q \right) t_i^2 k^2}
%\intertext{By combining this inequality and the analogously proved inequality
%}
%a^{p_1 q t_1 \left(1 + \frac12 p_1 t_1 \right) k^2} \ &\ge \ 1 - p_1 + p_1 a^{q t_1 k^2 + q \left(\frac12 p_1 + \frac94 q \right) t_1^2 k^2}
\end{align*}
By combining this inequality with \cref{eq-lem-no-MR-key-rhs1} we obtain:
\[
a^{\left(q t + \frac12 q t^2\right) k^2} \ \ge \ \prod_{i=0}^1 \left(1 - p_i + p_i a^{q t_i k^2 + q \left(\frac12 p_i + \frac94 q \right) t_i^2 k^2}\right) %\left( 1 - p_1 + p_1 a^{q t_1 k^2 + q \left(\frac12 p_1 + \frac94 q \right) t_1^2 k^2} \right)
\]
Considering~\cref{eq-lem-no-MR-key-lhs}, we see that, in order to prove~\cref{eq-lem-no-MR-key-induction}, it suffices to prove
\begin{align*}
1 + q t_i \left(1 + \frac12 t_i\right) (k^2-2k) \ &\ge \ q t_i k^2 + q \left(\frac12 p_i + \frac94 q \right) t_i^2 k^2 \qquad \text{for }i \in \{0,1\}. \label{eq-lem-no-MR-key-suffices1}
\end{align*}
%Let $i \in \{0,1\}$.
This inequality %inequality~\cref{eq-lem-no-MR-key-suffices1}
is equivalent to:
\begin{alignat*}{2}
&  & 1 + q t_i k \left( \left( \frac12 - \frac12 p_i - \frac94 q \right) t_i k - 2 \left( 1 + \frac12 t_i \right) \right)  \ &\ge \ 0 \\
&\Longleftarrow\quad& 1 + q t_i k \left( \left( \frac12 (1 - p) - \frac94 q \right) t_i k - 3 \right) \ &\ge \ 0 \\
&\Longleftrightarrow\quad& 1 + \frac19 (1-p) t_i k \left( \frac14 (1 - p) t_i k - 3 \right) \ &\ge \ 0 \\
&\Longleftrightarrow\quad&  \left( \frac16 (1-p) t_i k - 1 \right)^2  \ &\ge \ 0
\end{alignat*}
The left-hand side is a square, hence nonnegative.
This completes the induction proof.
\end{proof}

The following elementary lemma from calculus was used in the proof of \autoref{lem-no-MR-key}.
\begin{ourlemma} \label{lem-linear-exponential}
For every $r \ge 0$ and $x \in [0, \frac12]$ we have $e^{- r x} \ge 1 - r + r e^{-x -x^2}$.
\end{ourlemma}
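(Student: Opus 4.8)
The plan is to fix $x \in [0,\tfrac12]$ and read the claim as a one-variable statement in $r$. Define
\[
F(r) \defeq e^{-rx} - \bigl(1 - r + r\,e^{-x-x^2}\bigr), \qquad r \ge 0,
\]
so that the lemma is precisely $F(r)\ge 0$ for all $r\ge 0$. The point is that $F$ is convex in $r$: its only nonlinear summand is $e^{-rx}$, and indeed $F''(r)=x^2 e^{-rx}\ge 0$, while the rest, $-(1-r+r\,e^{-x-x^2})$, is affine in $r$. Since $F(0)=1-1=0$, the supporting-line inequality for convex differentiable functions gives $F(r)\ge F(0)+F'(0)\,r=F'(0)\,r$ for all $r\ge 0$. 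Hence it suffices to show that the tangent at the origin has nonnegative slope, i.e.\ $F'(0)\ge 0$, and the whole statement follows for every $r\ge 0$.

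Differentiating, $F'(r) = -x\,e^{-rx} + 1 - e^{-x-x^2}$, so that $F'(0) = (1-x) - e^{-x-x^2}$. Thus the lemma reduces to the single elementary inequality
\[
e^{-x-x^2} \ \le\ 1-x \qquad \text{for } x\in[0,\tfrac12],
\]
equivalently $\ln(1-x) \ge -x-x^2$ on the same interval.

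I would settle this last inequality by a second monotonicity argument rather than a Taylor truncation. Set $\phi(x) \defeq \ln(1-x) + x + x^2$, note $\phi(0)=0$, and compute
\[
\phi'(x) \ =\ \frac{-1}{1-x} + 1 + 2x \ =\ \frac{x(1-2x)}{1-x}.
\]
For $x\in[0,\tfrac12]$ each of the factors $x$, $1-2x$, $1-x$ is nonnegative, so $\phi'\ge 0$; hence $\phi$ is nondecreasing and $\phi(x)\ge\phi(0)=0$, which is exactly $\ln(1-x)\ge -x-x^2$. This yields $F'(0)\ge 0$ and closes the argument. There is no genuine obstacle here; the only mild subtlety worth flagging is that $1-x\ge e^{-x-x^2}$ fails to come out of the crudest second-order estimate of $\ln(1-x)$ uniformly all the way to $x=\tfrac12$, which is why I prefer the exact sign analysis of $\phi'$ over bounding the tail of the logarithmic series.
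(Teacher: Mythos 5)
Your proof is correct, and it reaches the paper's conclusion by a slightly different reduction, so a comparison is worthwhile. The paper keeps both variables and applies $1+y\le e^y$ to the right-hand side, writing $1-r+re^{-x-x^2}=1-r\left(1-e^{-x-x^2}\right)\le e^{-r\left(1-e^{-x-x^2}\right)}$, so that comparing exponents reduces the lemma to $x\le 1-e^{-x-x^2}$. You instead fix $x$, view the difference $F(r)$ as a convex function of $r$ with $F(0)=0$, and use the tangent-line inequality at $r=0$ to reduce to $F'(0)\ge 0$, i.e.\ $e^{-x-x^2}\le 1-x$ --- exactly the same scalar inequality. From there the two proofs coincide: your sign analysis of $\phi'(x)=\frac{x(1-2x)}{1-x}$ is precisely the paper's final step (the paper invokes the fundamental theorem of calculus rather than monotonicity, an immaterial difference). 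The two reductions are really one idea in different clothing, since $1+y\le e^y$ is itself the tangent-line inequality for the convex function $e^y$ at $y=0$: the paper applies it to the right-hand side, whereas you apply convexity to the difference in the variable $r$. Your version is arguably more systematic --- it makes explicit that the problem is one-dimensional in $r$ and that the only issue is the slope at the origin --- while the paper's is a shorter direct chain of inequalities; neither approach yields extra generality.
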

\begin{proof}
Let $r \ge 0$ and $x \in [0, \frac12]$.
As $1+y \le e^y$ holds for all~$y$, we have:
\[
1 - r + r e^{-x -x^2} \ = \ 1 - r \left(1 - e^{-x - x^2}\right) \ \le \ e^{-r \left(1 - e^{-x-x^2}\right)}
\]
Hence it suffices to prove that $x \le 1 - e^{-x - x^2}$, which is equivalent to $\ln(1-x) + x + x^2 \ge 0$.
To prove the latter inequality, define $f(y) \defeq \ln(1-y) + y + y^2$.
Then we have $f(0) = 0$ and
\[
f'(y) \ = \ -\frac1{1-y} + 1 + 2 y \ = \ \frac{-1 + 1 - y + 2y - 2y^2}{1-y} \ = \ \frac{y (1-2 y)}{1-y} \ \ge \ 0 \ \text{ for } y \in \left[0, \frac12\right]. %\quad\text{for $x \in \left[0, \frac12\right]$\,.}
\]
By the fundamental theorem of calculus, it follows $f(x) = f(0) + \int_0^x f'(y) \; d y \ge 0$.
\end{proof}

%We are ready to prove \autoref{thm-no-MR}.
%\thmnoMR*
\begin{restatable}{ourlemma}{thmnoMR} \label{thm-no-MR}
Consider the acyclic MDP~$\M$ shown in \autoref{fig-no-MR}.
Let $\formula$ be the objective of visiting infinitely many green states and no red transition.
\begin{enumerate}
\item
For every MR-strategy~$\zstrat$, we have $\probm_{\mdp,\state_0,\zstrat}(\formula) = 0$.
\item
$\valueof{\formula}{\state_0} = 1$ and for every $\eps>0$
there exists a deterministic 1-bit strategy~$\zstrat_\eps$ s.t.\ $\probm_{\mdp,\state_0,\zstrat_\eps}(\formula) \ge 1-\eps$.
\end{enumerate}
\end{restatable}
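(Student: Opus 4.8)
The plan is to track, for any fixed strategy and each tree $T^n$, three probabilities measured from the top-left brown root of $T^n$: the survival probability $s_n$ (no red transition in $T^n$), the total-success probability $t_n$ (a green state but no red transition in $T^n$), and the death probability $d_n=1-s_n$ (some red transition in $T^n$); note $t_n\le s_n$. Since the blue chain enters $T^n$ with probability $\frac1n$ and otherwise advances, passing the $n$-th stage without dying has probability $(1-\frac1n)+\frac1n s_n=1-\frac1n d_n$, so the probability of \emph{never} taking a red transition equals $\prod_{n\ge 1}(1-\frac1n d_n)$, which is positive iff $\sum_n\frac1n d_n<\infty$. Because a red transition sends the run into a green-free losing chain, seeing green infinitely often already forces taking no red transition; hence the objective $\formula$ is exactly the event of seeing green infinitely often.

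For part~(1), the first step is to convert \autoref{lem-no-MR-key} into the key inequality $d_n\ge c\,t_n$ for an absolute constant $c>0$. Indeed $n^2\ln\frac1a\ge\frac{n^2}{n^2+1}\ge\frac12$, so $s_n\le a^{q t_n n^2}=e^{-q t_n n^2\ln\frac1a}\le e^{-\frac12 q t_n}$, and then $d_n=1-s_n\ge 1-e^{-\frac12 q t_n}\ge\frac14 q\,t_n$ using $1-e^{-x}\ge\frac{x}{1+x}$ and $t_n\le 1$. Now I would fix an arbitrary MR-strategy and split into two cases. If $\sum_n\frac1n t_n<\infty$, the events $W_n=$``enter $T^n$, visit green, and survive $T^n$'' have $\probm(W_n)\le\frac1n t_n$, so by the first Borel--Cantelli lemma only finitely many occur almost surely; since seeing green infinitely often is contained in $\limsup_n W_n$, we get $\probm_{\M,\state_0,\zstrat}(\formula)=0$. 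If instead $\sum_n\frac1n t_n=\infty$, then $d_n\ge c\,t_n$ forces $\sum_n\frac1n d_n=\infty$, so $\prod_n(1-\frac1n d_n)=0$ and a red transition is taken almost surely, again giving $\probm(\formula)=0$. Either way every MR-strategy scores~$0$.

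For part~(2), define $\zstrat_1$ to play greedily (downward at the yellow states) while its bit is $0$, and, as soon as a green state of the current tree is seen, to flip the bit to $1$ and play safe (upward, no red risk) for the remainder of that tree; the bit is reset to $0$ on entering each tree. The decisive structural consequence is that within a single tree every red transition of $\zstrat_1$ occurs strictly before the first green, so green and red are mutually exclusive in each $T^n$. Two estimates then remain. On the green side, up to the first green $\zstrat_1$ agrees with the greedy strategy, so the probability of seeing some green in $T^n$ is identical for both and, by an induction on subtree height, equals $1-u_n\ge 1-u^*$ where $u^*<1$ is the fixed point of $u\mapsto(1-p+pu)^2$. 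On the red side, let $M_n$ count the red states visited before stopping (first green or clean exit) when red transitions are disabled; a one-step recursion $m_k=c_k(1+m_{k-1})$ with $c_k=p(2-p(1-u_{k-1}))$, together with the cancellation $c^*=p(1+v^*)=1$ at the fixed point $v^*=\frac{1-p}{p}$, yields $c_k\le 1$ and hence $E[M_n]\le n$; a union bound over these $\le n$ red states, each of risk $\frac1{n^2+1}$, gives a red risk $d_n'\le\frac{n}{n^2+1}\le\frac1n$ under $\zstrat_1$, so $\sum_n\frac1n d_n'<\infty$.

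Finally I would let $\zstrat_\eps$ play safe in $T^1,\dots,T^k$ and like $\zstrat_1$ thereafter. Then the probability of ever taking a red transition is at most $\sum_{n>k}\frac1n d_n'\le\sum_{n>k}\frac1{n^2+1}\le\eps$ for $k$ large, i.e.\ $\zstrat_\eps$ survives with probability $\ge 1-\eps$; and conditioned on survival the tree-wise green indicators have conditional means $\frac1n t_n'\ge\frac1n(1-u^*)$ whose sum diverges, so by L\'evy's conditional Borel--Cantelli lemma green is seen infinitely often almost surely on the survival event. As survival and ``green infinitely often'' coincide up to a null set under $\zstrat_\eps$, $\probm_{\M,\state_0,\zstrat_\eps}(\formula)$ equals the survival probability, which is $\ge 1-\eps$; letting $\eps\to 0$ gives $\valueof{\formula}{\state_0}=1$. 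The main obstacle is the pair of matching bounds at the harmonic borderline: the lower bound $d_n\ge c\,t_n$ (the technical heart, already isolated as \autoref{lem-no-MR-key}) and the upper bound $d_n'\le\frac1n$, the latter hinging on the non-obvious identity $c^*=1$ that keeps $E[M_n]$ linear rather than exponential in $n$; the conceptual trick that makes the upper bound possible is using one bit to render green and red mutually exclusive per tree, converting the greedy strategy's fatal green visits into safe ones without lowering the per-tree green probability.
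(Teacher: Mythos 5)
Your proposal is correct and takes essentially the same approach as the paper's proof: part~(1) turns \autoref{lem-no-MR-key} into the key inequality $d_n \ge \frac{1}{4} q\, t_n$ and runs the same convergence/divergence dichotomy for $\sum_n \frac{1}{n} t_n$ (first Borel--Cantelli in one case, vanishing of the survival product $\prod_n (1-\frac1n d_n)$ in the other, which is just a restructuring of the paper's contrapositive argument), and part~(2) uses the same down-until-first-green-then-up 1-bit strategy, the same fixed-point bound $u_n \le ((1-p)/p)^2 < 1$ giving green states infinitely often, and the same linear bound on expected visited red states (the paper's induction $v_n \le n$ is exactly your $c_k \le 1$ cancellation), followed by playing safe in the first $k$ trees. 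The only slip is cosmetic: in $\M$ as drawn in \autoref{fig-no-MR}, red transitions lead to the same targets as black ones (the redirection into losing chains happens only later, in the proof of \autoref{thm-no-Markov}), so $\formula$ is not literally ``seeing green infinitely often''; this is harmless because every inclusion you actually use, such as $\formula \subseteq \limsup_n W_n$ and the identity $\probm(\formula)=\probm(\text{survival})$ under $\zstrat_\eps$, explicitly retains the no-red requirement.
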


\begin{proof}
First we prove item~1.
Fix any MR-strategy~$\sigma$.
For each $n \in \N$, let $s_n, t_n$ denote the probabilities $s,t$ for the tree~$T^n$ under~$\sigma$.
Define also $d_n \defeq 1 - s_n$ (for ``death''), which is the probability of taking at least one red transition starting in the top-left brown state of~$T^n$.
For the following estimate, observe that we have
\begin{equation} \label{eq-thm-no-MR-euler}
\left(1 - \frac1{x+1}\right)^x \ = \ e^{x \ln\left(1 - \frac1{x+1}\right)} \ \le \ e^{-\frac{x}{x+1}} \ \le \ e^{-\frac12} \qquad \text{for $x \ge 1$.}
\end{equation}
By \autoref{lem-no-MR-key} we have for every~$n$:
\begin{equation} \label{eq-thm-no-MR-death}
d_n \ = \ 1 - s_n \ \ge \ 1 - \left( 1 - \frac1{n^2 + 1} \right)^{q t_n n^2} \ \mathop{\ge}^\text{\cref{eq-thm-no-MR-euler}} \ 1 - e^{-\frac12 q t_n} \ \ge \ \frac14 q t_n \;,
\end{equation}
where the last inequality follows from the fact that $e^{-x} \le 1 - \frac12 x$ holds for $x \in [0, 1]$.

Denote by~$G_n$ the indicator random variable such that
\begin{itemize}
\item $G_n = 1$ if the top-left brown state of~$T^n$ is visited (coming from the previous blue state) and at least one green state in~$T^n$ but no red transition in~$T^n$ is visited;
\item $G_n = 0$ otherwise.
\end{itemize}
Considering that the probability of visiting the top-left brown state of~$T^n$ is $\frac1{n}$, we have ${\expectval} G_n = \frac1{n} \cdot t_n$, where ${\expectval}$ denotes expectation.

If $\sigma$ visits at least one red transition in~$\M$ almost surely then the probability of~$\formula$ is~$0$.
Therefore, suppose $\sigma$ achieves a positive probability, $\bar{r} > 0$, of visiting no red transition.
Since $0 < \bar{r} =  \prod_{n=1}^\infty \left(1 - \frac1{n} \cdot d_n\right)$, the series $\sum_{n=1}^\infty \frac1{n} \cdot d_n$ converges.
Thus:
\begin{align*}
  \expectval \sum_{n=1}^\infty G_n \ = \ \sum_{n=1}^\infty \expectval G_n \ = \ \sum_{n=1}^\infty \frac1{n} \cdot t_n \ \mathop{\le}^\text{\cref{eq-thm-no-MR-death}} \ \frac{4}{q} \cdot \sum_{n=1}^\infty \frac1{n} \cdot d_n \ < \ \infty
\end{align*}
It follows that the probability that $\sum_{n=1}^\infty G_n$ diverges is~$0$.
But on~$\formula$ the series $\sum_{n=1}^\infty G_n$ diverges.
Hence the probability of~$\formula$ is~$0$.
This completes the proof of item~1.

Towards item~2, we first define a suitable strategy, $\sigma$,
that achieves a positive value (i.e.,
$\probm_{\mdp,\state_0,\sigma}(\formula)> 0$)
and then improve it to obtain $\eps$-optimal strategies $\zstrat_\eps$.

The strategy~$\sigma$ acts independently in each tree~$T^n$.
In each tree $T^n$ the strategy~$\sigma$ maximizes the probability of visiting exactly one green state.
To this end, as long as $\sigma$ has not yet visited a green state in~$T^n$, it chooses the downward transition emanating from the yellow controlled states; as soon as a green state in~$T^n$ has been visited, $\sigma$ chooses the upward transition emanating from the yellow controlled states, thus avoiding any further visit of a green state or a red transition in~$T_n$.
This is a 1-bit strategy, as $\sigma$ remembers only whether a green state has already been visited in the current tree~$T^n$.
The bit is reset whenever a new tree is entered.

Next we show that $\sigma$ visits infinitely many green states with probability~$1$.
Let $u_n$ denote the probability that, starting in the top-left brown state of~$T^n$, no green state is visited in~$T^n$.
Define $u_0 = 0$.
Since red or non-red transitions are unimportant for the current considerations, all height-$n$ trees in~$\M$ have the same structure, even when they are subtrees of different~$T^m$.
Therefore we have:
\[
 u_n \ = \ (p u_{n-1} + 1 - p)^2
\]
Since the function $f(x) \defeq (p x + 1 - p)^2$ is monotone on $[0,1]$, the sequence $(u_n)_n$ is nondecreasing and thus converges to the smaller fixed point, $u$, of~$f$.
Hence,
\begin{equation} \label{eq-u-fixed-point}
0 \ \le \ u_n \ \le \ u \ = \ f(u) \ = \ \left(\frac{1-p}{p}\right)^2 \ < \ 1 \qquad \text{for all }n \in \N \cup \{0\}.
\end{equation}
It follows that we have
\begin{equation} \label{eq-inf-green}
 \sum_{n=k}^\infty \frac{1}{n} (1-u_n) \ \ge \ (1-u) \sum_{n=k}^\infty \frac1{n} \ = \ \infty \qquad \text{for all $k \in \N$.}
\end{equation}
For every $k \in \N$, the probability that, starting in the blue state directly before~$T^k$, no green state in $T^k, T^{k+1}, \ldots$ is visited is
\[
\prod_{n=k}^\infty \left( \frac1{n} \cdot u_n + \left(1 - \frac1{n}\right) \right) \ = \ \prod_{n=k}^\infty \left(1 - \frac{1}{n} (1-u_n) \right) \ \mathop{=}^\text{by \cref{eq-inf-green}} \ 0\,.
\]
%where the last equality follows from~\cref{eq-inf-green}.
It follows that $\sigma$ visits infinitely many green states with probability~$1$.

It now suffices to show that, with positive probability, $\sigma$ visits no red transition.
Let $v_n$ denote the expectation, starting in the top-left brown state of~$T^n$, of the number of red \emph{states} (not red \emph{transitions}) that are visited in~$T^n$.
%Let $v_n'$ denote the expectation, starting in the top-right brown state of~$T^n$ and assuming that no green state has been visited in the left subtree of~$T^n$, of the number of red states that are visited in the right subtree of~$T^n$.
Define $v_0 \defeq 0$.
Since red or non-red transitions are unimportant for the current considerations, all height-$n$ trees in~$\M$ have the same structure, even when they are subtrees of different~$T^m$.
Therefore we have:
\begin{align} \label{eq-red-states-vn}
 v_n \ &= \ p \left(1 + v_{n-1} + u_{n-1} p (1 + v_{n-1}) \right) \ + \ (1-p) p (1 + v_{n-1})
\end{align}
We prove by induction that $v_n \le n$ holds for all $n \in \N \cup \{0\}$.
The base case, $n=0$, holds by the definition of~$v_0$.
For the inductive step, let $n \ge 1$.
We have:
\begin{align*}
 v_n \ &\le \ p (n + u_{n-1} p n) + (1-p) p n && \text{by~\cref{eq-red-states-vn} and the induction hypothesis}\\
       &\le \ p \left( n + \frac{(1-p)^2}{p} n\right) + (1-p) p n && \text{by~\cref{eq-u-fixed-point}} \\
       &= \ p n + (1-p)^2 n + p n - p^2 n \ = \ n
\end{align*}
Hence we have proved $v_n \le n$.
It follows that the expectation, starting in the top-left brown state of~$T^n$, of the number of red \emph{transitions} visited in~$T^n$ is at most $n \cdot \frac{1}{n^2 + 1}$.
Thus the expected number of visited red transitions in the whole MDP~$\M$ is at most $\sum_{n=1}^\infty \frac1{n} \cdot n \cdot \frac{1}{n^2 + 1} \le \frac{\pi^2}{6}$.
Hence there is $k \in \N$ such that the expected number of red transitions visited in $T^k, T^{k+1}, \ldots$ is less than~$1$.
It follows from the Markov inequality that the probability to visit at least one red transition in $T^k, T^{k+1}, \ldots$ is less than~$1$.
Hence the probability to visit at least one red transition in~$\M$ is less
than~$1$.

The strategy~$\sigma$ from above can be improved
to obtain an $\eps$-optimal strategy $\zstrat_\eps$ for \Buchi\ from
$\state_0$,
i.e., $\probm_{\mdp,\state_0,\zstrat_\eps}(\formula) \ge 1-\eps$.
We obtain $\zstrat_\eps$ by modifying the described strategy~$\sigma$ such
that, in the first $k$ trees for some $k \in \N$, the upward transitions emanating from the yellow states are taken.
By choosing a large but finite~$k$, the risk of taking a red transition can be made arbitrarily small, while the probability of visiting infinitely many green states remains~$1$.
\end{proof}

%We strengthen \autoref{thm-no-MR} by showing the following:
Finally, we are ready to prove our main claim, \cref{thm-no-Markov}.
\begin{proof}[Proof of \cref{thm-no-Markov}]
We describe how to modify the MDP~$\M$ from \autoref{thm-no-MR} to obtain an MDP~$\M_2$ with the claimed properties.
First eliminate the red transitions in~$\M$ and change the objective to the normal \Buchi\ objective.
This can be done by redirecting all red transitions to an infinite (losing) chain of non-green states.
Denote the resulting MDP by $\M_1$.
For a state~$s$, define its \emph{depth}~$d(s)$ as the length of the \emph{longest} path from the start state $s_0$ to~$s$.
In~$\M_1$ each state has finite depth (this property does not follow from acyclicity alone).
Now obtain~$\M_2$ from $\M_1$ by replacing every transition that leads from a state $s_1$ to a state $s_2$ with $d(s_1) + 1 < d(s_2)$ by a chain (of non-green states) of length $d(s_2) - d(s_1)$.
In this way, in~$\M_2$, for every state~$s$, all paths from $s_0$ to~$s$ have the same length $d(s)$.
Thus, instrumenting $\M_2$ with a step-counter would lead to an MDP isomorphic to~$\M_2$.
It follows that every Markov strategy for~$\M_2$ could be replaced by an MR-strategy that achieves $\reset(F)$ with the same probability.
% PT: We are being sloppy here:
Observe that a MR-strategy for $M_2$ directly translates to an MR-strategy for $M$ that achieves the same probability.
Hence, item~$1$ follows, as the existence of a Markov-, and hence MR-strategy that achieves positive probability would contradict \autoref{thm-no-MR}.

Item~$2$ is shown by modifying the strategies~$\zstrat_\eps$ from item~$2$
of \autoref{thm-no-MR} in the natural way.
\end{proof}

A slight modification of the example above yields a lower bound on the memory
requirements for the almost-sure parity objective.
Recall that the parity objective is defined on systems whose states are
labeled by a finite set of colors $C \eqdef \{1,2,\dots,{\it max}\} \subseteq \N$,
where a run is in $\Parity{C}$ iff the highest color that is seen infinitely
often in the run is even.

\corParity*
\begin{proof}
We obtain $\mdp'$ by modifying the MDP $\mdp$ from
\autoref{thm-no-Markov} as follows.
Label all green states in $F$ by color $2$ and the rest by color $1$.
Then modify each red transition to go to its target via a fresh state labeled by color $3$.
Clearly $\mdp'$ is still acyclic and labeled by colors $\{1,2,3\}$.

From the proof of \autoref{thm-no-Markov} (1), under every Markov strategy in
$\mdp$ a.s.\ seeing infinitely many green states (in $F$) implies seeing infinitely
many red transitions.
So in $\mdp'$ every Markov strategy $\zstrat$ a.s.\ either sees color $2$ only finitely often or
color $3$ infinitely often, thus $\probm_{\mdp',\state_0,\zstrat}(\Parity{\{1,2,3\}}) = 0$.

From the proof of \autoref{thm-no-Markov} (2), there is a deterministic 1-bit strategy
$\zstrat$ in $\mdp$ that attains probability $\ge 1/2$ for $\reset(F)$
without taking any red transition
and otherwise a.s.\ takes a red transition.
This property of $\zstrat$ holds not only when starting from $\state_0$ but from every other
state as well. We obtain $\zstrat'$ in $\mdp'$ by
continuing to play $\zstrat$ even after red transitions have been taken.
Under $\zstrat'$ the probability of going through infinitely many red transitions
(and seeing color 3) is $\le (1/2)^\infty =0$, and the probability of seeing
infinitely many states in $F$ (with color $2$) is $1$.
Thus $\probm_{\mdp',\state_0,\zstrat'}(\Parity{\{1,2,3\}}) = 1$.
\end{proof}
\section{The Upper Bound: Full Details}\label{app:upper}
\thmMDPonebitBuchi*
%\begin{restatable}{ourtheorem}{thmMDPonebitBuchi}\label{thm:MDP-one-bit-Buchi}
%For every acyclic countable MDP $\mdp$, initial state $\state$, set of
%states $F$ and $\eps>0$, there exists a deterministic 1-bit $\eps$-optimal
%strategy for $\reset(F)$ from $\state$.
%\end{restatable}
\begin{proof}
  Let $\mdp=\mdptuple$ be an acyclic MDP,
  $\initstates \subseteq \states$ a finite set of initial states and $F \subseteq \states$ a %(possibly infinite)
  set of goal states and $\formula \eqdef \reset(F)$
  denote the \Buchi\ objective w.r.t.\ $F$.
  We prove the claim for finitely branching $\mdp$ first and transfer the result to general MDPs at the end.

  \renewcommand{\optval}[1][\formula]{\valueof{\mdp,#1}{\state}}
  %Let $\optval \eqdef \valueof{\mdp}{\state} =  \sup_{\zstrat}\probm_{\mdp,\state,\zstrat}(\formula)$ be the value of $\state$.
  %Let $\optval \eqdef \valueof{\mdp}{\state} =  \sup_{\zstrat}\probm_{\mdp,\state,\zstrat}(\formula)$ be the

%\smallparg{Outline.}
For every $\eps >0$ and every $\state \in \initstates$ there exists an $\eps$-optimal strategy $\zstrat_\state$
such that
\begin{equation}\label{eq:eps-opt}
\probm_{\mdp,\state,\zstrat_\state}(\formula) \ge \optval-\eps.
\end{equation}
However, the strategies $\zstrat_\state$ might differ from each other and
might use randomization and a large (or even infinite)
amount of memory.
We will construct a single deterministic strategy $\zstrat'$ that uses only 1 bit of
memory such that $\forall_{\state \in \initstates}\, \probm_{\mdp,\state,\zstrat'}(\formula) \ge \optval-2\eps$.
This proves the claim as $\eps$ can be chosen arbitrarily small.

In order to construct $\zstrat'$, we first observe the behavior of the
finitely many $\zstrat_\state$ for $\state \in \initstates$
on an infinite, increasing sequence of finite subsets of $\states$.
Based on this, we define a second stronger objective $\formula'$ with
\begin{equation}\label{eq:prime-implies-normal}
\formula' \subseteq \formula,
\end{equation}
and show that all $\zstrat_\state$ %is $2\eps$-optimal
attain at least $\optval-2\eps$
w.r.t.\ $\formula'$, i.e.,
\begin{equation}\label{eq:observe-orig}
    \forall_{\state \in \initstates}\, \probm_{\mdp,\state,\zstrat_\state}(\formula') \ge \optval-2\eps.
\end{equation}
We construct $\zstrat'$ as a deterministic 1-bit \emph{optimal} strategy w.r.t.\ $\formula'$
from all $\state \in \initstates$ and obtain
\begin{align*}
\probm_{\mdp,\state,\zstrat'}(\formula)\ &
\ge\ \probm_{\mdp,\state,\zstrat'}(\formula') && \text{by \cref{eq:prime-implies-normal}} \\
  & \ge\ \probm_{\mdp,\state,\zstrat_\state}(\formula') && \text{by optimality of $\zstrat'$ for $\formula'$}\\
  & \ge \ \optval-2\eps && \text{by \cref{eq:observe-orig}}.
\end{align*}

\medskip
\noindent
\smallparg{Behavior of $\zstrat$, objective $\formula'$ and properties
  \cref{eq:prime-implies-normal} and \cref{eq:observe-orig}.}
We start with some notation.
Let $\bubble{k}{X}$ be the set of states that can be reached from some state
in the set $X$ within at most $k$ steps.
Since $\mdp$ is finitely branching, $\bubble{k}{X}$ is finite if $X$ is
finite.
Let
$\setfb{\le k}{X} \eqdef\{\play \in \states^{\omega} \mid \exists t \le k.\, X(\play(t))=1\}$
and
$\setfb{\ge k}{X} \eqdef\{\play \in \states^{\omega} \mid \exists t \ge k.\, X(\play(t))=1\}$
denote the property of visiting the set $X$ (at least once) within at most
(resp.\ at least) $k$ steps.
Moreover, let $\eps_i \eqdef \eps \, \cdot\, 2^{-(i+1)}$.

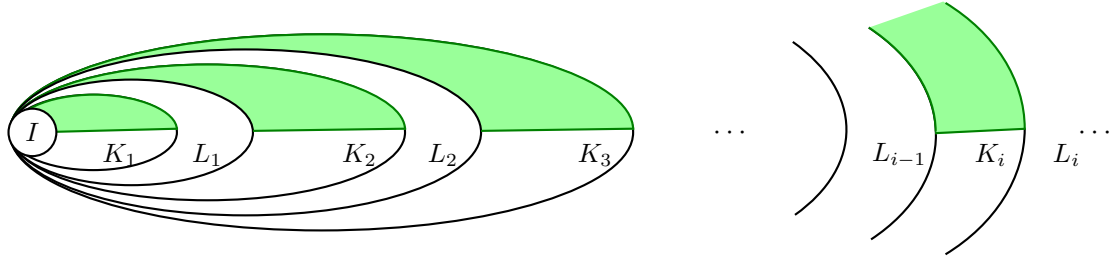
\begin{figure*}[t]
   \begin{center}			
\centering

\begin{tikzpicture}[>=latex',shorten >=1pt,node distance=1.9cm,on grid,auto,
roundnode/.style={circle, draw,minimum size=1.5mm},
squarenode/.style={rectangle, draw,minimum size=2mm}]

\draw (3.8,0) ellipse (4.1cm and 1.3cm);
\draw [green!50!black,fill=green!40!white](-.3,0) arc (180:0:4.1cm and 1.3cm);
\draw [fill=white](2.8,0) ellipse (3.1cm and 1.1cm);
\draw (2.3,0) ellipse (2.6cm and .9cm);
\draw [green!50!black,fill=green!40!white](-.3,0) arc (180:0:2.6cm and .9cm);
\draw [fill=white](1.3,0) ellipse (1.6cm and .7cm);
\draw (.8,0) ellipse (1.1cm and .5cm);
\draw [green!50!black,fill=green!40!white](-.28,0) arc (180:0:1.09cm and .5cm);

\draw (11,1.39) arc (45:-45:3cm and 2cm);
\draw  (13.05,0.05) arc (0:-50:3cm and 2.2cm);

\draw [draw=none,name path=A] (13.05,0.05) arc (0:50:3cm and 2.2cm);
\draw [draw=none,name path=B](11,1.39) arc (45:0:3cm and 2cm);
\tikzfillbetween[of=A and B]{green!40!white};
\draw [green!50!black,name path=A] (13.05,0.05) arc (0:50:3cm and 2.2cm);
\draw [green!50!black,name path=B](11,1.39) arc (45:0:3cm and 2cm);

\draw (10,1.2) arc (40:-40:3cm and 1.8cm);
\draw[-,green!50!black] (0,0)--(1.95,0.05);
\draw[-,green!50!black] (2.9,0.02)--(4.95,0.05);
\draw[-,green!50!black](5.9,0.02)--(7.95,0.05);
\draw[-,green!50!black](11.87,-0.01)--(13.1,0.05);

\node [roundnode,fill=white] (s) at (0,0) {$I$};

\node[draw=none](dot1) at (9.2,0) {{\large $\cdots$}};
\node[draw=none](dot2) at (14,0) {{\large $\cdots$}};

\node[draw=none](K1) at (1.15,-.3)  {$K_1$};
\node[draw=none](L1) at (2.3,-.3)   {$L_1$};
\node[draw=none](K2) at (4.3,-.3)   {$K_2$};
\node[draw=none](L2) at (5.4,-.3)   {$L_2$};
\node[draw=none](K3) at (7.4,-.3)   {$K_3$};
\node[draw=none](Li) at (11.4,-.3)  {$L_{i-1}$};
\node[draw=none](Kii) at (12.6,-.3) {$K_i$};
\node[draw=none](Lii) at (13.6,-.3) {$L_i$};

\end{tikzpicture}
		\end{center}
	\caption{To show the bubble construction. The green region in $K_1$ is $F_1$, and for all $i\geq 2$, the green region in
	$K_{i}\setminus L_{i-1}$ is $F_i$.
}
\label{fig:KLBubbles}
\end{figure*}

The following lemma depends on the assumption that $\mdp$ is acyclic. %and does not hold for general MDPs.

\begin{ourlemma}\label{lem-bubble-extension}
Let $X \subseteq \states$ be a finite set of states and $\eps' > 0$.
\begin{enumerate}
\item
There is $k \in \N$ such that
$
\forall_{\state\in\initstates}\,\probm_{\mdp,\state,\zstrat_\state}(\formula \cap \complementof{\setfb{\le k}{F \setminus X}}) \ \le \ \eps'.
$\label{lem-bubble-extension:ad1}
\item
There is $l \in \N$ such that $\forall_{\state\in\initstates}\,\probm_{\mdp,\state,\zstrat_\state}(\setfb{\ge l}{X}) \ \le \ \eps'$.
\label{lem-bubble-extension:ad2}
\end{enumerate}
\end{ourlemma}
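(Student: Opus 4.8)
The plan is to prove both items by the same two-move template: first exhibit a decreasing family of events whose intersection is empty (this is exactly where acyclicity and the finiteness of $X$ enter), and then invoke continuity of measure from above together with the finiteness of $\initstates$. Fixing a single start state $\state \in \initstates$ and its strategy $\zstrat_\state$, everything takes place in the one probability space $\probm_{\mdp,\state,\zstrat_\state}$; the passage to a uniform threshold is then just a maximum over the finitely many states.

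For item~2, observe that the events $\setfb{\ge l}{X}$ are nested and decreasing in $l$, since a longer waiting time is a stronger requirement, so $\setfb{\ge l+1}{X} \subseteq \setfb{\ge l}{X}$. A run lies in $\bigcap_{l \in \N} \setfb{\ge l}{X}$ exactly when it visits $X$ at arbitrarily large times, i.e.\ infinitely often. But since $\mdp$ is acyclic no state recurs along any run, so each of the finitely many states of $X$ is visited at most once and $X$ is visited at most $\card{X}$ times in total; hence $\bigcap_{l} \setfb{\ge l}{X} = \emptyset$. Continuity of $\probm_{\mdp,\state,\zstrat_\state}$ from above then yields $\lim_{l \to \infty} \probm_{\mdp,\state,\zstrat_\state}(\setfb{\ge l}{X}) = 0$, so some $l_\state$ makes this probability $\le \eps'$, and $l \eqdef \max_{\state \in \initstates} l_\state$ works uniformly.

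For item~1, set $A_k \eqdef \formula \cap \complementof{\setfb{\le k}{F \setminus X}}$. The sets $\complementof{\setfb{\le k}{F \setminus X}}$ shrink as $k$ grows (forbidding a visit to $F \setminus X$ for longer), so the $A_k$ are again decreasing, and $\bigcap_{k \in \N} \complementof{\setfb{\le k}{F \setminus X}}$ is precisely the event of never visiting $F \setminus X$. Thus a run in $\bigcap_k A_k$ visits $F$ infinitely often yet never touches $F \setminus X$, and therefore visits $F \cap X$ infinitely often; as before, acyclicity bounds the number of visits to the finite set $F \cap X$ by $\card{F \cap X}$, so $\bigcap_k A_k = \emptyset$. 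Continuity from above gives $\lim_{k\to\infty}\probm_{\mdp,\state,\zstrat_\state}(A_k)=0$ for each fixed $\state$, and taking the maximum of the corresponding thresholds over the finite set $\initstates$ supplies a single $k$ with $\probm_{\mdp,\state,\zstrat_\state}(A_k) \le \eps'$ for all $\state \in \initstates$.

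The only genuinely load-bearing step is the emptiness of the limiting events, which is precisely where the acyclicity hypothesis (flagged immediately before the lemma) is indispensable: without it a finite set $X$ could be revisited infinitely often and neither intersection would vanish. Everything else is routine measure theory—measurability of the cylinder-type events $\setfb{\le k}{\cdot}$ and $\setfb{\ge l}{\cdot}$ and of $\formula$, and the fact that each $\probm_{\mdp,\state,\zstrat_\state}$ is a probability measure so that continuity from above applies—together with the bookkeeping of turning per-state thresholds into a uniform one over the finite $\initstates$.
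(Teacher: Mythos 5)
Your proposal is correct and follows essentially the same route as the paper's proof: reduce to a single $\state \in \initstates$ and take a maximum at the end, show the limiting intersection of the decreasing events is empty (using acyclicity to bound visits to the finite sets $X$ and $F \cap X$), and conclude by continuity of measures from above. You spell out item~2 and the emptiness arguments in more detail than the paper (which compresses item~1 into the inclusion $\formula \subseteq \setf(F \setminus X)$ and dismisses item~2 as immediate from acyclicity), but the underlying argument is identical.
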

\begin{proof}
It suffices to show the properties for a single $\state,\zstrat_\state$ since
one can take the maximal $k,l$ over the finitely many $\state \in \initstates$.
By acyclicity of $\mdp$, it holds that $\formula ~\subseteq~ \setf(F \setminus X) = \bigcup_{k \in \N} \setfb{\le k}{F \setminus X}$
 and therefore that $\formula \cap \bigcap_{k \in \N} \complementof{ \setfb{\le k}{F \setminus X}} = \emptyset$.
 It follows from the continuity of measures that $\lim_{k \to \infty} \probm_{\mdp,\state,\zstrat_\state}(\formula \cap \complementof{\setfb{\le k}{F \setminus X}}) = 0$.

Item~2 follows directly from the fact that $\mdp$ is acyclic.
\end{proof}

By Lemma~\ref{lem-bubble-extension}(\ref{lem-bubble-extension:ad1}) there is a
$k_1$ such that for $K_1 \eqdef \bubble{k_1}{\initstates}$ and $F_1 \eqdef F \cap K_1$ we
have $\forall_{\state\in\initstates}\,\probm_{\mdp,\state,\zstrat_\state}(\formula \cap \complementof{K_1^* F_1 S^\omega}) \le \eps_1$.
We define the pattern
$$R_1 \eqdef (K_1 \setminus F_1)^*F_1$$
and obtain $\forall_{\state\in\initstates}\,\probm_{\mdp,\state,\zstrat_\state}(\formula \cap \complementof{R_1 S^\omega}) \le \eps_1$.
By Lemma~\ref{lem-bubble-extension}(\ref{lem-bubble-extension:ad2})
there is an $l_1 > k_1$ such that $\forall_{\state\in\initstates}\,\probm_{\mdp,\state,\zstrat_\state}(\setfb{\ge l_1}{K_1}) \le \eps_1$.
Define $L_1 \eqdef \bubble{l_1}{\initstates}$.
By Lemma~\ref{lem-bubble-extension}(\ref{lem-bubble-extension:ad1}) there is a
$k_2 > l_1$ such that for $K_2 \eqdef \bubble{k_2}{\initstates}$ and
$F_2 \eqdef F \cap K_2 \setminus L_1$ we have $\forall_{\state\in\initstates}\,\probm_{\mdp,\state,\zstrat_\state}(\formula \cap \complementof{K_2^* F_2 S^\omega}) \le \eps_2$.
We define the pattern
$$R_2 \eqdef (K_2 \setminus F_2)^*F_2$$
and obtain
$\forall_{\state\in\initstates}\,\probm_{\mdp,\state,\zstrat_\state}(\formula \cap \complementof{R_2
S^\omega}) \le \eps_2$ and, via a union bound,
$\forall_{\state\in\initstates}\,\probm_{\mdp,\state,\zstrat_\state}(\formula \cap \complementof{R_2 (S \setminus K_1)^\omega}) \le \eps_1 + \eps_2$.
By another union bound it follows that $\forall_{\state\in\initstates}\,\probm_{\mdp,\state,\zstrat_\state}(\formula \cap \complementof{R_1 R_2 (S \setminus K_1)^\omega}) \le 2 \eps_1 + \eps_2$.

Proceed inductively for $i = 2, 3, \ldots$ as follows (see \cref{fig:KLBubbles} for an illustration).
By Lemma~\ref{lem-bubble-extension}(\ref{lem-bubble-extension:ad2})
there is an $l_i > k_i$ such that $\forall_{\state\in\initstates}\,\probm_{\mdp,\state,\zstrat_\state}(\setfb{\ge l_i}{K_i}) \le \eps_i$.
Define $L_i \eqdef \bubble{l_i}{\initstates}$.
By Lemma~\ref{lem-bubble-extension}(\ref{lem-bubble-extension:ad1}) there is
$k_{i+1} > l_i$ such that for
$K_{i+1} \eqdef \bubble{k_{i+1}}{\initstates}$ and $F_{i+1} \eqdef F \cap
K_{i+1} \setminus L_i$ we have
$\forall_{\state\in\initstates}\,\probm_{\mdp,\state,\zstrat_\state}(\formula \cap \complementof{(K_{i+1} \setminus F_{i+1})^* F_{i+1} S^\omega}) \le \eps_{i+1}$.
%We define the pattern $R_{i+1} \eqdef (K_{i+1} \setminus (F_{i+1} \cup K_{i-1}))^*F_{i+1}$ and obtain $\probm_{\mdp,\state,\zstrat_\state}(\formula \cap \neg K_{i+1}^* R_{i+1} S^\omega) \le \eps_{i+1}$ and
By a union bound, $\forall_{\state\in\initstates}\,\probm_{\mdp,\state,\zstrat_\state}(\formula \cap \complementof{(K_{i+1} \setminus F_{i+1})^* F_{i+1} (S \setminus K_i)^\omega}) \le \eps_i + \eps_{i+1}$.
By an induction hypothesis we have $\forall_{\state\in\initstates}\,\probm_{\mdp,\state,\zstrat_\state}(\formula \cap \complementof{R_1 R_2 \ldots R_i (S \setminus K_{i-1})^\omega}) \le 2 \eps_1 + \cdots + 2 \eps_{i-1} + \eps_i$.
We define the pattern
$$R_{i+1} \eqdef (K_{i+1} \setminus (F_{i+1} \cup K_{i-1}))^*F_{i+1}.$$
Using that
$
 (K_{i+1} \setminus F_{i+1})^* F_{i+1} (S \setminus K_i)^\omega \ \cap \ R_1 R_2 \ldots R_i (S \setminus K_{i-1})^\omega
\subseteq
R_1 R_2 \ldots R_{i+1} (S \setminus K_i)^\omega,
 $
we get
\begin{equation}\label{eq:eps-bound}
    \forall_{\state\in\initstates}\,\probm_{\mdp,\state,\zstrat_\state}(\formula \cap \complementof{R_1 R_2 \ldots R_{i+1} (S \setminus K_i)^\omega})
\quad\le\quad
2 \eps_1 + \cdots + 2 \eps_{i} + \eps_{i+1}
\quad\le\quad
\eps.
\end{equation}

We now define the Borel objectives
$R_{\le i} \eqdef R_1 R_2 \dots R_i \states^\omega$ and
$\formula' \eqdef \bigcap_{i \in \N} R_{\le i}$.
Since $F_i \cap F_k = \emptyset$ for $i \neq k$ and $\formula'$ implies a visit
to the set $F_i$ for all $i \in \N$, we have
$\formula' \subseteq \formula$ and obtain \cref{eq:prime-implies-normal}.
Moreover, $R_{\le 1} \supseteq R_{\le 2} \supseteq R_{\le 3} \dots$ is an
infinite decreasing sequence of Borel objectives.
For every $\state \in \initstates$ we have
\begin{align*}
\probm_{\mdp,\state,\zstrat_\state}(\formula')\ &
=\ \probm_{\mdp,\state,\zstrat_\state}(\cap_{i=1}^\infty R_{\le i}) && \text{by def.\ of $\formula'$}\\
& = \ \lim_{i \to \infty} \probm_{\mdp,\state,\zstrat_\state}(R_{\le i}) && \text{by cont.\ of measures}\\
& = \ \lim_{i \to \infty} 1-\probm_{\mdp,\state,\zstrat_\state}(\complementof{R_{\le i}}) && \text{by duality}\\
& = \ \lim_{i \to \infty} 1-(\probm_{\mdp,\state,\zstrat_\state}(\complementof{R_{\le i}}\cap
\formula) + \probm_{\mdp,\state,\zstrat_\state}(\complementof{R_{\le i}} \cap \complementof{\formula})) && \text{case split}\\
                                                                                                 & \ge \ \lim_{i \to \infty} 1-(\eps + \probm_{\mdp,\state,\zstrat_\state}(\complementof{R_{\le i}} \cap  \complementof{\formula})) &&
\text{by \cref{eq:eps-bound}}\\
& \ge \ \lim_{i \to \infty} 1-(\eps + \probm_{\mdp,\state,\zstrat_\state}(\complementof{\formula'}\cap \complementof{\formula})) &&
\text{since $\formula' \subseteq R_{\le i}$}\\
& = \ 1-(\eps + 1-\probm_{\mdp,\state,\zstrat_\state}(\formula' \cup \formula)) &&
\text{by duality}\\
& = \ \probm_{\mdp,\state,\zstrat_\state}(\formula) - \eps &&
\text{by \cref{eq:prime-implies-normal}}\\
& \ge \ \optval-2\eps && \text{by \cref{eq:eps-opt}}
\end{align*}
Thus we obtain property \cref{eq:observe-orig}.

\medskip
\noindent
\smallparg{Definition of the 1-bit strategy $\zstrat'$.}
We now define our deterministic 1-bit strategy $\zstrat'$ that is optimal for
objective $\formula'$ from \emph{every} $\state \in \initstates$.
First we define certain ``suffix'' objectives of $\formula'$.
Recall that $R_i = (K_{i} \setminus (F_{i} \cup K_{i-2}))^*F_{i}$.
Let $R_{i,j} \eqdef R_i R_{i+1} \dots R_j \states^\omega$
and $R_{\ge i} \eqdef \bigcap_{j \ge i} R_{i,j}$.
In particular, this means that $\formula' = R_{\ge 1}$.
Every run $w$ from some state $\state \in \initstates$ that satisfies $\formula'$ can be split into parts
before and after the first visit to set $F_i$, i.e.,
$w = w_1\state'w_2$ where $w_1\state' \in R_{\le i}$, $\state' \in F_i$ and
$\state' w_2 \in R_{\ge i+1}$.
(Note also that $w_2$ cannot visit any states in $K_{i-1}$.)
Thus it will be useful to consider the
objectives $R_{\ge i+1}$ for runs that start in states $\state' \in F_i$.
For every state $\state' \in F_i$ we consider its value w.r.t.\ the
objective $R_{\ge i+1}$, i.e.,
$\valueof{\mdp,R_{\ge i+1}}{\state'} \eqdef \sup_{\hat{\zstrat}} \probm_{\mdp,\state',\hat{\zstrat}}(R_{\ge i+1})$.

For every $i \ge 1$ we consider the finite subspace $K_i\setminus K_{i-2}$.
In particular, it contains the sets $F_{i-1}$ and $F_i$.
(For completeness let $K_0 \eqdef F_0 \eqdef \initstates$ and
$K_{-1} \eqdef \emptyset$.)
It is not enough to maximize the probability of reaching the set $F_i$ in each
$K_i$ individually. One also needs to maximize the potential of visiting
further sets $F_{i+1}, F_{i+2}, \dots$ in the indefinite future.
Thus we define the bounded total reward
objective $B_i$ for runs starting in $F_{i-1}$ as follows.
Runs that exit the subspace (either by leaving $K_i$ or by visiting $K_{i-2}$)
before visiting $F_i$ get reward $0$. All other runs must visit $F_i$
eventually (since $\mdp$ is acyclic and the subspace is finite).
When some run reaches the set $F_i$ \emph{for the first time}
in some state $\state'$ then this run gets the reward of $\valueof{\mdp,R_{\ge i+1}}{\state'}$.
We can consider an induced finite MDP $\hat{\mdp}$ with state space $K_i\setminus K_{i-2}$,
plus a sink state (with reward $0$) that is reached immediately after visiting
any state in $F_i$ and whenever one exits the set $K_i\setminus K_{i-2}$.
In $\hat{\mdp}$ one gets a reward of
$\valueof{\mdp,R_{\ge i+1}}{\state'}$ for visiting $\state' \in F_i$ as above.
By \cite[Theorem 7.1.9]{Puterman:book}, there exists a uniform optimal MD-strategy $\zstrat_i$ for this bounded total reward objective on the induced finite MDP $\hat{\mdp}$,
which can be directly applied for objective $B_i$ on the subspace $K_i\setminus K_{i-2}$ in $\mdp$.
(The strategy $\zstrat_i$ is not necessarily unique, but our results
hold regardless of which of them is picked.)

We now define $\zstrat'$ by combining
different MD-strategies $\zstrat_i$, depending on the current state and on the
value of the 1-bit memory.
The intuition is that the strategy $\zstrat'$ has two modes: normal-mode
and next-mode.
In a state $\state' \in K_i \setminus K_{i-1}$, if the memory is $i\pmod 2$
then the strategy is in normal-mode and plays towards reaching $F_i$.
Otherwise, the strategy is in next-mode and plays towards reaching $F_{i+1}$
(normally this happens because $F_i$ has already been seen).

Initially $\zstrat'$ starts in a state $\state \in \initstates$ with the 1-bit memory set to
$1$.
We define the behavior of $\zstrat'$ in a state $\state' \in K_i \setminus K_{i-1}$
for every $i \ge 1$.
\begin{itemize}
\item
If the 1-bit memory is $i \pmod 2$ and $\state' \notin F_i$ then $\zstrat'$ plays like
$\zstrat_i$.
(Intuitively, one plays towards $F_i$, since one has not yet visited it.)
\item
If the 1-bit memory is $i \pmod 2$ and $\state' \in F_i$ then the 1-bit memory
is set to $(i+1) \pmod 2$, and $\zstrat'$ plays like $\zstrat_{i+1}$.
(Intuitively, one records the fact that one has already seen $F_i$ and then
targets the next set $F_{i+1}$.)
\item
If the 1-bit memory is $(i+1) \pmod 2$ then $\zstrat'$ plays like
$\zstrat_{i+1}$.
(Intuitively, one plays towards $F_{i+1}$, since one has already visited $F_i$.)
\end{itemize}
\begin{figure}[t]
   \begin{center}		
\centering

%% used for path colouring
\makeatletter
\tikzset{
  use path for main/.code={%
    \tikz@addmode{%
      \expandafter\pgfsyssoftpath@setcurrentpath\csname tikz@intersect@path@name@#1\endcsname
    }%
  },
  use path for actions/.code={%
    \expandafter\def\expandafter\tikz@preactions\expandafter{\tikz@preactions\expandafter\let\expandafter\tikz@actions@path\csname tikz@intersect@path@name@#1\endcsname}%
  },
  use path/.style={%
    use path for main=#1,
    use path for actions=#1,
  }
}
\makeatother

\begin{tikzpicture}[>=latex',shorten >=1pt,node distance=1.9cm,on grid,auto,
fshade/.style={draw=none,fill=green!40!white,rotate=0},
bitone/.style={red,very thick},
bitzero/.style={blue, very thick},
lost/.style={densely dotted, ->, line width=1.5pt},
roundnode/.style={circle, draw,minimum size=1.5mm},
squarenode/.style={rectangle, draw,minimum size=2mm},
cross/.style={cross out,  fill=none, minimum size=2*(#1-\pgflinewidth), inner sep=0pt, outer sep=0pt}, cross/.default={1pt}]

% K3 bubble
\begin{scope}
\clip (5.8,0) ellipse (6.1cm and 2.3cm);
\draw[fshade] (-1,12) rectangle (12,0);
\end{scope}
\draw (5.8,0) ellipse (6.1cm and 2.3cm);

% L2 bubble
\draw [fill=white](4.2,0) ellipse (4.5cm and 1.9cm);

% K2 bubble
\begin{scope}
\clip (3.5,0) ellipse (3.8cm and 1.5cm);
\draw[fshade] (-1,10) rectangle (10,0);
\end{scope}
\draw (3.5,0) ellipse (3.8cm and 1.5cm);

% L1 bubble
\draw [fill=white](2.1,0) ellipse (2.4cm and 1.1cm);

% K1 bubble
\begin{scope}
\clip (1.2,0) ellipse (1.5cm and .7cm);
\draw[fshade] (-1,2) rectangle (3,0);
\end{scope}
\draw (1.2,0) ellipse (1.5cm and .7cm);

% horizontal
%\draw[-,green!50!black] (-0.3,0)  --(2.75, 0.05);
%\draw[-,green!50!black] (4.5,0.0)--(7.35, 0.0);
%\draw[-,green!50!black](8.7,0.0) --(11.95,0.0);

% Text
\node[draw=none](dot1) at (13,0)  {{\large $\cdots$}};
\node[draw=none](K1) at (2,-.3)   {$K_1$};
%\node[green!80!black](F1) at (2,.3)   {$F_1$};
\node[draw=none](L1) at (4,-.3)   {$L_1$};
\node[draw=none](K2) at (6.8,-.3) {$K_2$};
\node[draw=none](L2) at (8.3,-.3) {$L_2$};
\node[draw=none](K3) at (11.5,-.3){$K_3$};

\coordinate (s) at (0,0);

% First run
\path[name path=firstrun] plot [smooth] coordinates {
    (s)
    (0.75,-.1)
    (1,0.8)
    %(1.2,.7)
    %(1.7,1.0)
    (2.1,.75)
    (3,1.2)
    (3.8,.5)
    (6,2.1)
    (5.95,0.7)
    (8,2.5)
};
\begin{scope}
    \clip (s) rectangle (1,-0.5);
    \draw[bitzero,use path=firstrun];
\end{scope}
\begin{scope}
    \clip(s) rectangle (2.65,1.2);
    \draw[bitone,use path=firstrun];
\end{scope}
\begin{scope}
    \clip(2.65,-1.2) rectangle (5.54,2);
    \draw[bitzero,use path=firstrun];
\end{scope}
\begin{scope}
    \clip(5.54,2.5) rectangle (6.5,1.15);
    \draw[bitone,use path=firstrun];
\end{scope}
\begin{scope}
    \clip (5.9,1.15) -- (5,0) -- (7,0) -- (9,3) -- (8,3) --cycle;
    \draw[->,lost,use path=firstrun];
\end{scope}
\node[lost] at (8.25,2.75){$\pi_3$};

% Second run
\draw[->,name path=secondrun,bitone] plot [smooth] coordinates {
    (s)
    (0.8,-.3)
    (2,.2)
    (3,-0.75)
    (4.5,0.6)
    (4.25,-0.25)
    (6,-2)
    (6,-0.75)
    (9,-1)
    (11,0.5)
    (7.7,.5)
    (10,2)
};
\begin{scope}
    \clip (0,0) -- (2,0) -- (1,-1.5) --cycle;
    \draw[bitzero,use path=secondrun];
\end{scope}
\begin{scope}
\clip(4.3,0.4) -- (4.25,1.5) -- (8,0) -- (10.9,0) -- (8,-2)-- (5,-2.1) -- (4,-0.5) --cycle;
    \draw[bitzero,use path=secondrun];
\end{scope}
\node[bitone] at (10.5,2.25){$\pi_2$};

%% Third run
\path [name path=thirdrun] plot [smooth] coordinates {
    (s)
    (0.7,-.5)
    (1.5,-0.5)
    (0.5,-2)
};
%\draw[lost,->,use path=thirdrun];
\begin{scope}
\clip (1.2,0) ellipse (1.5cm and .7cm);
    \draw[bitzero,use path=thirdrun];
\end{scope}
\begin{scope}
    \clip (0,-0.7) rectangle (2,-2);
    \draw[lost,->,use path=thirdrun];
\end{scope}
\node[lost] at (0.5,-2.25){$\pi_1$};

\node [roundnode,fill=white] at (s) {$I$};
\end{tikzpicture}
		\end{center}
                \caption{Memory updates along runs $\pi_1,\pi_2,\pi_3$,
                    drawn in blue while the memory-bit is one and in red while the bit is zero.
                    Both $\pi_1$ and $\pi_3$ violate $\varphi'$ and are drawn as dotted lines once they do.
}
\label{fig:flipingBit}
\end{figure}

Observe that if a run according to $\zstrat'$ exits some set $K_i$
(and thus enters $K_{i+1}\setminus K_i$) with the bit still set to $i \pmod 2$
(normal-mode) then this run has not visited $F_i$ and thus does not satisfy the objective
$\formula'$. (Or the same has happened earlier for some $j < i$, in which case
also the objective $\formula'$ is violated.)
An example is the run $\pi_1$ in \cref{fig:flipingBit}.

However, if a run according to $\zstrat'$ exits some set $K_i$
(and thus enters $K_{i+1} \setminus K_i$) with the bit set to $(i+1) \pmod 2$
(thus $\zstrat_{i+1}$ in next-mode)
then in the new set $K_{i'} \setminus K_{i'-1}$ with $i'=i+1$ the bit is set to
$i' \pmod 2$ and $\zstrat'$ continues to play like $\zstrat_{i+1}$ in normal-mode.
Even if this run returns (temporarily) to $K_i$ (but not to $K_{i-1}$)
the strategy $\zstrat'$ continues to play like $\zstrat_{i+1}$ in next-mode.
An example is the run $\pi_2$ in \cref{fig:flipingBit}.

Finally, if a run returns to $K_{i-1}$ after having visited $F_i$
then it fails the objective $\formula'$.
An example is the run $\pi_3$ in \cref{fig:flipingBit}.

\medskip
\noindent
\smallparg{The 1-bit strategy $\zstrat'$ is optimal for $\formula'$ from every
$\state \in \initstates$.}
In the following let $\state \in \initstates$ be an arbitrary initial state in $\initstates$.
For any run from $\state$, let $\firstinset{F_i}$ be the first state
$\state'$ in $F_i$ that is visited (if any).
We define a bounded reward objective $B_i'$ for runs starting
at $\state$ as follows. Every run that does not satisfy the objective
$R_{\le i}$ gets assigned reward $0$.
Otherwise, consider a run from $\state$ that satisfies $R_{\le i}$.
When this run reaches the set $F_i$ for the first time
in some state $\state'$ then this run gets a reward of
$\valueof{\mdp,R_{\ge i+1}}{\state'}$. Note that this reward is $\le 1$.

We show that for all $i \in \N$
\begin{equation}\label{eq:Bi-prime}
\valueof{\mdp,\formula'}{\state} = \valueof{\mdp,B_i'}{\state}
\end{equation}
Towards the $\ge$ inequality,
let $\hat{\zstrat}$ be an $\hat{\eps}$-optimal strategy for
$B_i'$ from $\state$.
We define the strategy $\hat{\zstrat}'$ to play like $\hat{\zstrat}$
until a state $\state' \in F_i$ is reached and then to switch to
some $\hat{\eps}$-optimal strategy for objective $R_{\ge i+1}$
from $\state'$.
Every run from $\state$ that satisfies $\formula'$ can be split into parts,
before and after the first visit to the set $F_i$, i.e.,
$\formula' = \{w_1\state'w_2\ |\ w_1\state' \in R_{\le i}, \state' \in F_i,
\state'w_2 \in R_{\ge i+1}\}$.
Therefore we obtain that
$\probm_{\mdp,\state,\hat{\zstrat}'}(\formula') \ge
\expectval_{\mdp,\state,\hat{\zstrat}}(B_i') - \hat{\eps} \ge
\valueof{\mdp,B_i'}{\state} - 2\hat{\eps}$.
Since this holds for every
$\hat{\eps} >0$, we obtain
$\valueof{\mdp,\formula'}{\state} \ge \valueof{\mdp,B_i'}{\state}$.

Towards the $\le$ inequality,
let $\hat{\zstrat}$ be any strategy for
$\formula'$ from $\state$. We have
$\probm_{\mdp,\state,\hat{\zstrat}}(\formula')
\le
\sum_{\state' \in F_i}
\probm_{\mdp,\state,\hat{\zstrat}}(R_{\le i} \cap \firstinset{F_i}=\state')
\cdot \valueof{\mdp,R_{\ge i+1}}{\state'}
=
\expectval_{\mdp,\state,\hat{\zstrat}}(B_i')
$.
Thus $\valueof{\mdp,\formula'}{\state} \le \valueof{\mdp,B_i'}{\state}$.
Together we obtain \cref{eq:Bi-prime}.

For all $i \in \N$ and every state $\state' \in F_i$ we show that
\begin{equation}\label{eq:Ri-eq-Bi}
\valueof{\mdp,R_{\ge i+1}}{\state'} = \valueof{\mdp,B_{i+1}}{\state'}
\end{equation}
Towards the $\ge$ inequality,
let $\hat{\zstrat}$ be an $\hat{\eps}$-optimal strategy for
$B_{i+1}$ from $\state' \in F_i$.
We define the strategy $\hat{\zstrat}'$ to play like $\hat{\zstrat}$
until a state $\state'' \in F_{i+1}$ is reached and then to switch to
some $\hat{\eps}$-optimal strategy for objective $R_{\ge i+2}$
from $\state''$. We have that
$\probm_{\mdp,\state',\hat{\zstrat}'}(R_{\ge i+1}) \ge
\expectval_{\mdp,\state',\hat{\zstrat}}(B_{i+1}) - \hat{\eps} \ge
\valueof{\mdp,B_{i+1}}{\state} - 2\hat{\eps}$.
Since this holds for every $\hat{\eps} >0$, we obtain
$\valueof{\mdp,R_{\ge i+1}}{\state'} \ge \valueof{\mdp,B_{i+1}}{\state'}$.

Towards the $\le$ inequality,
let $\hat{\zstrat}$ be any strategy for
$R_{\ge i+1}$ from $\state' \in F_i$.
We have
\begin{align*}
\probm_{\mdp,\state',\hat{\zstrat}}(R_{\ge i+1})
~&\le
\sum_{\state'' \in F_{i+1}}
\probm_{\mdp,\state',\hat{\zstrat}}(R_{i+1}\states^\omega \cap \firstinset{F_{i+1}}=\state'')
\cdot \valueof{\mdp,R_{\ge i+2}}{\state''}\\
 &=
\expectval_{\mdp,\state',\hat{\zstrat}}(B_{i+1}).
\end{align*}
Thus $\valueof{\mdp,R_{\ge i+1}}{\state'} \le \valueof{\mdp,B_{i+1}}{\state'}$.
Together we obtain \cref{eq:Ri-eq-Bi}.

We show, by induction on $i$, that $\zstrat'$ is optimal for $B_i'$ for
all $i \in \N$ from start state $\state$, i.e.,
\begin{equation}\label{eq:opt-Bi-prime}
\expectval_{\mdp,\state,\zstrat'}(B_i') = \valueof{\mdp,B_i'}{\state}
\end{equation}
In the base case of $i=1$ we have that $B_1' = B_1$. The strategy $\zstrat'$ plays
$\zstrat_1$ until reaching $F_1$, which is optimal for objective $B_1$ and
thus optimal for $B_1'$.
For the induction step we assume (IH) that $\zstrat'$ is optimal for $B_i'$.
\begin{align*}
\valueof{\mdp,B_{i+1}'}{\state}\ & =\ \valueof{\mdp,B_i'}{\state} && \text{by \cref{eq:Bi-prime}}\\
                           & =\ \expectval_{\mdp,\state,\zstrat'}(B_i') && \text{by (IH)}\\
& =\ \sum_{\state' \in F_i}
\probm_{\mdp,\state,\zstrat'}(R_{\le i} \cap \firstinset{F_i}=\state')
\cdot \valueof{\mdp,R_{\ge i+1}}{\state'} && \text{by def.\ of $B_i'$}\\
& =\ \sum_{\state' \in F_i}
\probm_{\mdp,\state,\zstrat'}(R_{\le i} \cap \firstinset{F_i}=\state')
\cdot \valueof{\mdp,B_{i+1}}{\state'} && \text{by \cref{eq:Ri-eq-Bi}}\\
& =\ \sum_{\state' \in F_i}
\probm_{\mdp,\state,\zstrat'}(R_{\le i} \cap \firstinset{F_i}=\state')
\cdot \expectval_{\mdp,\state',\zstrat_{i+1}}(B_{i+1}) && \text{opt.\ of $\zstrat_{i+1}$ for $B_{i+1}$}\\
& =\ \expectval_{\mdp,\state,\zstrat'}(B_{i+1}') && \text{by def.\ of
  $\zstrat'$ and $B_{i+1}'$}
\end{align*}
So $\zstrat'$ attains the value $\valueof{\mdp,B_{i+1}'}{\state}$ of the
objective $B_{i+1}'$ from $\state$ and is optimal. Thus \cref{eq:opt-Bi-prime}.

Now we show that $\zstrat'$ performs well on the objectives $R_{\le i}$ for
all $i \in \N$.
\begin{equation}\label{eq:1-bit-val}
\probm_{\mdp,\state,\zstrat'}(R_{\le i}) \ge \valueof{\mdp,\formula'}{\state}
\end{equation}
We have
\begin{align*}
\probm_{\mdp,\state,\zstrat'}(R_{\le i})\ &
\ge\ \expectval_{\mdp,\state,\zstrat'}(B_i') && \text{since $B_i'$ gives rewards
  $0$ for runs $\notin R_{\le i}$ and $\le 1$ otherwise} \\
  & =\ \valueof{\mdp,B_i'}{\state} && \text{by \cref{eq:opt-Bi-prime}}\\
  & = \  \valueof{\mdp,\formula'}{\state} && \text{by \cref{eq:Bi-prime}}
\end{align*}
So we get \cref{eq:1-bit-val}. Now we are ready to prove the optimality of $\zstrat'$ for $\formula'$ from $\state$.
\begin{align*}
\probm_{\mdp,\state,\zstrat'}(\formula')\ &
  =\ \probm_{\mdp,\state,\zstrat'}(\cap_{i \in \N} R_{\le i}) && \text{by def.\ of $\formula'$}\\
  & = \  \lim_{i \to \infty}\probm_{\mdp,\state,\zstrat'}(R_{\le i}) &&
\text{by continuity of measures from above}\\
  & \ge \ \lim_{i \to \infty}\valueof{\mdp,\formula'}{\state} && \text{by \cref{eq:1-bit-val}}\\
  & = \ \valueof{\mdp,\formula'}{\state}
\end{align*}
This concludes the proof that $\zstrat'$ is optimal for $\formula'$ and hence
$2\eps$-optimal for $\formula$ for every initial state $\state \in \initstates$.

\medskip
\noindent
\smallparg{From finitely to infinitely branching MDPs.}
Let $\mdp$ be an infinitely branching acyclic MDP
with a finite set of initial states $\initstates$ and $\eps >0$. We derive a
finitely branching acyclic MDP $\mdp'$ with sufficiently similar behavior.
Every controlled state $x$ with infinite branching
$x \to y_i$ for all $i \in \N$ is replaced by a gadget
$x \to z_1, z_i \to z_{i+1}, z_i \to y_i$ for all $i \in \N$
with fresh controlled states $z_i$ (cf.\ \cref{fig:infmem-Buchi}).
Infinitely branching random states with $x \step{p_i}{} y_i$ for all $i \in \N$
are replaced by a gadget
$x \step{1}{} z_1, z_i \step{1-p_i'} z_{i+1}, z_i \step{p_i'} y_i$ for all
$i \in \N$, with fresh random states $z_i$ and suitably adjusted probabilities
$p_i'$ to ensure that the gadget
is left at state $y_i$ with probability $p_i$, i.e.,
$p_i' = p_i/(\prod_{j=1}^{i-1}(1-p_j'))$.

We apply the above result for
finitely branching acyclic MDPs to $\mdp'$
and obtain a 1-bit deterministic $\eps$-optimal strategy $\zstrat'$
for \Buchi\ from all states $\state \in \initstates$.
We construct a 1-bit deterministic $\eps$-optimal strategy $\zstrat''$
for $\mdp$ as follows.
Consider some state $x$ that is infinitely branching in $\mdp$
and its associated gadget in $\mdp'$.
Whenever a run in $\mdp'$ according to $\zstrat'$ reaches $x$ with some
memory value $\alpha \in \{0,1\}$ there exist
values $p_i$ for the probability that the gadget is left at state
$y_i$. Let $p \eqdef 1-\sum_{i\in \N} p_i$ be the probability that the gadget is
never left. (If $x$ is controlled then only one $p_i$ (or $p$) is nonzero,
since $\zstrat'$ is deterministic. If $x$ is random then $p=0$.)
Since $\zstrat'$ is deterministic, the memory updates are deterministic,
and thus there are values $\alpha_i' \in \{0,1\}$ such that whenever
the gadget is left at state $y_i$ the memory will be $\alpha_i'$.
We now define the behavior of the 1-bit deterministic strategy $\zstrat''$ at state $x$ with
memory $\alpha$ in $\mdp$.

If $x$ is controlled and $p\neq 1$ then $\zstrat''$ picks the successor state $y_i$ where
$p_i=1$ and sets the memory to $\alpha_i'$.
If $p=1$ then any run according to $\zstrat'$ that enters the gadget
does not satisfy the objective. Thus $\zstrat''$ performs at least as
well in $\mdp$ regardless of its choice, e.g., pick successor $y_1$ and
$\alpha' = \alpha$.

If $x$ is random then $p=0$ and the successor is chosen according to the defined
distribution (which is the same in $\mdp$ and $\mdp'$)
and $\zstrat''$ can only update its memory.
Whenever the successor $y_i$ is chosen, $\zstrat''$ updates the memory to
$\alpha_i'$.

In states that are not infinitely branching in $\mdp$, $\zstrat''$ does
exactly the same in $\mdp$ as $\zstrat'$ in $\mdp'$.

Since the gadgets do not intersect $F$, $\zstrat''$ performs
at least as well in $\mdp$ as $\zstrat'$ in $\mdp'$ and is thus
$\eps$-optimal from every $\state \in \initstates$.
\end{proof}

Now we show our upper bound on the strategy complexity of \Buchi\ for general
MDPs.

\thmMarkovplusone*
%\begin{restatable}{ourtheorem}{thm:Markov-plus-one}
%For every countable MDP $\mdp$, initial state $\state$, set of
%states $F$ and $\eps>0$, there exists an $\eps$-optimal
%deterministic 1-bit Markov strategy for $\reset(F)$ from $\state$.
%\end{restatable}
\begin{proof}
Let $\mdp=\mdptuple$ be a countable MDP with a finite set of initial
states $\initstates$ and $F \subseteq \states$ the
set of goal states.
We derive an acyclic MDP $\mdp' =
\tuple{\states',\zstates',\rstates',\transition',\probp'}$
by encoding a step-counter into the states.
Let $\states' = \states \times \N_0$, $\zstates' = \zstates \times \N_0$,
$\rstates' = \rstates \times \N_0$,
$\transition' = \{((x,n),(y,n+1))\ |\ (x,y) \in \transition\}$
and
$\probp'((x,n))((y,n+1)) = \probp(x)(y)$
for all $n \in \N_0$.
Let $\initstates' := \{(\state,0) \,|\, \state \in \initstates\}$
be the finite set of initial states of $\mdp'$ and $F' = F \times \N_0$ the set of goal states.

For every $\eps >0$, by \cref{thm:MDP-one-bit-Buchi}, there
exists a 1-bit deterministic strategy $\zstrat'$ for
$\reset(F)$ in $\mdp'$ that is $\eps$-optimal from every state $(\state,0) \in
\initstates'$.

We now define the deterministic 1-bit Markov strategy $\zstrat$
for $\reset(F)$ that is $\eps$-optimal from every $\state \in \initstates$ in $\mdp$.
It uses a step-counter (initially $0$) and $1$ extra bit of memory.
For any controlled state $\state'$, step-counter value $n$ and memory $\alpha
\in \{0,1\}$, consider the behavior of $\zstrat'$ at state $(\state',n)$
and memory $\alpha$. Let $(\state'',n+1)$ be the chosen successor state and
$\alpha'\in \{0,1\}$ the new memory content. Then $\zstrat$ chooses the successor
$\state''$, updates the memory to $\alpha'$ and increments the step-counter to
$n+1$.
\end{proof}
\end{document}